\numberwithin{equation}{section}
\theoremstyle{definition}
\newtheorem{thm}{Theorem}[section]
\newtheorem{theorem}[thm]{Theorem}
\newtheorem{lemma}[thm]{Lemma}
\newtheorem{corollary}[thm]{Corollary}
\newtheorem{proposition}[thm]{Proposition}
\newtheorem{remark}[thm]{Remark}
\newtheorem{definition}[thm]{Definition}
\newtheorem{example}[thm]{Example}
\newtheorem{defn-thm}[thm]{Definition-Theorem}
\newtheorem{conjecture}[thm]{Conjecture}
\newenvironment{observe}{\noindent\textcolor{blue}{\textit{Observation}}.}{\hfill \textcolor{blue}{$\blacktriangleleft$}\par}
\newtheorem*{theorem*}{Theorem}
\newtheorem*{proposition*}{Proposition}
\definecolor{pistachio}{rgb}{0.58, 0.77, 0.45}
\definecolor{eggshell}{rgb}{0.94, 0.92, 0.84}
\newcommand{\sE}{{\mathcal E}}
\newcommand{\sK}{{\mathcal K}}
\newcommand{\sN}{{\mathcal N}}
\newcommand{\sO}{{\mathcal O}}
\newcommand{\sT}{{\mathcal T}}
\newcommand{\sX}{{\mathcal X}}
\newcommand{\cE}{{\mathcal E}}
\newcommand{\cN}{{\mathcal N}}
\newcommand{\cO}{{\mathcal O}}
\newcommand{\cP}{{\mathcal P}}
\newcommand{\g}{{\mathfrak g}}
\newcommand{\gt}{\mathfrak{t}}
\newcommand{\bW}{\mathsf{W}}
\newcommand{\GL}{\operatorname{GL}}
\newcommand{\SL}{\operatorname{SL}}
\newcommand{\Sp}{\operatorname{Sp}}
\newcommand{\SO}{\operatorname{SO}}
\newcommand{\Ker}{\operatorname{Ker}}
\renewcommand{\Im}{\operatorname{Im}}
\newcommand{\Tr}{\operatorname{Tr}}
\newcommand{\Spec}{\operatorname{Spec}}
\newcommand{\RT}{\operatorname{RT}}
\newcommand{\Gal}{\operatorname{Gal}}
\newcommand{\Gr}{\operatorname{Gr}}
\newcommand{\codim}{{\operatorname{codim}}}
\newcommand{\gr}{{\operatorname{gr}}}
\newcommand{\End}{\operatorname{End}}
\newcommand{\Ad}{\operatorname{Ad}}
\newcommand{\btheorem}{\begin{theorem}}
\newcommand{\etheorem}{\end{theorem}}
\newcommand{\bproposition}{\begin{proposition}}
\newcommand{\eproposition}{\end{proposition}}
\newcommand{\bdefinition}{\begin{definition}}
\newcommand{\edefinition}{\end{definition}}
\newcommand{\bcorollary}{\begin{corollary}}
\newcommand{\ecorollary}{\end{corollary}}
\newcommand{\bproof}{\begin{proof}}
\newcommand{\eproof}{\end{proof}}
\newcommand{\bremark}{\begin{remark}}
\newcommand{\eremark}{\end{remark}}
\newcommand{\eexample}{\end{example}}
\newcommand{\bexample}{\begin{example}}
\newcommand{\elemma}{\end{lemma}}
\newcommand{\blemma}{\begin{lemma}}
\newcommand{\bobserve}{\begin{observe}}
\newcommand{\eobserve}{\end{observe}}
\newcommand{\ord}{\tx{ord}}
\renewcommand{\bar}{\overline}
\renewcommand{\phi}{\varphi}
\newcommand{\ee}{\end{eqnarray*}}
\newcommand{\be}{\begin{eqnarray*}}
\newcommand{\beq}{\begin{equation}}
\newcommand{\eeq}{\end{equation}}
\newcommand{\bd}{\begin{enumerate}}
\newcommand{\ed}{\end{enumerate}}
\newcommand{\bti}{\begin{tikzcd}}
\newcommand{\eti}{\end{tikzcd}}
\renewcommand{\tilde}{\widetilde}
\newcommand{\NP}{\operatorname{NP}}
\renewcommand{\bf}[1]{\mathbf{#1}}
\newcommand{\tx}[1]{\text{#1}}
\newcommand{\rk}{\operatorname{rk}}
\title[Minimal reduction type in classical cases]{Minimal reduction type in classical cases}
\author{Bin Wang}
\address{Hetao Institute of Mathematics and Interdisciplinary Sciences, Shenzhen, China}
\email{matph92@gmail.com}
\author{Xueqing Wen}
\address{Chongqing University of Technology, No. 69, Hongguang Avenue, Banan District, Chongqing, 400054, China.}
\email{wenxq@cqut.edu.cn}
\author{Yaoxiong Wen}
\address{Zhejiang University, 866 Yuhangtang Rd, Hangzhou, 310058, P. R. China}
\email{y.x.wen.math@gmail.com}
\date{}
\begin{document}

\begin{abstract}
    We prove Yun’s minimal reduction conjecture for all classical groups. More precisely, for any topologically nilpotent regular semisimple element $\gamma$, we show that the associated minimal reduction set $\RT_{\min}(\gamma)$ consists of a single nilpotent orbit. This result confirms and extends Yun’s earlier work in types~A and~C, and resolves the remaining cases in types~B and~D. Moreover, we provide an explicit and effective procedure for determining $\RT_{\min}(\gamma)$.
\end{abstract}

\maketitle
\tableofcontents

\section{Introduction}

Let $G$ be a connected reductive group over $\mathbb{C}$ with Lie algebra $\g$. Fix a Borel subgroup $B \subset G$ containing a maximal torus $T$, and let $\bW = \bW(G, T)$ be the Weyl group, and let $\underline{\bW}$ denote the set of its conjugacy classes. Let $\cN$ denote the set of nilpotent elements in $\g$, and $\underline{\cN}$ the set of nilpotent orbits. A typical nilpotent orbit is denoted by $\bf{O}$.

\subsection{Kazhdan--Lusztig map and the minimal reduction map}

Let $L\g=\g(\!(t)\!)$ be the loop algebra of $\g$, and let $L^+\g=\g[\![t]\!]$ be its integral part. We denote by $L^{\heartsuit}\g \subset L\g$ the set of topologically nilpotent regular semisimple elements.

\subsubsection{Kazhdan--Lusztig map}

In \cite{KL88}, Kazhdan and Lusztig introduced a map
\[
    \mathrm{KL}: \underline{\cN} \longrightarrow \underline{\bW},
\]
defined as follows. 

Given a nilpotent orbit $\bf{O}$ and a representative $\gamma_0 \in \bf{O}$, choose a generic lift $\gamma \in \gamma_0 + tL^+\g$ such that $\gamma \in L^{\heartsuit}\g$. 
The centralizer $LG_\gamma$ then forms a maximal torus in the loop group $LG$. 
Kazhdan and Lusztig showed that the rational conjugacy class of $LG_\gamma$ is independent of the choice of generic lift. 
Moreover, rational conjugacy classes of maximal tori in $LG$ are in bijection with conjugacy classes in the Weyl group $\bW$, yielding the map $\mathrm{KL}$.

It has long been conjectured that the Kazhdan–Lusztig map is injective. This map was computed by Spaltenstein for classical groups \cite{Spa88,Spa90a} and for most exceptional groups \cite{Spa90b}. 

More recently, Yun \cite{Yun21} established the injectivity of the Kazhdan–Lusztig map via a different approach, by constructing the \emph{minimal reduction map}, which is a section of the Kazhdan–Lusztig map, thereby confirming Spaltenstein’s predictions \cite{Spa90b} for the remaining exceptional cases.

\subsubsection{Minimal reduction map}

The \emph{minimal reduction map}
\[
    \RT_{\min}: \underline{\bW} \longrightarrow \underline{\cN},
\]
defined using affine Springer fibers.

Let $\Gr := LG/L^+G$ be the affine Grassmannian, which is an ind-projective ind-scheme over $\mathbb{C}$. For $\gamma \in L^{\heartsuit}\g$, the associated affine Springer fiber is
\[
    \Gr_{\gamma}:=\{gL^+G\in \Gr \mid g^{-1}\gamma g\in L^+\g\}.
\]
There is a natural evaluation morphism
\begin{align*}
    ev_{\gamma}: \Gr_\gamma &\longrightarrow \underline{\cN}, \\
    gL^+G &\longmapsto g^{-1}\gamma g \pmod t .
\end{align*}
For a nilpotent orbit $\bf{O}$, let $\Gr_{\gamma,\bf{O}}$ denote the fiber of $ev_{\gamma}$ over $\bf{O}$, which is a locally closed subscheme of $\Gr_\gamma$.

\begin{definition}[Minimal reduction]
    Let $\RT(\gamma)$ be the set of nilpotent orbits $\bf{O}$ such that $\Gr_{\gamma,\bf{O}}$ is non-empty. We define $\RT_{\min}(\gamma)$ to be the set of minimal elements of $\RT(\gamma)$ with respect to the closure order.
\end{definition}

By \cite[\S4, Corollary~1]{KL88}, the regular nilpotent orbit always lies in $\RT(\gamma)$, hence $\RT_{\min}(\gamma)$ is non-empty. 
However, since the closure order on nilpotent orbits is only a partial order, the minimal reduction need not be unique a priori.

For a conjugacy class $[w]\in \underline{\bW}$, let $(L^{\heartsuit}\g)_{[w]}$ denote the subset of elements of type $[w]$ under the Kazhdan--Lusztig correspondence. Yun showed that there exists a Zariski open subset of \emph{shallow elements}, denoted by $(L^{\heartsuit}\g)^{sh}_{[w]}$, such that for any $\gamma$ in this subset, $\RT_{\min}(\gamma)$ always consists of a unique common nilpotent orbit. This orbit, denoted by $\RT_{\min}([w])$, defines a map
\[
    \RT_{\min}: \underline{\bW} \longrightarrow \underline{\cN},
\]
which Yun proved to be a section of the Kazhdan--Lusztig map. 

Meanwhile, Yun~\cite{Yun21} also shows that the minimal reduction map coincides with a map defined by Lusztig in \cite{Lus11}, whose definition does not involve affine Springer fiber. Most recently, Shan, Yan and Zhao~\cite{SYZ25} defined two cyclotomic level maps which are compatible with Lusztig's map and hence also the minimal reduction map.

For more general elements, Yun proposed the following conjecture.

\begin{conjecture}[Yun {\cite[Conjecture~1.15]{Yun21}}]\label{conj of Yun}
    For any $\gamma\in L^{\heartsuit}\g$, the minimal reduction set $\RT_{\min}(\gamma)$ consists of a single nilpotent orbit.
\end{conjecture}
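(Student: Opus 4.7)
My plan is to work in the explicit matrix models for classical groups. Realizing $\g$ as endomorphisms of a vector space $V$ preserving a non-degenerate bilinear form, points of the affine Springer fiber $\Gr_\gamma$ correspond to $\gamma$-stable lattices $\Lambda \subset V \otimes_\CC \CC(\!(t)\!)$ that are self-dual with respect to the form, and the evaluation map records the nilpotent conjugacy class of $\gamma$ acting on the quotient $\Lambda/t\Lambda$. Decomposing $V \otimes_\CC \CC(\!(t)\!)$ under $\gamma$ by eigen--Puiseux-series, with the form pairing opposite eigenvalues, yields a concrete combinatorial parametrization of $\gamma$-stable self-dual lattices by Newton-polygon-type data attached to each eigencomponent.

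From this data I would extract, by an explicit algorithm, a canonical candidate orbit $\bf{O}(\gamma) \in \underline{\cN}$---this is the effective procedure advertised in the abstract. The proof then splits into three steps. First, exhibit a concrete $\gamma$-stable self-dual lattice $\Lambda$ realizing $\bf{O}(\gamma)$ on $\Lambda/t\Lambda$, showing $\bf{O}(\gamma) \in \RT(\gamma)$. Second, check $\bf{O}(\gamma)$ is minimal in $\RT(\gamma)$, using Yun's theorem in the shallow case together with a semicontinuity argument as $\gamma$ degenerates within its Kazhdan--Lusztig type $[w] = \mathrm{KL}(\gamma)$ from a shallow base point: the minimal orbit can only move down in the closure order under specialization, and the candidate $\bf{O}(\gamma)$ is designed to track this descent. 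Third, show uniqueness of the minimum, i.e.\ that $\RT_{\min}(\gamma) = \{\bf{O}(\gamma)\}$.

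For uniqueness I would play a lattice-modification game: given any $\gamma$-stable self-dual $\Lambda$ whose associated orbit differs from $\bf{O}(\gamma)$, produce a canonical elementary modification $\Lambda \rightsquigarrow \Lambda'$, still $\gamma$-stable and self-dual, whose orbit is strictly smaller in the closure order; iterating forces every orbit in $\RT(\gamma)$ to dominate $\bf{O}(\gamma)$. The main obstacle lies in making these modifications compatible with the parity constraints peculiar to types~B and~D---partitions in which parts of a prescribed parity must have even multiplicity---and, in type~D, in separately tracking the two $\SO$-conjugacy classes into which very-even orbits split, which naive modifications can swap. For types~A and~C this lattice game is relatively transparent and recovers Yun's earlier results; for types~B and~D, matching the output of the algorithm against Spaltenstein's explicit formulas for the Kazhdan--Lusztig map \cite{Spa88,Spa90a} provides an independent consistency check that both pins down the correct combinatorial procedure and confirms injectivity of $\mathrm{KL}$ along the way.
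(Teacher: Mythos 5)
Your overall geometric setup (lattices in $V\otimes\CC(\!(t)\!)$, Newton-polygon/eigencomponent data, and an explicit candidate orbit built factor-by-factor) is in the same spirit as the paper. However, your Steps 2 and 3 diverge essentially from what the paper does, and as written they contain a real gap.

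For minimality and uniqueness the paper does \emph{not} degenerate from a shallow base point nor play a lattice-modification game. Instead it leverages a hard constraint of Spaltenstein (\cite[Prop.~6.4]{Spa88}, quoted as Proposition~\ref{relation for type BD1*} in the paper): if $\bf{d}$ is the partition of any reduction of $\gamma$, then for every repeated even part $d$ the associated graded polynomial $\gr_{\bf{d},d}(\chi_\gamma)$ must be a \emph{square} in $\CC[X]$. Call such $\bf{d}$ admissible. Combined with the trivial observation that any type~B/D reduction is in particular a type~A reduction and hence dominates $\bf{d}_A=\RT^{\mathrm A}_{\min}(\gamma)$, uniqueness reduces to a purely combinatorial statement: among all \emph{admissible} type~B/D partitions dominating $\bf{d}_A$ there is a unique minimum $\bf{d}_B$ or $\bf{d}_D$, and this minimum is actually realized by an explicit lattice. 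This is what Propositions~\ref{prop.type D minimal reduction}, \ref{prop.type B minimal reduction}, \ref{prop:D}, \ref{prop:B} accomplish. Your proposal does not identify this admissibility constraint, and without it the set of type~B/D partitions dominating $\bf{d}_A$ genuinely fails to have a unique minimal element, so a purely lattice-modification argument has no hope of terminating at a canonical orbit: it would have to re-prove Spaltenstein's result implicitly, which is a nontrivial task you have not sketched.

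The semicontinuity step also does not work as stated. The paper emphasizes that the minimal reduction is \emph{not} determined by the Newton polygon alone; whether a given admissible partition occurs depends on whether certain graded polynomials $\gr_{\bf{d},d}(\chi_\gamma)$ are squares, which can change discontinuously along a family inside a fixed Kazhdan--Lusztig type $[w]$ (this is exactly the dichotomy in cases (D1.2.1) vs.\ (D1.2.2)). So ``the minimal orbit can only move down under specialization'' would, even if true, not pin it down; you would need to control which admissible partition is selected, and that requires the same square/non-square analysis. Finally, for very even orbits in type~D you correctly flag the issue but give no mechanism; the paper resolves it by a separate argument (the lemma preceding Corollary~\ref{very even}, following \cite{BK18} and \cite{WWW25}) showing the two $\SO_{2n}$-orbits over a very-even partition map to distinct components of a double cover of $L^+\mathfrak{c}_{\bf{d}}$, so $\Ad$ cannot swap them. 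In short: your decomposition and candidate-orbit ideas align with the paper, but your proposed uniqueness and minimality arguments are not the paper's and, absent the admissibility constraint, would not close.
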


Yun proved this conjecture for types~A,~C, and~$G_2$ in \cite{Yun21}, using the notion of skeletons introduced therein (see Definition~\ref{def:skeleton}). He also proves this for general reductive groups conditionally on the conjugacy classes of Weyl groups. In particular, he proves this for shallow elements for all classical groups and $G_2$.

\subsection{Main results}

In this paper, we confirm Yun’s conjecture for all classical groups, using a method different from that of \cite{Yun21}.

\begin{theorem}\label{thm.main}
    Let $G$ be a classical group. For any $\gamma \in L^\heartsuit \g$, the minimal reduction set $\RT_{\min}(\gamma)$ consists of a single nilpotent orbit. 
    Moreover, this orbit can be determined explicitly from the characteristic polynomial $\chi(\gamma)\in L^+\mathfrak{c}$.
\end{theorem}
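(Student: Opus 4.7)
The plan is to embed each classical group $G$ into some $\GL_N$ via the standard representation and translate the affine Springer fiber into the language of $\gamma$-stable lattices, extracting the minimal reduction orbit from the Newton polygon of $\chi(\gamma)$. Under $\g \hookrightarrow \mathfrak{gl}_N$, a point of $\Gr_\gamma$ corresponds to a lattice $\Lambda \subset \CC((t))^N$ with $\gamma\Lambda \subset \Lambda$, self-dual under the defining bilinear form in types B, C, D; the evaluation $ev_\gamma(\Lambda)$ is the $G$-nilpotent orbit of the induced endomorphism $\bar\gamma$ on $\Lambda/t\Lambda$. The goal is then to produce from $\chi(\gamma) \in L^+\mathfrak{c}$ an explicit partition $\lambda(\gamma)$ of $N$, verify that it satisfies the parity constraints characterizing nilpotent orbits of $G$, and prove that the associated orbit $\mathbf{O}(\gamma)$ is the unique element of $\RT_{\min}(\gamma)$.

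First I would describe $\lambda(\gamma)$ via a Newton-polygon-style algorithm applied to the type-specific invariants of $\gamma$: the even-degree coefficients of the characteristic polynomial for $\Sp_{2n}$ and $\SO_{2n+1}$, together with a Pfaffian in $\SO_{2n}$. The symmetry of these invariants should force $\lambda(\gamma)$ to satisfy the correct parity conditions (even parts with even multiplicity for type B, odd parts with even multiplicity for types C, D). To prove the containment $\mathbf{O}(\gamma) \in \RT(\gamma)$, I would construct an explicit $\gamma$-stable self-dual lattice $\Lambda_0$ realizing Jordan type $\lambda(\gamma)$, by choosing a cyclic decomposition of $\CC((t))^N$ under the action of $\gamma$ and rescaling generators of each cyclic summand by powers of $t$ dictated by the slopes; self-duality is arranged by pairing summands into hyperbolic pairs or self-dual blocks according to the root-theoretic structure of the type.

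The core difficulty is minimality: for any other orbit $\mathbf{O}' \in \RT(\gamma)$ realized by some $\gamma$-stable self-dual lattice $\Lambda$ with Jordan type $\lambda(\Lambda)$, we need $\lambda(\gamma) \leq \lambda(\Lambda)$ in the closure order. The foundation should be a self-dual refinement of the classical inequality relating Jordan type to the Newton polygon --- essentially a Springer--Steinberg-type estimate in type A, strengthened in types B, C, D by the fact that self-duality restricts which partitions can arise from invariant lattices in the standard module. The main obstacle I foresee is in type D, where the ``very even'' partitions split into two distinct $\SO_{2n}$-orbits: distinguishing them requires isolating a supplementary $\F_2$-valued invariant --- morally a spinor-type component invariant --- from the Pfaffian part of $\chi(\gamma)$, and matching this invariant across both the explicit construction of $\Lambda_0$ and any hypothetical minimal competitor. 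This component-level bookkeeping, together with proving that the self-dual refinement of the dominance-order inequality remains sharp in the presence of the orthogonal form, will be the most delicate step of the argument.
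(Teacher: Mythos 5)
Your proposal reproduces the paper's overall architecture for types~A and~C (translate to lattices, read off balanced partitions from the Newton polygon, construct an explicit self-dual lattice realizing the candidate orbit), but in types~B and~D there is a genuine gap at the very point where the paper has to do its real work. You assert that ``the symmetry of these invariants should force $\lambda(\gamma)$ to satisfy the correct parity conditions.'' This is false: the balanced partition $\mathbf d_A$ read off from the Newton polygon of $\chi(\gamma)$ generically \emph{violates} the parity conditions defining $\cP_1(2n)$ and $\cP_1(2n+1)$, which is exactly why types~B and~D are the hard cases. Worse, the Newton polygon alone does not determine the minimal reduction in these types. The paper isolates a dichotomy (case (D1.2)) in which $\mathbf d_A$ is already a type-D partition but whether it \emph{can} occur as a reduction depends on whether a certain associated graded polynomial $\operatorname{gr}_{\mathbf d_A,k_1}(\chi_\gamma)$, built from specific coefficients of $\chi_\gamma$ and not just their valuations, is a square in $\CC[X]$. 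Without this finer invariant you cannot even \emph{name} the candidate orbit $\mathbf O(\gamma)$, let alone prove it is the answer.

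The second missing idea is the minimality argument. A ``self-dual refinement of the Springer--Steinberg dominance inequality'' cannot by itself give uniqueness: the set of type-B/D partitions dominating $\mathbf d_A$ need not have a unique minimal element, since closure order is only a partial order and distinct incomparable type-B/D partitions can dominate $\mathbf d_A$. The paper resolves this by restricting to \emph{admissible} partitions (Definition~\ref{def:admissible}, a square condition on the graded polynomials going back to Spaltenstein's Proposition~\ref{relation for type BD1*}), proving that every reduction type of $\gamma$ must be admissible, and then proving by a delicate combinatorial argument (Lemmas~\ref{lem:Case 1}, \ref{lem:Case 2}, Propositions~\ref{prop.type D minimal reduction}, \ref{prop.type B minimal reduction}) that there is a \emph{unique} minimal admissible partition dominating $\mathbf d_A$. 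Finally, the existence step is also much harder than a cyclic-decomposition-plus-rescaling argument: when several Newton slopes have odd-order constant terms, one cannot equip each factor with a compatible nondegenerate symmetric form separately; the paper pairs up consecutive ``odd'' blocks and simultaneously modifies the operator and the pairing through the $Q$-space machinery of \S\ref{single slope}--\S\ref{subsec:modify pairing}. Your spinor-type $\F_2$-invariant idea for the very even case is morally correct and close to Lemma~\ref{very even}'s Pfaffian argument, but the earlier gaps mean the proposal does not assemble into a proof.
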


We now briefly outline our construction. For the proof of uniqueness and existence, especially for the cases of type B and D, please refer to the beginning of Section~\ref{type BD}, where we also explain our strategy more explicitly. We first introduce the following notion.

\begin{definition}\label{def:balanced}
    Given positive integers $(n,m)$, write $n=mk+l$ with $0\leq l<m$. 
    The \emph{$m$-balanced partition} of $n$ is defined to be
    \[
        [(k+1)^{l},\, k^{m-l}],
    \]
    where the notation $a^b$ means that the part $a$ appears with multiplicity $b$.
\end{definition}

For any $\gamma\in L^\heartsuit \g$, we first decompose its characteristic polynomial $\chi(\gamma)$ according to its Newton polygon. 
For each factor $f_i$, let $n_i$ be its degree and $m_i$ the order of its constant term. 
We associate to $f_i$ the $m_i$-balanced partition of $n_i$. 
The collection of these balanced partitions is denoted by $\RT^{\mathrm{A}}_{\min}(\gamma)$.

For groups of type~A and~C, the partition $\RT^{\mathrm{A}}_{\min}(\gamma)$ coincides with the minimal reduction of $\gamma$. In particular, when $\gamma\in L^\heartsuit \mathfrak{sp}_{2n}$, the polynomial $\chi(\gamma)$ is self-dual, and hence $\RT^{\mathrm{A}}_{\min}(\gamma)$ automatically defines a partition of type~C.

For groups of type~B or~D, the situation is more subtle. 
In general, $\RT^{\mathrm{A}}_{\min}(\gamma)$ does not define a partition of type~B or~D. 
Moreover, among all partitions that are larger than $\RT^{\mathrm{A}}_{\min}(\gamma)$ for type~B or~D, minimal elements need not be unique \emph{a priori}.

Following a result of Spaltenstein (see Proposition~\ref{relation for type BD1*}), we introduce the notion of an \emph{admissible partition} associated with $\gamma$. 
We then prove that among all admissible partitions of type~B or~D that dominate $\RT^{\mathrm{A}}_{\min}(\gamma)$, there exists a unique minimal one. 
This partition is shown to be precisely the partition of the minimal reduction of $\gamma$.

The uniqueness of minimal reductions has several important consequences.

Affine Springer fibers have played a central role in representation theory and geometry since the seminal work of Kazhdan--Lusztig~\cite{KL88}. 
Bezrukavnikov~\cite{Bez96} proved the dimension formula for affine Springer fibers—conjectured in~\cite{KL88}—by studying the dimension of $\Gr_{\gamma,\bf{O}^{\mathrm{reg}}}$. 
For various nilpotent orbits $\bf{O}$, the strata $\Gr_{\gamma,\bf{O}}$ form a stratification of the affine Springer fiber $\Gr_{\gamma}$, with $\Gr_{\gamma,\bf{O}^{\mathrm{reg}}}$ being the open dense stratum.

It is therefore natural to ask what the closed strata are. Conjecture~\ref{conj of Yun} predicts that there is a unique closed stratum corresponding to the minimal reduction.

Theorem~\ref{thm.main} has further applications. 
By \cite[Lemma~11.6]{Yun21}, if Conjecture~\ref{conj of Yun} holds, then $L^\heartsuit \mathfrak{c}$ admits a stratification whose strata can be regarded as affine analogs of special pieces in the nilpotent cone $\cN$.

In \cite{JY23}, Jakob and Yun studied the isoclinic Deligne--Simpson problem and reduced it to Conjecture~\ref{conj of Yun} for homogeneous elements in $L^\heartsuit \g$. 
They verified the conjecture in the classical cases for homogeneous elements and explicitly determined the minimal reductions. Our construction in Theorem~\ref{thm.main} is compatible with and recovers their results.

\subsection{Plan of the paper}
This paper is organized as follows.

In Section~\ref{Preli}, we recall basic facts on reduction maps, nilpotent orbits, and Newton polygons. Subsection~\ref{linear algebra} is devoted to linear algebraic preliminaries, concerning vector spaces equipped with non-degenerate pairings and (anti-)self-adjoint operators. The results established there will be used repeatedly in the analysis of types~B and~D.

In Section~\ref{type AC}, we treat the cases of types~A and~C. We first construct the minimal reduction following the approach of \cite{Yun21}. As a warm-up for the subsequent section, we then present an alternative construction based on Newton polygons, whose key step is to conjugate a rational normal form to one with minimal reduction. In the case of type~C, this construction further requires the explicit construction of a non-degenerate pairing.

Section~\ref{type BD} is devoted to the most delicate cases, namely types~B and~D. In Subsection~\ref{find the reduction}, we introduce the notion of admissible partitions and prove that, among all admissible partitions dominating the type~A minimal reduction, there is a unique candidate which, if it occurs as a reduction, must be minimal. In Subsections~\ref{single slope} and~\ref{subsec:modify pairing}, we show that this partition indeed arises as the reduction of $\gamma$. Combined with Corollary~\ref{very even}, this completes the proof of the uniqueness of minimal reductions in types~B and~D.

\vspace{0.3cm}
\paragraph{\textbf{Notations}}
Throughout this paper, we adopt the following notations.
\begin{itemize}
    \item The symbol $\equiv$ always means congruence modulo $2$, i.e.\ $\equiv \pmod{2}$.
    \item We denote by $\mathcal{O}=\mathbb{C}[\![t]\!]$ the ring of formal power series in one variable $t$, and by $\sK$ its field of fractions.
    \item For an element $a\in \mathcal{O}$, we write $(a)_i$ for the coefficient of $t^i$ in $a$.
\end{itemize}

\section*{Acknowledgments}

We would like to thank Professor Yun for many helpful discussions. 
Part of this work was completed during B.~Wang’s postdoctoral appointment at the Chinese University of Hong Kong, and he would like to thank Michael McBreen for his support. 
Part of this work was completed during a visit of X.~Wen to KIAS, and he gratefully acknowledges the warm hospitality of KIAS. 
Y.~Wen would like to thank Woonam Lim for his support during the author’s stay at Yonsei University, where this work was carried out.

\section{Preliminary}\label{Preli}

\subsection{Minimal Reduction Map}
We denote the centralizer of $\gamma$ in $LG$ by $LG_{\gamma}$ which can also be treated as a torus over $\Spec\sK$.
\begin{lemma}
    The loop centralizer $LG_{\gamma}$ acts on $\Gr_{\gamma,\bf{O}}$ for any $\bf{O}\in\RT(\gamma)$.
\end{lemma}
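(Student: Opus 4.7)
The plan is to verify directly that the left-multiplication action of $LG$ on $\Gr$ restricts to an action of the loop centralizer $LG_{\gamma}$ that preserves the defining condition of $\Gr_{\gamma}$ as well as the value of the evaluation morphism $ev_{\gamma}$. This reduces the lemma to a short manipulation in the loop group, with no serious obstacle; the only care needed is to keep track of how conjugation by an element of $LG_{\gamma}$ interacts with the condition $g^{-1}\gamma g \in L^{+}\g$.

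First, I would recall that $LG$ acts on $\Gr = LG/L^{+}G$ from the left, and that $LG_{\gamma} = \{h \in LG \mid h\gamma = \gamma h\}$. Given $h \in LG_{\gamma}$ and a point $gL^{+}G \in \Gr_{\gamma}$, I would compute
\[
    (hg)^{-1}\gamma(hg) \;=\; g^{-1}\bigl(h^{-1}\gamma h\bigr)g \;=\; g^{-1}\gamma g,
\]
using that $h$ commutes with $\gamma$. Since $g^{-1}\gamma g \in L^{+}\g$ by the definition of $\Gr_{\gamma}$, the same holds for $(hg)^{-1}\gamma(hg)$, so $hgL^{+}G \in \Gr_{\gamma}$. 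This shows that the $LG$-action on $\Gr$ restricts to an action of $LG_{\gamma}$ on $\Gr_{\gamma}$.

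Next, I would check that this action preserves each fiber $\Gr_{\gamma,\bf{O}}$ of the evaluation map. The computation above shows
\[
    ev_{\gamma}(hgL^{+}G) \;=\; (hg)^{-1}\gamma(hg) \bmod t \;=\; g^{-1}\gamma g \bmod t \;=\; ev_{\gamma}(gL^{+}G),
\]
so $ev_{\gamma}$ is constant along $LG_{\gamma}$-orbits. In particular, for any nilpotent orbit $\bf{O} \in \RT(\gamma)$, the subscheme $\Gr_{\gamma,\bf{O}} = ev_{\gamma}^{-1}(\bf{O})$ is $LG_{\gamma}$-stable. Since all steps are direct identities in $LG$, there is no real obstacle; the statement is essentially a formal consequence of the definitions of $\Gr_{\gamma}$, $ev_{\gamma}$, and the centralizer condition.
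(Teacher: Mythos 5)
Your proof is correct and takes essentially the same approach as the paper: left multiplication by $h\in LG_{\gamma}$ preserves both the condition defining $\Gr_{\gamma}$ and the value of $ev_{\gamma}$, because $(hg)^{-1}\gamma(hg)=g^{-1}\gamma g$. You simply spell out the computation that the paper records as a commutative diagram.
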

\begin{proof}
    There is a natural action of $LG_{\gamma}$ on $\Gr_{\gamma}$:
    \[
    gL^+G\mapsto hgL^+G,\; h\in LG_{\gamma}
    \]
    by left multiplication. Notice that we have the following commutative diagram:
    \[
    \begin{tikzcd}
         gL^+G\ar[d,"h\cdot"]\ar[r,"ev_{\gamma}"]& \frac{\sN}{G}\ar[d,"Id"]\\
         hgL^+G \ar[r,"ev_{\gamma}"]&\frac{\sN}{G}
    \end{tikzcd}
    \]
    since $h\in LG_{\gamma}.$
\end{proof}
\begin{lemma}\label{lem:congjuation invriance of RT}
    If $\gamma,\gamma'$ are in the same conjugacy class of $LG$, then $\RT_{\min}(\gamma)=\RT_{\min}(\gamma')$.
\end{lemma}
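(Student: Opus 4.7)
The plan is to exhibit an explicit isomorphism between the affine Springer fibers of $\gamma$ and $\gamma'$ that matches up their evaluation maps to $\underline{\cN}$. Write $\gamma' = g\gamma g^{-1}$ for some $g \in LG$. I would define a morphism
\[
    \phi_g: \Gr_\gamma \longrightarrow \Gr_{\gamma'}, \qquad hL^+G \longmapsto ghL^+G.
\]
To see $\phi_g$ lands in $\Gr_{\gamma'}$, observe that if $h^{-1}\gamma h \in L^+\g$, then
\[
    (gh)^{-1}\gamma'(gh) \;=\; (gh)^{-1}(g\gamma g^{-1})(gh) \;=\; h^{-1}\gamma h \;\in\; L^+\g.
\]
The inverse morphism $\phi_{g^{-1}}$ is constructed in the same way, so $\phi_g$ is an isomorphism of ind-schemes.

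Next I would verify that $\phi_g$ intertwines the two evaluation maps. The identity above in fact shows
\[
    ev_{\gamma'}(\phi_g(hL^+G)) \;=\; (gh)^{-1}\gamma'(gh)\bmod t \;=\; h^{-1}\gamma h \bmod t \;=\; ev_\gamma(hL^+G)
\]
as elements of $\cN$, and a fortiori as orbits in $\underline{\cN}$. Consequently $\phi_g$ restricts to an isomorphism $\Gr_{\gamma,\bf{O}} \xrightarrow{\sim} \Gr_{\gamma',\bf{O}}$ for every nilpotent orbit $\bf{O}$.

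In particular $\Gr_{\gamma,\bf{O}}$ is non-empty if and only if $\Gr_{\gamma',\bf{O}}$ is non-empty, so $\RT(\gamma) = \RT(\gamma')$. Since the closure order on $\underline{\cN}$ depends only on $\underline{\cN}$ itself, the minimal elements of the two sets coincide, yielding $\RT_{\min}(\gamma) = \RT_{\min}(\gamma')$. There is no real obstacle here; the only minor point worth noting is that although $g$ need not lie in $L^+G$, conjugation still preserves the $G$-orbit after reduction modulo $t$ because the two expressions $(gh)^{-1}\gamma'(gh)$ and $h^{-1}\gamma h$ are equal on the nose in $L^+\g$, not merely up to $L^+G$-conjugacy.
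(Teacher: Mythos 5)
Your proof is correct and follows essentially the same route as the paper: both exhibit the isomorphism $hL^+G \mapsto ghL^+G$ (where $\gamma' = g\gamma g^{-1}$) and observe that it intertwines the evaluation maps, hence restricts to isomorphisms on each stratum $\Gr_{\gamma,\bf{O}}\xrightarrow{\sim}\Gr_{\gamma',\bf{O}}$. Your closing remark is a worthwhile clarification the paper leaves implicit.
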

\begin{proof}
    Choose $h\in LG$ such that $h^{-1}\gamma'h=\gamma$, then $g^{-1}\gamma g=(gh)^{-1}\gamma' gh\in L^+\g$. We have an isomorphism:
    \[
    \Gr_{\gamma}\rightarrow \Gr_{\gamma'}, \;\;\;\;
        gL^+G\mapsto hgL^+G 
    \]       
    which induces an isomorphism between $\Gr_{\gamma,\bf{O}}\xrightarrow[h\cdot]{\cong}\Gr_{\gamma',\bf{O}}$ for each nilpotent orbit $\bf{O}$.
\end{proof}
\begin{lemma}\label{lem:single conj class}
    For $\gamma,\gamma'\in L^{\heartsuit}\g$, if $\chi(\gamma)=\chi(\gamma')$, then they are conjugate by $LG$, i.e., over $\sK$.
\end{lemma}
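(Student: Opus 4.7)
The plan is to identify the obstruction to rational conjugation as a Galois cohomology class and then exploit the small cohomological dimension of $\sK$. Since $\gamma$ and $\gamma'$ are both regular semisimple and $\chi(\gamma)=\chi(\gamma')$, they lie in the same $G$-orbit over the algebraic closure $\bar\sK$: indeed, the adjoint quotient $\g\to\g/\!\!/G=\mathfrak{c}$ induces a bijection between regular semisimple $G(\bar\sK)$-orbits and the $\bar\sK$-points of the regular locus $\mathfrak{c}^{\mathrm{reg}}$. Pick any $h_0\in G(\bar\sK)$ with $h_0^{-1}\gamma h_0=\gamma'$; the task is to improve $h_0$ to a $\sK$-rational element of $G$.

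Next I would package the ambiguity as a torsor. Consider the transporter scheme
\[
    P := \{h\in G \mid h^{-1}\gamma h=\gamma'\},
\]
which is defined over $\sK$, nonempty by the previous paragraph, and naturally a torsor under $T:=Z_G(\gamma)$. Because $\gamma$ is regular semisimple, $T$ is a maximal torus of $G$ defined over $\sK$ (in general non-split, with splitting type recorded by the Kazhdan--Lusztig class $[w]\in\underline{\bW}$). The existence of an $\sK$-rational conjugator is therefore equivalent to the vanishing of the class $[P]\in H^1(\sK,T)$.

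The decisive input is the cohomological dimension of the base field: since $\sK=\mathbb{C}(\!(t)\!)$ has algebraically closed residue field $\mathbb{C}$, it has cohomological dimension $1$. By a theorem of Steinberg (see Serre, \emph{Cohomologie Galoisienne}, III.2), $H^1(K,H)=0$ for every connected linear algebraic group $H$ over a perfect field $K$ of cohomological dimension $\le 1$. Applied to the connected group $T$, this gives $H^1(\sK,T)=0$, hence $[P]=0$, and therefore some $h\in G(\sK)=LG$ conjugates $\gamma$ to $\gamma'$.

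The main (minor) subtlety is that $T$ is typically not a split torus, so one really needs Steinberg's vanishing in its full generality rather than just Hilbert~90 for $\mathbb{G}_m$. For classical groups one could alternatively give a concrete normal-form proof that avoids Galois cohomology entirely: in type A any regular element is cyclic and hence conjugate over any field to the companion matrix of $\chi(\gamma)$, and analogous (though more intricate) canonical-form arguments treat types B, C, and D.
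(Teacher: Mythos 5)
Your argument is essentially identical to the paper's: both descend a $\bar\sK$-conjugator to $\sK$ by observing that the obstruction lives in $H^1(\sK, Z_G(\gamma))$, which vanishes because $Z_G(\gamma)$ is a torus and $\sK=\mathbb{C}(\!(t)\!)$ has cohomological dimension $\le 1$. The only cosmetic differences are that you package the obstruction as the class of the transporter torsor rather than writing the explicit cocycle $\sigma\mapsto g^{-1}\sigma(g)$, and you cite Steinberg's theorem in its general form for connected groups, whereas the paper invokes the vanishing for tori directly.
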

\begin{proof}
    Since $\chi(\gamma)=\chi(\gamma')$ and they are regular semisimple, there exists $g\in G(\bar{\sK})$ such that $\gamma'=g\gamma g^{-1}$. For any $\sigma\in \Gal(\bar{\sK}/\sK)$, we have:
    \[
    \sigma(g)\gamma \sigma(g)^{-1}=g\gamma g^{-1}
    \]
    Hence $g^{-1}\sigma(g)\in LG_{\gamma}(\bar{\sK})$. Here we treat $LG_{\gamma}$ as a torus over $\sK$.
    
    We can define a 1-cocyle:
    \[
    \sigma\mapsto g^{-1}\sigma(g)\in LG_{\gamma}(\bar{\sK}).
    \]
    Since $\sK$ is a complete Henselian field with residue field algebraically closed and $LG_{\gamma}$ is a torus over $\sK$, $H^1(\Gal(\bar{\sK}/\sK), LG_{\gamma})=0$. Hence, $\exists h\in LG_{\gamma}(\bar{\sK})$ such that $g^{-1}\sigma (g)=h^{-1}\sigma(h)$ for any $\sigma\in \Gal(\bar{\sK}/\sK)$. Notice that $\gamma'=(gh^{-1})\gamma(hg^{-1})$ and $gh^{-1}\in LG_{\gamma}(\sK)$. The lemma follows.
\end{proof}
Combining with Lemma~\ref{lem:congjuation invriance of RT}, we have the following corollary.
\begin{corollary}\label{cor:only depends on char}
   The reduction type $\RT(\gamma)$ only depends on $\chi(\gamma)$. 
\end{corollary}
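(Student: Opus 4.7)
The plan is to deduce the corollary by directly chaining the two preceding lemmas. Given $\gamma,\gamma'\in L^{\heartsuit}\g$ with $\chi(\gamma)=\chi(\gamma')$, Lemma~\ref{lem:single conj class} produces an element $h\in LG(\sK)$ such that $\gamma'=h\gamma h^{-1}$. Thus it suffices to show that two $LG$-conjugate elements of $L^{\heartsuit}\g$ have the same reduction type $\RT$, not merely the same $\RT_{\min}$.

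This stronger statement is already contained in the argument for Lemma~\ref{lem:congjuation invriance of RT}. First I would observe that left multiplication by $h$ defines an isomorphism of ind-schemes $\Gr_{\gamma}\xrightarrow{\sim}\Gr_{\gamma'}$, $gL^+G\mapsto hgL^+G$, since $(hg)^{-1}\gamma'(hg)=g^{-1}\gamma g\in L^+\g$ iff $g^{-1}\gamma g\in L^+\g$. Moreover the evaluation morphisms intertwine: $ev_{\gamma'}(hgL^+G)=g^{-1}\gamma g\bmod t=ev_{\gamma}(gL^+G)$. Consequently the isomorphism restricts, for every nilpotent orbit $\mathbf{O}$, to an isomorphism $\Gr_{\gamma,\mathbf{O}}\xrightarrow{\sim}\Gr_{\gamma',\mathbf{O}}$. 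In particular $\Gr_{\gamma,\mathbf{O}}\neq\emptyset$ if and only if $\Gr_{\gamma',\mathbf{O}}\neq\emptyset$, giving $\RT(\gamma)=\RT(\gamma')$.

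Combining the two steps yields $\RT(\gamma)=\RT(\gamma')$ whenever $\chi(\gamma)=\chi(\gamma')$, so $\RT(\gamma)$ depends only on the characteristic polynomial. There is no real obstacle here, as the result is purely formal once Lemmas~\ref{lem:congjuation invriance of RT} and~\ref{lem:single conj class} are in hand; the only small point worth making explicit is that the proof of Lemma~\ref{lem:congjuation invriance of RT} actually matches the strata orbit-by-orbit, so it upgrades from a statement about $\RT_{\min}$ to one about $\RT$.
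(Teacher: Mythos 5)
Your proof is correct and follows exactly the route the paper intends: combine Lemma~\ref{lem:single conj class} (same characteristic polynomial implies $LG$-conjugate) with the proof of Lemma~\ref{lem:congjuation invriance of RT} (conjugation gives stratum-by-stratum isomorphisms $\Gr_{\gamma,\mathbf{O}}\cong\Gr_{\gamma',\mathbf{O}}$, hence equal $\RT$). Your observation that the statement of Lemma~\ref{lem:congjuation invriance of RT} mentions only $\RT_{\min}$ while its proof in fact yields the stronger equality $\RT(\gamma)=\RT(\gamma')$ is a worthwhile clarification, since the corollary genuinely needs $\RT$ and not merely $\RT_{\min}$.
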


\subsection{Nilpotent orbits}

Nilpotent orbits of classical Lie algebras are parametrized by partitions with suitable parity conditions; see \cite{CM93} for a detailed account. 
Throughout this paper, we freely identify a nilpotent orbit with the partition labeling it.

Let $\cP(N)$ denote the set of partitions $\bf{d}=[d_1\geq d_2\geq \cdots]$ of $N$. 
For $\varepsilon=\pm1$, define
\[
\cP_\varepsilon(N)
=
\Bigl\{\bf{d}\in \cP(N)\ \Big|\ 
\#\{j\mid d_j=i\}\equiv 0 \ \text{for all } i \text{ with } (-1)^i=\varepsilon
\Bigr\}.
\]
It is well known that nilpotent orbits in $\mathfrak{sl}_N$, $\mathfrak{so}_{2n+1}$, and $\mathfrak{sp}_{2n}$ are in bijection with $\cP(N)$, $\cP_1(2n+1)$, and $\cP_{-1}(2n)$, respectively. 

For $\mathfrak{so}_{2n}$, the parametrization is given by $\cP_1(2n)$, except that partitions with all parts even correspond to two distinct $\SO_{2n}$-orbits (the so-called \emph{very even} orbits), denoted by $\bf{O}^\mathrm{I} \sqcup \bf{O}^\mathrm{II}$. 

The dominance order on partitions,
\[
\bf{d}\ge \bf{f}
\quad\Longleftrightarrow\quad
\sum_{j=1}^k d_j \ge \sum_{j=1}^k f_j \quad \text{for all } k,
\]
induces a partial order on nilpotent orbits, which agrees with the closure order: 
$\overline{\bf{O}}_{\bf{d}}=\bigsqcup_{\bf{f}\le \bf{d}}\bf{O}_{\bf{f}}$.

\subsection{Newton Polygons}

For $G=\GL_n,\SL_n,\Sp_{2n},\SO_{2n}$, and $\gamma\in L^{\heartsuit}\g$, we put:
\[
    \chi_{\gamma}(\lambda)=\det(\lambda-\gamma)\in\sO[\lambda]
\]
But in the case $G=\SO_{2n}$, we keep track with the pfaffian of $\gamma$ rather than its determinant.

For $G=\SO_{2n+1}$, we put:
\[
    \chi_{\gamma}(\lambda)=\det(\lambda-\gamma)/\lambda\in \sO[\lambda].
\]
We denote by $\NP(\gamma)$ the Newton polygon of $\chi(\lambda)$. For example, when  
\[
 \chi(\lambda)=\lambda^{n}+a_1\lambda^{n-1}+\cdots +a_{n-1}\lambda +a_n
\]
then $\NP(\gamma)$ is defined as the convex hull of $\{(\ord(a_{i}),n-i)\}_{i=0}^{n}$ where $a_0=1$ and $\ord (0)=+\infty$. 

\begin{lemma}\label{decomposition wrt NP}
    If the Newton polygon $\NP(\gamma)$ consists of edges with slopes $\mu_1> \cdots >\mu_r$, then we have a decomposition: $$\chi_{\gamma}(\lambda)=\prod_{i=1}^rf_i(\lambda)$$ where the Newton polygon of $f_i(\lambda)$ consists only one edge with slope $\mu_i$.
\end{lemma}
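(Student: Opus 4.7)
The plan is to invoke the classical Newton polygon factorization over the complete discretely valued field $\sK=\CC(\!(t)\!)$. Extending $\ord$ uniquely to the algebraic closure $\bar\sK$, I would factor $\chi_{\gamma}(\lambda)=\prod_{j}(\lambda-\alpha_j)$ into linear factors and group the roots according to their valuation. By the standard correspondence between slopes of the Newton polygon and root valuations, each slope $\mu_i$ contributes horizontal length equal to $\#\{j:\ord(\alpha_j)=\mu_i\}$, so the natural candidate factors are
\[
    f_i(\lambda):=\prod_{\ord(\alpha_j)=\mu_i}(\lambda-\alpha_j),\qquad i=1,\dots,r,
\]
each of degree equal to the horizontal length of the $i$-th edge of $\NP(\gamma)$.

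Next I would verify that $f_i\in\sO[\lambda]$. The Galois group $\Gal(\bar\sK/\sK)$ preserves $\ord$, hence permutes the roots within each valuation class; consequently each $f_i$ is Galois-stable and therefore defined over $\sK$. Since $\gamma\in L^{\heartsuit}\g$ is topologically nilpotent, every $\alpha_j$ has strictly positive valuation, so the coefficients of $f_i$, being elementary symmetric polynomials in roots of positive valuation, have non-negative valuation themselves and thus lie in $\sO$. Combined with monicity this gives $f_i\in\sO[\lambda]$, and by construction $\chi_{\gamma}=\prod_{i=1}^{r} f_i$.

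It remains to observe that the Newton polygon of $f_i$ consists of a single edge of slope $\mu_i$: every root of $f_i$ has valuation exactly $\mu_i$, so the only slope appearing is $\mu_i$ and the edge has horizontal length $\deg f_i$. The argument is entirely standard and presents no serious obstacle; the one point that deserves a brief justification is the identification of Newton polygon slopes with root valuations in $\bar\sK$, which follows from the ultrametric inequality applied to the expansion of $\prod_{j}(\lambda-\alpha_j)$ and the observation that the minimal valuation achieved by a given elementary symmetric function is attained uniquely when there are no ties among the $\ord(\alpha_j)$ grouped.
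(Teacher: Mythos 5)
The paper states Lemma~\ref{decomposition wrt NP} without proof, treating it as a standard fact about Newton polygons over the complete discretely valued field $\sK=\CC(\!(t)\!)$. Your proof is correct and gives exactly the expected classical argument: extend the valuation to $\bar\sK$, group roots by valuation, observe that each group is Galois-stable (since $\Gal(\bar\sK/\sK)$ preserves the unique extension of the valuation), so each $f_i\in\sK[\lambda]$, and conclude integrality from the fact that all root valuations are positive for topologically nilpotent $\gamma$. Nothing to add.
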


\subsection{Pairings and Lie algebras in type B, C and D}\label{subsec:preparation modify pairing}
In this subsection, we aim to clarify our explicit construction of minimal reduction. In this paper, we regard $L^+\mathfrak{sp}_{N}$ and $L^+\mathfrak{so}_{N}$ as subalgebras of $L^+\mathfrak{gl}_N$, by fixing a non-degenerate (skew-)symmetric pairing on $\mathcal{O}^N$. Thus elements in $L^+\mathfrak{sp}_N$ or $L^+\mathfrak{so}_N$ can be regarded as elements in $\mathfrak{gl}_N$ which are anti-self-adjoint with respect to the pairing. 

However, for the convenience of constructing the minimal reduction, we need to conjugate the element $\gamma$ with elements not necessarily in $L^+\operatorname{Sp}_N$ or $L^+\operatorname{SO}_N$. In other words, we change the pairing to find a relatively simple matrix expression. We now explain why this still works. First notice that:
\begin{lemma}
    All the nondegenerate symmetric (resp. skew-symmetric) pairing on $\sO^N$ are equivalent, i.e., up to a choice of a basis.
\end{lemma}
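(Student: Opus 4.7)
The plan is to normalize any non-degenerate pairing on $\mathcal{O}^N$ by constructing an orthogonal (resp.\ symplectic) basis via induction on $N$, using two structural features of $\mathcal{O}=\mathbb{C}[\![t]\!]$: it is complete local with residue field $\mathbb{C}$, and every unit in $\mathcal{O}^\times$ admits a square root (Hensel's lemma applied to $x^2-u$, which splits mod $t$ since $\mathbb{C}$ is algebraically closed).

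For the symmetric case, let $\langle\,,\,\rangle$ be a non-degenerate symmetric pairing on $\mathcal{O}^N$. Non-degeneracy means the Gram matrix lies in $\GL_N(\mathcal{O})$, so its reduction mod $t$ defines a non-degenerate symmetric form on $\mathbb{C}^N$. Hence there is $\bar v\in\mathbb{C}^N$ with $\langle\bar v,\bar v\rangle\neq 0$; any lift $v\in\mathcal{O}^N$ satisfies $\langle v,v\rangle\in\mathcal{O}^\times$. Dividing $v$ by a square root of $\langle v,v\rangle$, I may assume $\langle v,v\rangle=1$. Then the projector $P(w)=w-\langle w,v\rangle v$ is well defined and splits $\mathcal{O}^N=\mathcal{O}\cdot v\oplus v^\perp$ as $\mathcal{O}$-modules, with $v^\perp$ free of rank $N-1$ and inheriting a non-degenerate symmetric pairing. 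Induction on $N$ produces an orthonormal basis, hence the Gram matrix of $\langle\,,\,\rangle$ equals the identity matrix in suitable coordinates. Any two such forms are therefore equivalent.

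For the skew-symmetric case, non-degeneracy forces $N$ even (the determinant of a skew form in odd rank vanishes). Pick any $v_1\in\mathcal{O}^N$ whose reduction is non-zero; then $\langle v_1,\cdot\rangle\colon\mathcal{O}^N\to\mathcal{O}$ is surjective onto $\mathcal{O}$, since its reduction mod $t$ is non-zero on $\mathbb{C}^N$ by non-degeneracy of the reduced form. Choose $v_2$ with $\langle v_1,v_2\rangle=1$. The map $w\mapsto\bigl(\langle v_1,w\rangle,\,-\langle v_2,w\rangle\bigr)$ retracts $\mathcal{O}^N$ onto the span of $(v_1,v_2)$, giving a direct-sum decomposition $\mathcal{O}^N=(\mathcal{O} v_1\oplus\mathcal{O} v_2)\oplus\{v_1,v_2\}^\perp$ into a hyperbolic plane and a free complement of rank $N-2$ carrying a non-degenerate skew pairing. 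Inducting on $N$ yields a symplectic basis, so all non-degenerate skew pairings on $\mathcal{O}^N$ are equivalent to the standard one.

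The only subtle point is ensuring that the orthogonal complement is again a free direct summand of the correct rank; this follows cleanly in both cases once the distinguished vector has unit self-pairing (resp.\ pairs to $1$ with a partner), because the associated projector is defined over $\mathcal{O}$ and its kernel is then a projective, hence free (since $\mathcal{O}$ is local), $\mathcal{O}$-module. I don't expect any essential obstacle beyond verifying this splitting—the rest reduces to the well-known classification of symmetric and alternating forms over a complete discrete valuation ring with algebraically closed residue field of characteristic zero.
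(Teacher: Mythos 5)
Your proof is correct. The paper states this lemma without proof, treating it as a standard fact, so there is nothing to compare it against; your argument — normalizing a unit-length vector via Hensel's lemma in the symmetric case, splitting off a hyperbolic plane in the alternating case, then inducting, using that a direct summand of $\mathcal{O}^N$ is projective hence free over the local ring $\mathcal{O}$ — is the standard proof and is complete. One small stylistic note: in the alternating case, the map you write lands in $\mathcal{O}^2$, so strictly the retraction onto $\mathcal{O}v_1\oplus\mathcal{O}v_2$ is $w\mapsto -\langle v_2,w\rangle v_1 + \langle v_1,w\rangle v_2$; the intent is clear, but the formula as stated needs this reinterpretation to literally be a projector on $\mathcal{O}^N$.
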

More precisely, denote the originally fixed pairing by $g$ (which will be explicitly given later). Suppose that we find an element $h\in L\GL_N$ such that $h\gamma h^{-1}$ achieves ``the minimal reduction" and is compatible with a new pairing $g'$ on $\sO^{N}$(which will be explicit later). By the above lemma, we can find $y\in L^+\GL_N$ such that $y^tg'y=g$. Then $yh\gamma (yh)^{-1}$ is compatible with $g$. 

For $\Sp_N$, since $yh\gamma (yh)^{-1}$ shares the same characteristic polynomial with $\gamma$, then by Corollary \ref{cor:only depends on char}, it is conjugate to $\gamma$ in $L\Sp_N$, we obtain the minimal reduction. 

For $\SO_N$,  $yh\gamma (yh)^{-1}$ is conjugate to $\gamma$ in $L\mathrm{O}_N$. When the minimal reduction is not very even, the conjugation by $L\mathrm{O}_N$ suffices to determine the minimal reduction. For very even cases, we postpone to Corollary \ref{very even}.

\subsection{Companion Matrix and Pairings}\label{linear algebra}

In this part, we consider a vector space $V$ over $\mathbb{C}$. Assume that we have a non-degenerate symmetric pairing $g$ on $V$ and a \emph{regular} invertible linear transform $\Phi: V\rightarrow V$. If $g(\Phi u,v)=g(u,\Phi v)$, we say that $\Phi$ is self-adjoint (with respect to $g$) and if $g(\Phi u,v)=-g(u,\Phi v)$, we say that $\Phi$ is anti-self-adjoint. We use $\alpha(X)$ to denote the characteristic polynomial of $\Phi$.

\begin{lemma}\label{square of polynomial}
    Suppose that $\Phi$ is self-adjoint and $\dim V=2n$. There exist a maximal isotropic subspace $W$ of $V$ such that $\Phi (W)=W$ if and only if $\alpha(X)=\beta(X)^2$ for some polynomial $\beta(X)$. If so, it is unique. 
\end{lemma}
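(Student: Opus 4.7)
Since $\Phi$ is regular, the minimal polynomial of $\Phi$ equals $\alpha(X)$, so $V$ is a cyclic $\mathbb{C}[X]$-module and $V\cong \mathbb{C}[X]/(\alpha)$ with $\Phi$ acting by multiplication by $X$. Under this identification, $\Phi$-invariant subspaces of $V$ correspond bijectively to monic divisors $f$ of $\alpha$, the corresponding subspace being $f(\Phi)V$, whose dimension is $\deg\alpha-\deg f$. I would begin by recording this dictionary, together with the observation that since $\Phi$ is invertible, $\Phi(W)=W$ is equivalent to $W$ being $\Phi$-invariant.

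For the "only if" direction, suppose $W\subset V$ is a maximal isotropic subspace with $\Phi W=W$. Self-adjointness of $\Phi$ and non-degeneracy of $g$ force $W^\perp$ to be $\Phi$-invariant as well, and the maximality of $W$ gives $W=W^\perp$. The pairing $g$ then induces a non-degenerate pairing $W\times V/W\to\mathbb{C}$ via which the induced action of $\Phi$ on $V/W$ is the adjoint of $\Phi|_W$, so $\Phi|_{V/W}$ and $\Phi|_W$ share the same characteristic polynomial $\beta(X)$, yielding $\alpha(X)=\beta(X)^2$.

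For the "if" direction and uniqueness, suppose $\alpha=\prod_i p_i^{a_i}$ with the $p_i$ pairwise coprime irreducibles, so $\alpha=\beta^2$ forces every $a_i$ even, say $a_i=2e_i$. Decompose $V=\bigoplus V_i$ with $V_i=\ker p_i(\Phi)^{a_i}$. The usual argument (using that $p_j^{a_j}$ acts invertibly on $V_i$ for $i\neq j$ together with self-adjointness) shows the $V_i$ are pairwise $g$-orthogonal, so $g|_{V_i}$ is non-degenerate and any $\Phi$-invariant Lagrangian of $V$ must decompose as $\bigoplus W_i$ with $W_i$ a $\Phi$-invariant Lagrangian of $V_i$. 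This reduces us to the case $\alpha=p^{2e}$ and $V\cong\mathbb{C}[X]/(p^{2e})$. Among $\Phi$-invariant subspaces, only $W:=p(\Phi)^e V$ has dimension $e\deg p=\tfrac{1}{2}\dim V$, so $W$ is the unique candidate. To check it is Lagrangian, compute
\[
    W^\perp=\{v\in V\mid g(p(\Phi)^e u,v)=0\ \forall u\}=\{v\mid g(u,p(\Phi)^e v)=0\ \forall u\}=\ker p(\Phi)^e,
\]
using self-adjointness and non-degeneracy of $g$. In $\mathbb{C}[X]/(p^{2e})$, the kernel of multiplication by $p^e$ equals the image of multiplication by $p^e$, so $\ker p(\Phi)^e=p(\Phi)^e V=W$, proving $W=W^\perp$.

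The only subtle point is the reduction to the primary case, for which one must verify both that the primary decomposition is $g$-orthogonal (using self-adjointness in the standard way) and that the existence of a $\Phi$-invariant Lagrangian in $V$ is equivalent to its existence in each $V_i$; once this is in place, the cyclic case becomes a short module-theoretic calculation. Uniqueness is built into the argument, since in each primary summand there is exactly one invariant subspace of the correct dimension.
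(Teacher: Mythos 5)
Your proof is correct, and the ``only if'' direction matches the paper's: both exploit the duality $(V/W,\Phi)\cong (W^{*},\Phi^{\vee})$ coming from the non-degenerate pairing to conclude that the characteristic polynomial of $\Phi$ must be a perfect square. For the ``if'' direction and uniqueness, however, you take a detour the paper does not: you first split $V$ into its $g$-orthogonal primary components and reduce to the cyclic primary case $\mathbb{C}[X]/(p^{2e})$, identifying $p(\Phi)^e V$ as the unique invariant subspace of the right dimension and checking it is Lagrangian via $W^{\perp}=\ker p(\Phi)^e=p(\Phi)^e V$. The paper instead works directly with the global operator $\beta(\Phi)$: since $g\bigl(\beta(\Phi)u,\beta(\Phi)v\bigr)=g\bigl(u,\alpha(\Phi)v\bigr)=0$ by self-adjointness, the image $\beta(\Phi)V$ is isotropic, and since $\Phi$ is regular its minimal polynomial equals $\alpha=\beta^2$, forcing $\operatorname{rk}\beta(\Phi)=n$ by the Jordan form; uniqueness then falls out because any invariant Lagrangian $W$ has characteristic polynomial $\beta$ on it (by the ``only if'' direction and uniqueness of monic square roots), hence $W\subset\ker\beta(\Phi)=\beta(\Phi)V$, and the dimensions already agree. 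The two candidate subspaces you and the paper exhibit are in fact the same, since $\beta(\Phi)V=\bigoplus_i p_i(\Phi)^{e_i}V_i$ when one unwinds your decomposition. Your route is somewhat longer but makes the invariant-subspace lattice very explicit; the paper's route is shorter because $\beta(\Phi)$ can be handled in one stroke without ever invoking the primary decomposition.
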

\begin{proof}
    Suppose there exists such a maximal isotropic subspace $W$. The nondegenerate pairing $g$ induces an isomorphism:
    \[
    (V/W, \Phi)\rightarrow (W^*, \Phi^{\vee})
    \]
    Hence the characteristic polynomial of $\Phi$ on $V$ should be $(\Phi|_{W})^2$.

    Conversely, suppose that $\alpha(X)=\beta(X)^2$. Since $\Phi$ is self-adjoint, the image $\beta(\Phi)V\subset V$ is isotropic. 
    Notice that $\Phi$ is a regular transform, hence its minimal polynomial coincides with $\alpha(X)$. Since $\alpha(X)=\beta(X)^2$. Considering its Jordan normal form, we have $\rk \beta(\Phi)=1/2\dim V$.

    We now show that $\beta(\Phi)V$ is the unique choice. In fact, for any such $W$, since it is stable under $\Phi$, the characteristic polynomial of $\Phi|_{W}$ can only be $\beta(x)$. In particular $W\subset \ker \beta(\Phi)= \beta(\Phi)V$.
\end{proof}
The following lemma can be proved similarly.
\begin{lemma}\label{self dual polynomial}
    Suppose that $\Phi$ is anti-self-adjoint and $\dim V=2n$. There exist a maximal isotropic subspace $W$ of $V$ such that $\Phi (W)=W$ if and only if $\alpha(X)=\beta(X)\beta(-X)$ for some polynomial $\beta(X)$.
\end{lemma}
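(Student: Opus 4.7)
The plan is to follow the same strategy as in Lemma~\ref{square of polynomial}, with sign adjustments reflecting the anti-self-adjointness of $\Phi$. The key algebraic identity I would use throughout is $g(p(\Phi)u,v)=g(u,p(-\Phi)v)$ for any polynomial $p$, proved by induction on $\deg p$ from the defining relation $g(\Phi u,v)=-g(u,\Phi v)$. This replaces the unsigned version used in the self-adjoint case.

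For the forward direction, I would suppose a $\Phi$-stable maximal isotropic $W\subset V$ exists and set $\beta_0(X)$ to be the characteristic polynomial of $\Phi|_W$. The non-degenerate pairing $g$ gives an isomorphism $V/W\cong W^*$, and because $\Phi$ is anti-self-adjoint, the induced operator on $V/W$ corresponds to $-(\Phi|_W)^\vee$, whose characteristic polynomial is $(-1)^n\beta_0(-X)$ where $n=\dim W$. Multiplying the characteristic polynomials gives
\[
\alpha(X)=(-1)^n\,\beta_0(X)\,\beta_0(-X).
\]
Rescaling by a constant $c\in \mathbb{C}$ with $c^2=(-1)^n$ (which exists since we work over $\mathbb{C}$), the polynomial $\beta(X):=c\,\beta_0(X)$ satisfies $\alpha(X)=\beta(X)\beta(-X)$, as required.

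For the backward direction, given $\alpha(X)=\beta(X)\beta(-X)$, I would take $W:=\beta(\Phi)V$. This subspace is $\Phi$-stable because $\beta(\Phi)$ commutes with $\Phi$ and $\Phi$ is invertible. It is isotropic because
\[
g(\beta(\Phi)u,\beta(\Phi)v)=g(u,\beta(-\Phi)\beta(\Phi)v)=g(u,\alpha(\Phi)v)=0
\]
by the signed identity above and the Cayley--Hamilton theorem. Since $\Phi$ is regular, $V\cong \mathbb{C}[X]/(\alpha)$ as $\mathbb{C}[X]$-modules with $\Phi$ acting by multiplication by $X$; therefore $\dim \beta(\Phi)V=\deg\alpha-\deg\gcd(\beta,\alpha)=2n-n=n$, using that $\beta\mid\alpha$, so $W$ is maximal isotropic.

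I do not expect any serious obstacle, since the argument is entirely parallel to Lemma~\ref{square of polynomial}. The only delicate bookkeeping is the sign $(-1)^n$ appearing in the forward direction, which must be absorbed by a scalar rescaling of $\beta$ and requires working over an algebraically closed field; and the careful use of the signed identity to convert the isotropy condition into an application of Cayley--Hamilton in the backward direction.
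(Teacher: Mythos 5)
Your proof is correct and follows exactly the route the paper intends: the paper's own ``proof'' of this lemma is simply the remark that it can be proved similarly to Lemma~\ref{square of polynomial}, and your argument is precisely that adaptation, with the sign bookkeeping carried out explicitly (the identity $g(p(\Phi)u,v)=g(u,p(-\Phi)v)$, the $-(\Phi|_W)^\vee$ on the quotient, the $(-1)^n$ absorbed into a constant rescaling of $\beta$). The one cosmetic difference is that for the rank computation in the backward direction you invoke the cyclic-module picture $V\cong\mathbb{C}[X]/(\alpha)$ rather than the Jordan-form count used in the paper's proof of Lemma~\ref{square of polynomial}; these are interchangeable since regularity of $\Phi$ is exactly cyclicity, so this is not a genuinely different argument.
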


We denote the Jordan normal form of $\Phi$ by $\mathrm{diag}(J_i)$ where the eigenvlaues of $J_i$ are denoted as $\lambda_i$ and $J_i$ is of size $m_i$. In particular, $\lambda_i$ are pairwise different and we decompose $V=\oplus_{i}V_i$ where each $V_i$ is the generalized $\lambda_i$-eigenspace. 
\begin{lemma}\label{lem:canonical metric}
    Given a vector space $U$ of dimension $m$ and $J\in \End(U)$ with single Jordan block. Let $g$ be a nondegenerate symmetric pairing on $U$ such that $g(Ju,v)=g(u,Jv)$. Then there exists a basis $\{e_i\}$ with $J(e_i)=\lambda_i e_i+e_{i-1}$ and $g(e_i, e_{m+1-i})=1$ and $g(e_i,e_j)=0, j\ne m+1-i$.
\end{lemma}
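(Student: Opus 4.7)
The plan is to reduce to the nilpotent case, use self-adjointness to show that on any Jordan chain the values of $g$ depend only on the sum of the indices, and then renormalize the cyclic generator by an invertible polynomial in $J$ to reach the anti-diagonal form. Let $\lambda$ be the unique eigenvalue of $J$ and set $N := J - \lambda\,\Id_U$. Since $\lambda\,\Id_U$ is trivially self-adjoint, $N$ remains self-adjoint, is nilpotent, and has a single Jordan block of size $m$. Choose any cyclic vector $v\in U$ with $N^{m-1}v\neq 0$ and put $f_i := N^{m-i}v$ for $i=1,\ldots,m$, so $\{f_1,\ldots,f_m\}$ is a basis satisfying $Nf_i = f_{i-1}$ with $f_0=0$; equivalently $Jf_i = \lambda f_i + f_{i-1}$.

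The iterated self-adjointness identity $g(N^a x,N^b y) = g(x,N^{a+b}y)$ gives
\[
g(f_i,f_j) \;=\; g(N^{m-i}v,\, N^{m-j}v) \;=\; g(v,\, N^{2m-i-j}v),
\]
which depends only on $i+j$. Setting $a_s := g(v,\, N^{m-s}v)$ for $s\geq 1$ and $a_s := 0$ for $s\leq 0$, we have $g(f_i,f_j) = a_{i+j-m}$. Nondegeneracy of $g$ forces $a_1\neq 0$: the functional $g(f_1,-)$ evaluates to $a_{1+j-m}$ on $f_j$, which vanishes for $j<m$, so only $a_1 = g(f_1,f_m)$ is available to prevent $f_1$ from lying in the radical of $g$.

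Now modify the cyclic vector by an invertible polynomial $P(N) = p_0\,\Id_U + p_1 N + \cdots + p_{m-1}N^{m-1}$ with $p_0 \neq 0$, and set $e_i := P(N)f_i$. Since $P(N)$ commutes with $N$, the $e_i$ still form a Jordan chain, $Ne_i = e_{i-1}$. Using self-adjointness of $P(N)$ and writing $Q(N) := P(N)^2 = \sum_k q_k N^k$ in $\CC[N]/(N^m)$,
\[
g(e_i,e_j) \;=\; g(f_i,\, Q(N)f_j) \;=\; \sum_{k=0}^{s-1} q_k\, a_{s-k}, \qquad s := i+j-m.
\]
Imposing the target condition $g(e_i,e_j) = \delta_{i+j,\,m+1}$ becomes $q_0 a_1 = 1$ together with $\sum_{k=0}^{s-1} q_k a_{s-k} = 0$ for $s = 2,\ldots,m$. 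Since $a_1 \neq 0$, this triangular system determines $q_0,\ldots,q_{m-1}$ uniquely; and since $q_0 \neq 0$, the unit $Q$ admits a square root $P$ in $\CC[N]/(N^m)$ (pick $p_0$ with $p_0^2 = q_0$ and solve recursively for $p_k$ from $2p_0 p_k + \sum_{i=1}^{k-1} p_i p_{k-i} = q_k$). The only mild subtlety is the observation that $g(f_i,f_j)$ depends on $i+j$ alone, which is what turns the renormalization into a solvable triangular problem; everything else is routine linear algebra over $\CC$, so I do not anticipate a serious obstacle.
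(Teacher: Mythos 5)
Your proof is correct and follows the same underlying strategy as the paper: pass to the nilpotent part $N=J-\lambda\Id$, observe that self-adjointness makes $g(f_i,f_j)$ depend only on $i+j$ (vanishing for $i+j\le m$), and change basis by a unit $P(N)\in\CC[N]/(N^m)$ to anti-diagonalize the Gram matrix. The paper solves the resulting quadratic system for the coefficients of the Toeplitz matrix directly, asserting only that it has finitely many solutions, while you linearize by working with $Q=P^2$ and then extract a square root of the unit $Q$; this is a cleaner and more explicit way of closing the same argument, and you also spell out the nondegeneracy step ($a_1\neq 0$) that the paper leaves implicit.
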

\begin{proof}
    Clearly, we can find a basis satisfying the first condition, denoted by $\{f_i\}$. Since $g(Ju,v)=g(u,Jv)$, we can check that 
    \begin{enumerate}
        \item $g(f_i,f_j)=g(f_l,f_k)$ if $i+j=l+k> m$,
        \item $g(f_i,f_j)=0$ if $i+j\le m$.
    \end{enumerate}
    Without loss of generality, we may assume that $g(f_i,f_{m+1-i})=1$. Now we only need to solve the following equations for $\{a_i\}$:
    \[
    \begin{pmatrix}
        0&0&0\ldots&0&1\\
        0&0&0\ldots&1&*\\
        &&\ldots\\
        0&1&*\ldots&*&*\\
        1&*&*\ldots&*&*
    \end{pmatrix}=A^t\begin{pmatrix}
        0&0&0\ldots&0&1\\
        0&0&0\ldots&1&0\\
        &&\ldots\\
        0&1&0\ldots&0&0\\
        1&0&0\ldots&0&0
    \end{pmatrix}A
    \]
    where 
    \[
    A=\begin{pmatrix}
        1&a_1&a_2&a_3&\ldots&a_{m-1}&a_m\\
        0&1&a_1&a_2&\ldots&a_{m-2}&a_{m-1}\\
        &&&&\ldots&\\
        0&0&0&0&\ldots& 1&a_1\\
        0&0&0&0&\ldots& 0&1
    \end{pmatrix}
    \]
    It is straightforward to check that it has (finitely many) solutions.
\end{proof}
We may denote the pairing by $g_0$  where $g_0(e_i, e_{m-i})=1$ and $g_0(e_i,e_j)=0, j\ne m-i$. The following corollary is immediate.
\begin{corollary}\label{cor:canonical split}
    Suppose that $\Phi$ is self-adjoint, then there exists a choice of a basis such that:
    \[
    (V,\Phi,g)=\oplus(V_i,J_i,g_{0,i})
    \]
\end{corollary}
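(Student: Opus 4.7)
The plan is to reduce this corollary to Lemma~\ref{lem:canonical metric} applied block by block. The key observation needed is that the generalized-eigenspace decomposition $V = \bigoplus_i V_i$ is automatically orthogonal with respect to $g$, so that $g$ restricts to a non-degenerate symmetric pairing on each summand; once this is known, $\Phi|_{V_i}$ and $g|_{V_i}$ satisfy the hypotheses of Lemma~\ref{lem:canonical metric} on each $V_i$ separately, and concatenating the resulting bases yields the desired decomposition.

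First I would establish the orthogonality. For $u \in V_i$ and $v \in V_j$ with $i \neq j$, iterating the self-adjointness identity $g(\Phi\cdot,\cdot) = g(\cdot,\Phi\cdot)$ gives $g(p(\Phi)u, v) = g(u, p(\Phi)v)$ for every polynomial $p$. Take $p(X) = (X - \lambda_i)^N$ for $N$ large enough that $p(\Phi)$ annihilates the generalized $\lambda_i$-eigenspace $V_i$. Then the left-hand side vanishes, so $g(u, p(\Phi)v) = 0$. Since $\lambda_j \neq \lambda_i$, the operator $p(\Phi)|_{V_j}$ is invertible on $V_j$, hence surjective, so letting $v$ range over $V_j$ shows $g(u, V_j) = 0$.

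Next, since $g$ is non-degenerate on $V$ and the decomposition $V = \bigoplus_i V_i$ is $g$-orthogonal, each restriction $g|_{V_i}$ must be non-degenerate. The assumption that $\Phi$ is regular means its minimal polynomial equals its characteristic polynomial, so each generalized eigenspace $V_i$ consists of a single Jordan block, i.e.\ $\Phi|_{V_i} = J_i$ in the notation preceding Lemma~\ref{lem:canonical metric}. Moreover $\Phi|_{V_i}$ is self-adjoint with respect to $g|_{V_i}$ by restriction.

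Finally I would apply Lemma~\ref{lem:canonical metric} to each triple $(V_i, J_i, g|_{V_i})$ to obtain a basis of $V_i$ in which $\Phi|_{V_i}$ takes the standard Jordan form $J_i$ and $g|_{V_i}$ takes the standard form $g_{0,i}$; the union of these bases gives the claimed orthogonal decomposition $(V, \Phi, g) = \bigoplus_i (V_i, J_i, g_{0,i})$. There is no serious obstacle in this argument; the only place the self-adjointness hypothesis is essentially used is in establishing the $g$-orthogonality of the generalized eigenspaces, and that step is handled by the standard polynomial trick above.
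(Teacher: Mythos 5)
Your proof is correct, and it supplies exactly the missing content behind the paper's assertion that the corollary is ``immediate'' from Lemma~\ref{lem:canonical metric}: the standard polynomial trick showing that the generalized eigenspaces are pairwise $g$-orthogonal (hence $g$ restricts non-degenerately to each $V_i$), combined with regularity of $\Phi$ to guarantee a single Jordan block per eigenvalue, after which the lemma applies block by block. This is the intended argument.
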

Now we are ready to prove the following.
\begin{proposition}\label{prop:codim 1}
     Suppose that $\Phi$ is self-adjoint and $\dim V=2n$. There exists a maximal isotropic subspace $W$ of $V$ such that $\codim_{W}\Phi(W)\cap W=1$.
\end{proposition}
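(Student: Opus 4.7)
The plan is to construct $W$ explicitly from the canonical block decomposition $(V,\Phi,g)=\bigoplus_i(V_i,J_i,g_{0,i})$ provided by Corollary~\ref{cor:canonical split}. In the basis $\{e_j^{(i)}\}_{j=1}^{m_i}$ of Lemma~\ref{lem:canonical metric}, the ``lower half'' $L_i:=\langle e_1^{(i)},\ldots,e_{\lfloor m_i/2\rfloor}^{(i)}\rangle$ is $\Phi$-stable and isotropic, since $\Phi e_j^{(i)}=\lambda_i e_j^{(i)}+e_{j-1}^{(i)}$ preserves it and the anti-diagonal form of $g_{0,i}$ vanishes on it. Let $L:=\bigoplus_i L_i$; it is $\Phi$-stable and isotropic of dimension $n-|S|/2$, where $S:=\{i:m_i\text{ odd}\}$ has even cardinality because $\sum m_i=2n$. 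When $S=\varnothing$, $L$ is already maximal isotropic, and replacing $L_{i_0}$ in a single block by its ``upper half'' $U_{i_0}:=\langle e_{m_{i_0}/2+1}^{(i_0)},\ldots,e_{m_{i_0}}^{(i_0)}\rangle$ produces a maximal isotropic $W$ for which $\Phi(U_{i_0})+U_{i_0}=U_{i_0}+\langle e_{m_{i_0}/2}^{(i_0)}\rangle$, so $\codim_W(W\cap\Phi W)=1$.

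When $S\ne\varnothing$, I pass to the quotient $E:=L^\perp/L\cong\bigoplus_{i\in S}(L_i^\perp/L_i)$, a $|S|$-dimensional space equipped with a nondegenerate standard symmetric pairing $\bar g$ (in the basis $\{\bar e_{(m_i+1)/2}^{(i)}\}_{i\in S}$) and an induced diagonal self-adjoint operator $\bar\Phi$ whose eigenvalues $\{\lambda_i\}_{i\in S}$ are pairwise distinct. Any maximal isotropic $\bar W\subset E$ lifts to a maximal isotropic $W:=L\oplus\tilde W\subset V$, and because $\Phi(L)=L$ a short diagram chase gives
\[
\codim_W(W\cap\Phi W)=\codim_{\bar W}(\bar W\cap\bar\Phi\bar W),
\]
reducing the proposition to the diagonal subcase: in $\mathbb{C}^{2s}$ (with $s=|S|/2$) equipped with the standard pairing and diagonal $\Lambda=\mathrm{diag}(\mu_1,\ldots,\mu_{2s})$ of pairwise distinct entries, produce a maximal isotropic $\bar W$ with $\codim_{\bar W}(\bar W\cap\bar\Phi\bar W)=1$, i.e.\ with the induced form $B_{\bar W}(u,v):=\bar g(\Lambda u,v)$ of rank exactly $1$.

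For this subcase, parameterize $\bar W$ as the column span of $\begin{pmatrix}I_s\\\sqrt{-1}\,U\end{pmatrix}$ with $U^TU=I_s$, which automatically enforces isotropy since $(\sqrt{-1})^2=-1$. Splitting $\Lambda=\mathrm{diag}(\Lambda_1,\Lambda_2)$ accordingly, the restriction $B_{\bar W}$ takes the matrix shape $\Lambda_1-U^{-1}\Lambda_2U$, and the rank-one condition becomes $U^{-1}\Lambda_2U=\Lambda_1-vv^T$ for some $v\in\mathbb{C}^s$. By the matrix determinant lemma, $\Lambda_1-vv^T$ has spectrum $\{\mu_{s+1},\ldots,\mu_{2s}\}$ exactly when
\[
\sum_{m=1}^s \frac{v_m^2}{\mu_m-\mu_{s+k}}=1\qquad(k=1,\ldots,s),
\]
a Cauchy-type linear system in $\{v_m^2\}$ whose coefficient matrix is an invertible Cauchy matrix since all $\mu$'s are distinct; hence $v$ exists. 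A complex symmetric matrix with simple spectrum has pairwise $\bar g$-orthogonal eigenvectors, and generically these can be normalized to $\bar g$-unit length, producing the required $U$.

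The main obstacle lies precisely in this last step: controlling the complex-orthogonal diagonalization $U^{-1}(\Lambda_1-vv^T)U=\Lambda_2$, i.e.\ ensuring that no eigenvector of $\Lambda_1-vv^T$ is $\bar g$-null (which would prevent normalization). This is a codimension condition on $v$ that fails only on a proper subvariety; I plan to sidestep it either by adaptively choosing the partition $\{1,\ldots,2s\}=\{1,\ldots,s\}\sqcup\{s+1,\ldots,2s\}$, or by a small perturbation/continuity argument over the admissible $v$'s. Once established, this diagonal subcase combines with the $S=\varnothing$ construction to produce the desired maximal isotropic $W$ with $\codim_W(W\cap\Phi W)=1$ in all cases.
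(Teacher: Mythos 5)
Your reduction to the diagonal operator on $E = L^\perp/L$ (via the $\Phi$-stable isotropic $L = \bigoplus_i L_i$) is essentially the same as the paper's, as is the identification of $\codim_{\bar W}(\bar W\cap\bar\Phi\bar W)$ with the rank of the induced form $B_{\bar W}$. Where you diverge is the diagonal subcase. The paper's argument is much lighter: it picks a single vector $v$ so that the cyclic subspace $\bar W = \langle v,\psi v,\ldots,\psi^{\delta-1}v\rangle$ is isotropic (the isotropy conditions $\bar g(v,\psi^k v)=0$ for $0\le k\le 2\delta-2$ form a Vandermonde system with a one-dimensional kernel, spanned by Lagrange-type coefficients, all nonzero, so $v$ is cyclic). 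The cyclic structure makes $\codim_{\bar W}(\psi\bar W\cap\bar W)\le 1$ automatic, while Lemma~\ref{square of polynomial} rules out a $\psi$-stable maximal isotropic, giving $\ge 1$. You instead parametrize maximal isotropic subspaces by complex-orthogonal $U$, turn the rank-one condition into the spectral constraint $U(\Lambda_1-vv^T)U^{-1}=\Lambda_2$, and solve a Cauchy linear system for $v_m^2$ — a valid but considerably heavier route to the same end.

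The one place you actually stop short is not a real gap. You worry about $\bar g$-null eigenvectors of $\Lambda_1-vv^T$ obstructing the complex-orthogonal diagonalization. But the Cauchy system forces $\Lambda_1-vv^T$ to have the simple spectrum $\{\mu_{s+1},\ldots,\mu_{2s}\}$; hence it is diagonalizable, its eigenvectors form a basis, and (being a $\bar g$-symmetric operator with distinct eigenvalues) they are pairwise $\bar g$-orthogonal. The Gram matrix of $\bar g$ in that eigenbasis is therefore diagonal, and nondegeneracy of $\bar g$ forces every diagonal entry nonzero — so every eigenvector can be rescaled to $\bar g$-unit length. No perturbation or adaptive choice of the splitting $\{1,\ldots,2s\}=\{1,\ldots,s\}\sqcup\{s+1,\ldots,2s\}$ is needed; inserting this observation closes your argument. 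As a minor point in your favor, your explicit treatment of $S=\varnothing$ (swapping one lower half $L_{i_0}$ for the upper half $U_{i_0}$) handles a degenerate case that the paper's proof quietly skips by reducing to the odd blocks, implicitly assuming at least one exists.
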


\begin{proof}
   By Corollary \ref{cor:canonical split} and Lemma \ref{square of polynomial}, it suffices to construct such maximal isotropic subspaces for:
   \[
   \oplus_{\dim V_i\equiv 1} (V_i,J_i,g_{0,i}).
   \]
   For each $V_i$, there is clearly an isotropic subspace $W_i$ of dimension $(\dim V_i-1)/2$ which is stable under $J_i$. Now consider the following construction:
   \[
   (\oplus W_i^{\perp}/W_i, J_i=\lambda_i, g_{0,i}=1)
   \]
   It suffices to construct a maximal isotropic subspace $V$ of $\oplus W_i^{\perp}/W_i$ such that $\codim_V (\mathrm{diag}(\lambda_i)(V)\cap V)=1$.

   Since $\lambda_i$ are pairwise distinct, then by Lemma \ref{square of polynomial}, for there is no nonzero isotropic subsapce of $\oplus W_i^{\perp}/W_i$ stable under $\psi:=\mathrm{diag}(\lambda_i)$. Let us put $\dim\oplus W_i^{\perp}/W_i=2\delta$. To construct such a $V$, we only need to find $v\in \oplus W_i^{\perp}/W_i$ such that $\left\langle v,  \psi v, \ldots, \psi^{\delta-1}v\right\rangle$ is isotropic which does exist. 
\end{proof}
 
\begin{lemma}\label{even maximal isotropic}
    Suppose that $\Phi$ is self-adjoint and $\dim V=2n$. Then we have an isotropic subspace $V_{0}$ of dimension $n-1$, and two vectors $v^{-}$ and $v^{+}$, satisfying the following conditions:
    \begin{enumerate}
        \item The pairing of $v^{\pm}$ with $V_{0}$ is zero, and $g(v^{\pm}, v^{\pm})=1$, $g(v^{\pm}, v^{\mp})=0$;
        \item  $\left\langle v^{-}\right\rangle+V_0=\Phi\left(V_0+\left\langle v^{+}\right\rangle\right)$.
    \end{enumerate}
\end{lemma}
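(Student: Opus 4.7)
The plan is to follow the pattern of Proposition~\ref{prop:codim 1}: decompose $V$ into Jordan blocks via Corollary~\ref{cor:canonical split}, pick a $\Phi$-invariant isotropic $W \subset V$ so that the quotient $V'' := W^\perp/W$ is small, then do the construction in $V''$ by linear algebra and lift back. Concretely, after writing $V = \bigoplus_i V_i$ with Jordan blocks of sizes $m_i$ and eigenvalues $\lambda_i$, let $r$ be the number of blocks of odd $m_i$; then $r$ is even because $\dim V = 2n$. If $r \geq 2$, I take $W = \bigoplus_i W_i$ with $W_i$ the maximal $\Phi$-invariant isotropic in $V_i$, giving $\dim W = n - r/2$ and a quotient $V''$ of dimension $r$ with $\bar{\psi}$ the diagonal operator with the odd-block eigenvalues (distinct, nonzero). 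If $r = 0$, I pick an $i_0$ and replace $W_{i_0}$ by the $\Phi$-invariant isotropic of codimension one inside the maximal one, so that $\dim W = n - 1$ and $V''$ becomes a $2$-dimensional hyperbolic plane with $\bar{\psi}$ a single $2 \times 2$ Jordan block of eigenvalue $\lambda_{i_0}$. In all cases $V''$ has even dimension $r' \geq 2$ with regular self-adjoint invertible $\bar{\psi}$.

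The heart of the argument is to find $\bar{v} \in V''$ with $a_k := g(\bar{v}, \bar{\psi}^k\bar{v}) = 0$ for every $1 \leq k \leq r'-1$ and $a_0 \neq 0$; the non-vanishing of $a_{r'}$ then follows automatically from Cayley--Hamilton, since $a_{r'} = -c_0 a_0$, where $c_0 \neq 0$ is the constant term of the characteristic polynomial of $\bar{\psi}$ (nonzero by invertibility of $\Phi$). In the diagonal case, writing $\bar{v} = \sum_i c_i e_i$ in the eigenbasis and setting $d_i = g_i c_i^2$, the vanishing conditions become the linear system $\sum_i d_i \lambda_i^k = 0$ for $1 \leq k \leq r'-1$, whose solution space is one-dimensional by a Vandermonde rank count; a Cramer-type formula for the generator shows each $d_i \neq 0$ (so $\bar{v}$ is cyclic for $\bar{\psi}$), and $\sum_i d_i \neq 0$ because the constant function $1$ cannot be written as a polynomial without constant term at $r'$ distinct points. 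In the Jordan case, I would take $\bar{v} = e_1 - 2\lambda_{i_0} e_2$ and verify the conditions directly using $\bar{\psi}^2 = 2\lambda_{i_0}\bar{\psi} - \lambda_{i_0}^2$.

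Once $\bar{v}$ is fixed, set $\bar{V}_0 := \langle \bar{\psi}\bar{v}, \ldots, \bar{\psi}^{r'/2-1}\bar{v}\rangle$, which is isotropic of dimension $r'/2 - 1$ since $g(\bar{\psi}^i\bar{v}, \bar{\psi}^j\bar{v}) = a_{i+j}$ and $i + j$ ranges over $\{2, \ldots, r'-2\}$. Let $\bar{v}^+ := \bar{v}/\sqrt{a_0}$ and $\bar{v}^- := \bar{\psi}^{r'/2}\bar{v}/\sqrt{a_{r'}}$; the pairing relations $g(\bar{v}^\pm, \bar{v}^\pm) = 1$ are immediate, $g(\bar{v}^+, \bar{v}^-)$ is proportional to $a_{r'/2} = 0$, and the identity $\bar{\psi}(\bar{V}_0 + \langle \bar{v}^+\rangle) = \bar{V}_0 + \langle \bar{v}^-\rangle$ holds by construction. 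Finally I take $V_0 \subset V$ to be the preimage of $\bar{V}_0$ under the projection $W^\perp \twoheadrightarrow V''$ (dimension $n-1$) and pick any lifts $v^\pm \in W^\perp$ of $\bar{v}^\pm$. Since $W$ is $\Phi$-invariant and isotropic and $V_0$ and $v^\pm$ all lie in $W^\perp$, the isotropy of $V_0$, the pairing conditions for $v^\pm$, and the covariance $\Phi(V_0 + \langle v^+\rangle) = V_0 + \langle v^-\rangle$ all descend from the corresponding statements in $V''$.

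The hardest part will be producing the cyclic vector $\bar{v}$ in the second step with the full vanishing pattern and $a_0 \neq 0$, together with the verification that both case-constructions above genuinely yield such a vector. This is exactly where the regularity of $\Phi$ (which guarantees distinct eigenvalues on $V''$ in the non-trivial case, and hence cyclicity of $\bar{v}$) and its invertibility (so that $\lambda_i \neq 0$ throughout and the constant term of the characteristic polynomial of $\bar{\psi}$ does not vanish) both play an essential role.
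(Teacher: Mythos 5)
Your proof is correct and takes essentially the same approach as the paper: decompose into Jordan blocks, cut down to a small quotient $V'' = W^\perp/W$ by a $\Phi$-invariant isotropic $W$ of dimension $n - r'/2$ (a $2$-dimensional Jordan block when all blocks are even, a diagonal operator with distinct nonzero eigenvalues when some are odd), construct the cyclic vector with the required isotropy pattern on $V''$, and lift back to $V$. The paper leaves both the cyclic-vector existence and the lifting as routine, whereas you carefully justify them via the Vandermonde/Cramer rank count, the observation that $\sum_i d_i \neq 0$, and the Cayley--Hamilton identity $a_{r'} = -c_0 a_0$.
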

\begin{proof}
   Assuming that all $V_i$ are of even dimension, then  we can find an isotropic subspace of dimension $n-1$ which is stable under $\Phi$. We denote it by $V_0$ and by $V_0^{\perp}$ the orthogonal complement of $V_0$. In particular, $v^{\pm}\in V_0^{\perp}$ and $\dim V_0^{\perp}/V=2$. Since $\Phi$ is invertible, to satisfy Condition 2, it suffices to show that $v^{-}\in \Phi(V_0+\langle v^+\rangle)$. 
   Since $V_0$ is isotropic and invariant under $\Phi$, it suffices to construct $v^{\pm}$ for  $(V_0^{\perp}/V_0,\Phi)$, i.e., 2-dimensional cases which can be done easily.

  Suppose not. For each $V_i$ in the decomposition in Lemma \ref{lem:canonical metric}, we can find an isotropic subspace $V_{i,0}$ of dimension $\lfloor\frac{\dim V_i}{2}\rfloor$ which is stable under $\Phi$. Then it suffices to construct for  $(\oplus \frac{V^{\perp}_{0,i}}{V_{0,i}}, \oplus (\lambda_i))$ where $\lambda_i$ are eigenvalues of $\Phi$. Hence we may assume that $(V,\Phi)=\oplus_{i=2n} (V_i,\lambda_i)$ where each $\dim V_i=1$. It suffices to find a vector $v_{+}$, such that the triple we want is $(v^+, V_0=\langle \Phi v^{+},\ldots, \Phi^{n-1}v^+\rangle,\Phi^{n}v^+)$. Notice that $g(v^+,V_0)=g(v^{-},V_0)=g(V_0,V_0)=0$ has detemined the $\langle v_0\rangle$ uniquely, and for such $v_0$, it satisfies $g(v^+,v^+)\ne 0, g(\Phi^n v^+,\Phi^n v^+)\ne 0$ automatically.
\end{proof}
The proof and construction of the following lemma is similar to the second part of the previous lemma.
\begin{lemma}\label{odd maximal isotropic}
    Suppose that $\Phi$ is self-adjoint and $\dim V=2n+1$. Then we have an isotropic subspace $V_{0}$ of dimension $n$, and two vectors $v^{\pm}$, satisfying the following conditions: 
    \begin{enumerate}
        \item The pairing of $v^\pm$ with $V_{0}$ is zero, and $g(v^\pm, v^\pm)=1$;
        \item $V_{0}\subset \Phi (V_{0}+\left\langle  v^+\right\rangle)$;
        \item $\Phi (V_{0})\subset \left\langle  v^-\right\rangle + V_{0}$.
    \end{enumerate}
\end{lemma}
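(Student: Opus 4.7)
The plan is to mirror the ``suppose not'' (diagonalisable) portion of the proof of Lemma~\ref{even maximal isotropic}, adapted to the odd-dimensional setting. First I would apply Corollary~\ref{cor:canonical split} to decompose $(V,\Phi,g)=\bigoplus_i(V_i,J_i,g_{0,i})$ with distinct eigenvalues $\lambda_i$ and block sizes $m_i$, and via Lemma~\ref{lem:canonical metric} pick in each summand the standard $\Phi$-stable isotropic subspace $V_{i,0}:=\langle e_1^{(i)},\dots,e_{\lfloor m_i/2\rfloor}^{(i)}\rangle$. Setting $V_0':=\bigoplus_i V_{i,0}$, the quotient $(V_0')^\perp/V_0'$ is a non-degenerate quadratic space on which $\Phi$ acts diagonally with distinct eigenvalues (only those $\lambda_i$ with $m_i$ odd survive). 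Its dimension equals the number $s$ of odd $m_i$'s, which is odd because $\dim V=2n+1$ is odd. It therefore suffices to produce the desired data on this quotient, after which one lifts by setting $V_0:=V_0'\oplus\tilde V_0''$ and taking $v^\pm$ to be any lifts in $(V_0')^\perp$.

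Reduced to the diagonalisable case $(V,\Phi,g)\cong(\mathbb{C}^{2n+1},\operatorname{diag}(\lambda_i),I)$, I would seek a cyclic vector $v^+=\sum_i c_i e_i$ and put
\[
V_0:=\langle \Phi v^+,\Phi^2 v^+,\dots,\Phi^n v^+\rangle,
\]
with $v^-$ taken as the representative of $\Phi^{n+1}v^+$ modulo $V_0$ that lies in $V_0^\perp$ (then rescaled to unit self-pairing). Conditions~(2) and~(3) then hold by direct inspection: $V_0+\langle v^+\rangle=\langle v^+,\Phi v^+,\dots,\Phi^n v^+\rangle$ so that $\Phi(V_0+\langle v^+\rangle)\supset V_0$, and $\Phi(V_0)=\langle \Phi^2 v^+,\dots,\Phi^{n+1}v^+\rangle\subset V_0+\langle v^-\rangle$. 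Self-adjointness converts the isotropy of $V_0$, the orthogonality $v^+\perp V_0$, and the normalisation $g(v^+,v^+)=1$ into the Vandermonde system
\[
\sum_i c_i^{2}\lambda_i^{k}=\delta_{k,0},\qquad k=0,1,\dots,2n,
\]
in the variables $x_i:=c_i^{2}$. Distinctness of the $\lambda_i$ makes the system uniquely solvable, and the explicit Lagrange-style expression $x_i=\prod_j\lambda_j/\bigl(\lambda_i\prod_{j\ne i}(\lambda_i-\lambda_j)\bigr)$ shows all $x_i\ne 0$, so $v^+$ is honestly cyclic and $V_0$ really has dimension $n$.

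The main obstacle is handling the apparent overcount at the top end of the range. The condition $v^-\perp V_0$ at the highest index brings in $g(v^+,\Phi^{2n+1}v^+)$, which by Cayley--Hamilton equals a nonzero scalar multiple of $g(v^+,v^+)=1$; the resolution is to absorb this contribution by the choice of representative of $v^-$ within its $V_0$-coset and by the rescaling achieving $g(v^-,v^-)=1$. Carefully matching the imposed linear conditions with the Cayley--Hamilton-induced relations, and verifying that the solution $x_i$ never vanishes so that the recipe really produces a cyclic $v^+$, constitute the technical heart of the argument. The asymmetry between conditions~(2) and~(3)---one effectively on $\Phi^{-1}$ and the other on $\Phi$---is what distinguishes the odd-dimensional case from its even-dimensional counterpart and is precisely what the Cayley--Hamilton compatibility is designed to reconcile.
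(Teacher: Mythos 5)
Your construction of $V_0=\langle \Phi v^+,\dots,\Phi^n v^+\rangle$ together with the Vandermonde system for $x_i=c_i^2$ is fine for producing $v^+$ and an isotropic $V_0$ satisfying conditions~(1) and~(2). The gap is in the step where you claim that $v^-$ can be taken as ``the representative of $\Phi^{n+1}v^+$ modulo $V_0$ that lies in $V_0^\perp$.'' No such representative exists: since $V_0$ is isotropic we have $V_0\subset V_0^\perp$, so for any $w\in V_0$ one has
\[
g(\Phi^{n+1}v^+ + w,\, \Phi^n v^+) = g(\Phi^{n+1}v^+, \Phi^n v^+) = \sum_i c_i^2\lambda_i^{2n+1},
\]
and Cayley--Hamilton together with $\sum c_i^2\lambda_i^k=\delta_{k,0}$ for $0\le k\le 2n$ forces $\sum_i c_i^2\lambda_i^{2n+1} = \pm\det\Phi\neq 0$. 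Thus no choice of coset representative or rescaling can make $v^-\perp V_0$; your ``Cayley--Hamilton compatibility'' runs the wrong way, producing an obstruction rather than a resolution.

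In fact, for this $V_0$ one has $V_0+\langle v^\pm\rangle = V_0^\perp$, so (2)~and~(3) together would demand $\Phi V_0\perp V_0$ and $\Phi^{-1}V_0\perp V_0$ simultaneously, which adds $\sum c_i^2\lambda_i^{2n+1}=0$ to the Vandermonde system and makes it inconsistent with $\sum c_i^2=1$. Concretely, for $n=1$, $V=\mathbb{C}^3$, $g=I$, $\Phi=\operatorname{diag}(1,2,3)$, the equations $\sum x_i=\sum\lambda_i x_i=\sum x_i/\lambda_i=0$ force $x=0$, so no isotropic line $V_0$ works for both inclusions. The lemma, as actually used in the proof of Proposition~\ref{choice of W}, only ever requires $(V_0,v^+)$ with~(2) on one block and $(V_0',v^-)$ with~(3) on a \emph{different} block; the two constructions need not share the same isotropic subspace. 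The correct argument therefore produces the $v^+$-datum as you did, and separately produces the $v^-$-datum from the mirror construction $V_0' = \langle\Phi^{-1}v^-,\dots,\Phi^{-n}v^-\rangle$ (equivalently, apply the $v^+$-argument to $\Phi^{-1}$). Your proposal, by insisting on a single $V_0$ for both, cannot be repaired as written.
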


\section{Construction of Minimal Reduction: Type A and C}\label{type AC}

\subsection{Construction via Skeleton}

We first provide a construction of minimal reduction in types A and C following the idea of Yun\cite{Yun21}. A closely related argument already appeared in~\cite{JY23}, although restricted there to elements whose Newton polygon has a single slope. In the subsequent subsection, we give an alternative construction whose strategy will also apply to special orthogonal groups.

\begin{definition}\label{def:skeleton}
    For each conjugacy class $[w]$ of the Weyl group, let $LT_{[w]}$ be a maximal torus of $G$ over $\sK$ corresponding to $[w]$. 
    Let $L^+T_{[w]}$ be the unique solvable parahoric subgroup of $LT_{[w]}$. 
    The \emph{skeleton} is defined to be
    \[
        \sX^{[w]} := (\Gr_G)^{L^+T_{[w]}}_{\mathrm{red}}.
    \]
\end{definition}

The relation between skeletons and minimal reductions is given by the following result.

\begin{lemma}[\cite{Yun21}, Lemma~7.3]
    For any $\gamma\in L\g_{[w]}\cap L^{\heartsuit}\g$, we have
    \[
        \sX^{[w]}\cap \Gr_{\gamma,\bf{O}}\neq\emptyset,
    \]
    where $\bf{O}\in \RT_{\min}(\gamma)$.
\end{lemma}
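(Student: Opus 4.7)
The goal is, for each minimal orbit $\bf{O}\in\RT_{\min}(\gamma)$, to produce an $L^+T_{[w]}$-fixed point in the (non-empty, locally closed) stratum $\Gr_{\gamma,\bf{O}}$. The strategy is a Borel fixed point argument, applied inside a closed subscheme of the affine Springer fiber cut out by the minimality of $\bf{O}$.

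\textbf{Step 1 (Reduction to $LG_\gamma=LT_{[w]}$).} The assumption $\gamma\in L\g_{[w]}$ means that $LG_{\gamma}$, viewed as a torus over $\Spec\sK$, is $LG$-conjugate to $LT_{[w]}$. By Lemma~\ref{lem:single conj class} and Lemma~\ref{lem:congjuation invriance of RT}, conjugating $\gamma$ inside $LG$ intertwines the stratifications of $\Gr_\gamma$ and $\Gr_{\gamma'}$, so we may assume outright that $LG_\gamma=LT_{[w]}$. By the first Lemma of \S\ref{Preli} the subgroup $L^+T_{[w]}\subset LG_\gamma$ then acts on $\Gr_\gamma$ and preserves every stratum $\Gr_{\gamma,\bf{O}}$.

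\textbf{Step 2 (The minimal stratum is closed).} Fix $\bf{O}\in\RT_{\min}(\gamma)$. Since the map $ev_\gamma$ has locally closed fibres and the closure order agrees with the dominance order, the closure of $\Gr_{\gamma,\bf{O}}$ inside $\Gr_\gamma$ is contained in $ev_\gamma^{-1}(\overline{\bf{O}})=\bigsqcup_{\bf{O}'\le\bf{O}}\Gr_{\gamma,\bf{O}'}$. The minimality of $\bf{O}$ in $\RT(\gamma)$ forces $\Gr_{\gamma,\bf{O}'}=\emptyset$ for every $\bf{O}'<\bf{O}$, so $\Gr_{\gamma,\bf{O}}$ is a non-empty closed subscheme of $\Gr_\gamma$. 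Combined with Kazhdan--Lusztig's finiteness theorem for affine Springer fibres at topologically nilpotent regular semisimple elements, and the fact that the free lattice $\Lambda_\gamma:=LG_\gamma/L^+T_{[w]}$ acts on $\Gr_\gamma$ with projective quotient preserving each stratum, we conclude that the image of $\Gr_{\gamma,\bf{O}}$ in $\Lambda_\gamma\backslash \Gr_\gamma$ is a closed subscheme of a projective scheme, hence projective.

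\textbf{Step 3 (Fixed point and main obstacle).} The parahoric $L^+T_{[w]}$ is a connected, pro-solvable pro-algebraic group, and its action on $\Gr_\gamma$ descends to the $\Lambda_\gamma$-quotient. The Borel fixed point theorem, in its pro-algebraic version for connected solvable groups acting on a proper scheme (proved via an inverse limit of Bialynicki--Birula decompositions along finite-dimensional quotients of $L^+T_{[w]}$), then produces a fixed point in the projective image of $\Gr_{\gamma,\bf{O}}$, which lifts to an $L^+T_{[w]}$-fixed point of $\Gr_{\gamma,\bf{O}}$ itself. By the definition of the skeleton this point lies in $\sX^{[w]}\cap\Gr_{\gamma,\bf{O}}$, as desired. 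The main technical obstacle is precisely this last fixed point step: one must verify that the pro-algebraic solvability of $L^+T_{[w]}$ suffices to guarantee fixed points on the descended stratum, i.e.\ that the torus part of $L^+T_{[w]}$ has compact attractors on $\Gr_\gamma$ and that its pro-unipotent radical acts through a compatible tower of finite-dimensional quotients on the finite-type scheme $\Gr_{\gamma,\bf{O}}$.
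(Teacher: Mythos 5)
The paper does not supply its own proof of this lemma: it is quoted verbatim from \cite[Lemma~7.3]{Yun21} and used as a black box, so there is nothing in the present text to compare your argument against. Evaluating it on its own, the overall strategy — conjugate so that $LG_\gamma=LT_{[w]}$, use minimality of $\bf O$ to make $\Gr_{\gamma,\bf O}$ a closed $\Lambda_\gamma$-invariant subset, descend to the projective lattice quotient, and apply Borel fixed point for the connected pro-solvable group $L^+T_{[w]}$ — is the right one and is in the spirit of Yun's original argument.

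Two points want tightening. First, the appeal to Bia{\l}ynicki--Birula is a red herring: BB decompositions concern $\mathbb{G}_m$-actions, whereas what you actually need is the ordinary Borel fixed point theorem for a connected solvable group acting on a projective variety, which applies to $L^+T_{[w]}$ once one notes that its action on the projective quotient necessarily factors through a finite-dimensional algebraic quotient (the automorphism group scheme of a projective scheme being of finite type). Second, the step ``which lifts to an $L^+T_{[w]}$-fixed point of $\Gr_{\gamma,\bf O}$ itself'' is not automatic and should be spelled out: choosing a lift $x$ of the fixed point $\bar x$, the rule $h\mapsto\lambda(h)$ determined by $h\cdot x=\lambda(h)\cdot x$ is a well-defined homomorphism from $L^+T_{[w]}$ to $\Lambda_\gamma$ (using that $LG_\gamma$ is abelian, so $\Lambda_\gamma$ and $L^+T_{[w]}$ commute, and that $\Lambda_\gamma$ acts freely), and any algebraic homomorphism from a connected pro-algebraic group to a discrete group is trivial; hence $x$ itself is fixed. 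With these repairs the argument is sound and, as far as I can tell, essentially reproduces Yun's proof.
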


For a Levi sugroup $M$, we have the following natural map:
\[
\iota^{M}:\{\text{nilpotent orbits in}\; M\}\rightarrow \{\text{nilpotent orbits in}\; G\}.
\]
 Now we recall the following useful lemma by Yun.
\begin{lemma}\cite[Lemma 3.2]{Yun21}\label{lem:Levi}
    If $\gamma\in L\mathfrak{m}$ with $M\subset G$ a Levi subgroup, then $\iota^{M}(\RT_{\min}^{M}(\gamma))=\RT_{\min}(\gamma)$. 
\end{lemma}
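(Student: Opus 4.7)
The plan is to prove the lemma via two containments, combined with a minimality argument. The key technical input is an Iwasawa-type decomposition of $LG$ relative to a parabolic with Levi factor $M$, and the Lusztig--Spaltenstein observation on nilpotent elements of parabolic subalgebras.

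\smallskip
\noindent\textbf{Step 1 (easy direction).} The inclusion $LM\hookrightarrow LG$ induces a morphism $\Gr_M\to \Gr_G$. Since $L^+\mathfrak{m}\subset L^+\mathfrak{g}$, it restricts to a morphism $\Gr_{M,\gamma}\to\Gr_{G,\gamma}$, and clearly $ev^G_\gamma|_{\Gr_{M,\gamma}} = \iota^M\circ ev^M_\gamma$. Therefore $\iota^M(\RT^M(\gamma))\subseteq \RT(\gamma)$.

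\smallskip
\noindent\textbf{Step 2 (key surjectivity up to closure).} I claim that for every $\bf{O}\in\RT(\gamma)$, there is $\bf{O}_M\in\RT^M(\gamma)$ with $\iota^M(\bf{O}_M)\leq \bf{O}$ in the closure order. Fix a parabolic $P=MN$ with Levi $M$ and unipotent radical $N$, and an opposite $\bar{P}=M\bar{N}$. Given $gL^+G\in \Gr_{G,\gamma,\bf{O}}$, use the Iwasawa-type decomposition of $LG$ relative to $P$ to write $g\equiv um\pmod{L^+G}$ with $u\in LN$ and $m\in LM$ (after possibly passing to a suitable semi-infinite cell). Since $\gamma\in L\mathfrak{m}$ and $LM$ normalizes $LN$, one computes
$$g^{-1}\gamma g = m^{-1}\gamma m + m^{-1}(u^{-1}\gamma u - \gamma)m \in L\mathfrak{m}\oplus L\mathfrak{n}.$$
Projecting along $\mathfrak{g} = \mathfrak{m}\oplus\mathfrak{n}\oplus\bar{\mathfrak{n}}$, the hypothesis $g^{-1}\gamma g\in L^+\mathfrak{g}$ forces each component into $L^+\mathfrak{m}\oplus L^+\mathfrak{n}$, so in particular $m^{-1}\gamma m\in L^+\mathfrak{m}$ and $mL^+M\in \Gr_{M,\gamma}$. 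Setting $\bf{O}_M := ev^M_\gamma(mL^+M)$, the reduction $(g^{-1}\gamma g)\bmod t$ is a nilpotent element of the parabolic subalgebra $\mathfrak{p}$ whose $\mathfrak{m}$-component lies in $\bf{O}_M$. By Lusztig--Spaltenstein theory, such an element lies in the $G$-saturation of the $M$-orbit of its $\mathfrak{m}$-component, i.e., $\iota^M(\bf{O}_M)\leq \bf{O}$.

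\smallskip
\noindent\textbf{Step 3 (deduction of the lemma).} For $\bf{O}\in\RT_{\min}(\gamma)$, Step 2 produces $\bf{O}_M\in\RT^M(\gamma)$ with $\iota^M(\bf{O}_M)\leq \bf{O}$, and minimality of $\bf{O}$ forces equality. Taking any minimal element $\bf{O}_M^0\leq \bf{O}_M$ of $\RT^M(\gamma)$, monotonicity of $\iota^M$ and minimality of $\bf{O}$ again give $\iota^M(\bf{O}_M^0)=\bf{O}$, hence $\bf{O}\in \iota^M(\RT_{\min}^M(\gamma))$. Conversely, for $\bf{O}_M\in\RT_{\min}^M(\gamma)$ and any $\bf{O}'\in\RT(\gamma)$ with $\bf{O}'\leq\iota^M(\bf{O}_M)$, pick a minimal $\bf{O}''\in\RT_{\min}(\gamma)$ with $\bf{O}''\leq\bf{O}'$; by what was just shown $\bf{O}'' = \iota^M(\bf{O}_M'')$ for some $\bf{O}_M''\in\RT_{\min}^M(\gamma)$, and then minimality of $\bf{O}_M$ in $\RT_{\min}^M(\gamma)$ together with the inequality $\iota^M(\bf{O}_M'')\leq\iota^M(\bf{O}_M)$ forces $\bf{O}''=\iota^M(\bf{O}_M)$, hence $\bf{O}'=\iota^M(\bf{O}_M)$.

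\smallskip
\noindent\textbf{Main obstacle.} The crux is Step 2: making the Iwasawa-type decomposition $g\equiv um \pmod{L^+G}$ precise for arbitrary $gL^+G\in\Gr_{G,\gamma}$ (a case analysis over semi-infinite cells), and verifying that the component-wise splitting of $g^{-1}\gamma g$ really produces $m\in\Gr_{M,\gamma}$. A secondary subtlety is the final minimality step in Step 3, which implicitly uses that the $G$- and $M$-closure orders on $\RT^M(\gamma)$ are compatible in the sense required; this must be verified using the specific structure of reduction sets for topologically nilpotent regular semisimple $\gamma$.
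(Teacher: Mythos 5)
The paper itself does not prove this lemma---it is cited verbatim from Yun's \cite{Yun21}---so there is no in-paper argument to compare against. Evaluating your proposal on its own merits: Steps 1 and 2 are correct. The Iwasawa decomposition $LG = LN\cdot LM\cdot L^+G$ does hold (no need to ``pass to a semi-infinite cell''), the projection $gL^+G\mapsto mL^+M$ is well-defined because $LP\cap L^+G = L^+N\cdot L^+M$, and the computation showing $m^{-1}\gamma m\in L^+\mathfrak m$ together with the degeneration $\iota^M(\mathbf O_M)\le \mathbf O$ (which is really just the $\mathbb G_m$-contraction by a central cocharacter of $M$, not Lusztig--Spaltenstein induction per se) is sound. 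These two steps prove, cleanly, that $\min\RT(\gamma)=\min\bigl(\iota^M(\RT^M(\gamma))\bigr)$, and hence the inclusion $\RT_{\min}(\gamma)\subseteq \iota^M\bigl(\RT^M_{\min}(\gamma)\bigr)$, which is all that the present paper ever uses (since $\RT^M_{\min}$ is always a singleton in its applications).

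However, the converse inclusion in Step~3 is not actually established, and the ``secondary subtlety'' you flag there is a genuine gap, not a loose end. To go from $\iota^M(\mathbf O_M'')\le \iota^M(\mathbf O_M)$ (for two minimal $\mathbf O_M,\mathbf O_M''\in\RT^M_{\min}(\gamma)$) to $\mathbf O_M''=\mathbf O_M$, you would need $\iota^M$ to reflect the closure order on these minimal elements, but the saturation map is not order-reflecting. Already for $G=\GL_6$, $M=\GL_3\times\GL_3$, the $M$-orbits $\bigl([3],[1,1,1]\bigr)$ and $\bigl([2,1],[2,1]\bigr)$ are incomparable in $\mathfrak m$, yet their saturations $[3,1,1,1]$ and $[2,2,1,1]$ are strictly comparable in $\mathfrak{gl}_6$. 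So $\iota^M$ can carry two incomparable minimal elements of $\RT^M(\gamma)$ to comparable $G$-orbits, in which case the larger one would fail to be minimal in $\RT(\gamma)$. Ruling this out requires an input beyond the Iwasawa computation---either some structural fact about $\RT^M(\gamma)$ that prevents this configuration, or (as in the inductive use made by the paper) the prior knowledge that $\RT^M_{\min}(\gamma)$ is a singleton, which reduces the lemma to the forward inclusion. As a free-standing proof of the equality asserted in the lemma, Step~3 is incomplete.
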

As a result, we have
\begin{corollary}\label{cor:elliptic cases}
    It suffices to consider the minimal reduction of such $\gamma$:
\begin{enumerate}
    \item For Type A, $\chi_{\gamma}(\lambda)$ is irreducible;
    \item For Type B, C, D, each irreducible factor $f_i(\lambda)$ of $\chi_{\gamma}$ is self-dual, i.e., $f_i(\lambda)=f_i(-\lambda)$.
\end{enumerate}
\end{corollary}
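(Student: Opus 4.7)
The plan is to deduce the corollary directly from Lemma~\ref{lem:Levi} by exhibiting $\gamma$ inside a proper Levi subgroup of $G$ over $\sK$ whenever the stated hypothesis on $\chi_\gamma$ fails. In each case the Levi will be read off from a factorization of the ambient vector space (together with its form, in types B/C/D) into $\gamma$-stable pieces prescribed by the irreducible factorization of $\chi_\gamma$ over $\sK$.

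For type A, I would first factor $\chi_\gamma(\lambda) = \prod_{i=1}^{r} f_i(\lambda)$ into monic irreducibles in $\sK[\lambda]$. Regular semisimplicity of $\gamma$ makes $\chi_\gamma$ separable, hence the $f_i$ are pairwise coprime, and $V = \sK^N$ splits canonically as $V = \bigoplus_i V_i$ with $V_i = \ker f_i(\gamma)$ and $\dim V_i = \deg f_i$. Whenever $r \ge 2$ this exhibits $\gamma$ as an element of the proper Levi $M = \prod_i \GL(V_i) \subsetneq \GL(V)$, and Lemma~\ref{lem:Levi} reduces the computation of $\RT_{\min}(\gamma)$ to each factor, where by construction the characteristic polynomial is irreducible.

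For types B, C, D, I would exploit the anti-self-adjointness relation $g(\gamma u, v) = -g(u, \gamma v)$, which extends to $g(p(\gamma)u, v) = g(u, p(-\gamma)v)$ for every polynomial $p$; in particular the multiset of monic irreducible factors of $\chi_\gamma$ is stable under the involution $f(\lambda) \leftrightarrow (-1)^{\deg f} f(-\lambda)$. Suppose some monic irreducible factor $f$ is not self-dual, so $f(\lambda)$ and its dual are distinct coprime factors of $\chi_\gamma$; set $V_f = \ker f(\gamma)$ and $V_{f^*} = \ker f(-\gamma)$. Applying the polynomial identity I can then verify that $V_f$ and $V_{f^*}$ are each isotropic, since $f(-\gamma)|_{V_f}$ is invertible (its characteristic polynomial on $V_f$ being coprime to $f(\lambda)$), while the induced pairing $V_f \times V_{f^*} \to \sK$ is non-degenerate. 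Hence $V_f \oplus V_{f^*}$ is a $\gamma$-stable hyperbolic summand, and $\gamma$ restricted to it lies in the Levi $\GL(V_f) \hookrightarrow \Sp(V_f \oplus V_{f^*})$ or $\SO(V_f \oplus V_{f^*})$. Iteratively peeling off all such hyperbolic pairs and invoking Lemma~\ref{lem:Levi} reduces to the case where every irreducible factor of $\chi_\gamma$ is self-dual in the stated sense; the residual possibility $f(\lambda) = -f(-\lambda)$ forces $f = \lambda$, which is excluded either by the definition of $\chi_\gamma$ in the $\SO_{2n+1}$ case or by regular semisimplicity of $\gamma$ in types $\Sp_{2n}$ and $\SO_{2n}$.

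The main step to verify is that in types B/C/D the hyperbolic peeling yields a Levi genuinely defined over $\sK$. This comes down to two facts: the $\sK$-rationality of the kernels $V_f$, which is automatic since $f \in \sK[\lambda]$; and the non-degeneracy of the induced pairing between $V_f$ and $V_{f^*}$, which follows directly from the anti-self-adjointness identity together with non-degeneracy of $g$. No integrality or parahoric issue arises, since the entire argument takes place with $\sK$-points of $G$.
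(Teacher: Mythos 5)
Your proposal is correct and follows essentially the same route as the paper: the corollary is presented there as an immediate consequence of Lemma~\ref{lem:Levi}, and the paper (in the Type~C discussion) likewise pairs non-self-dual irreducible factors into $\gamma$-stable hyperbolic summands to exhibit $\gamma$ inside a proper Levi of the form $\prod \GL \times (\text{smaller classical factor})$. The only implicit step you do not spell out — which the paper also leaves tacit — is that the Levi produced over $\sK$ must be transported to one of $G$ over $\CC$, which follows by combining Lemma~\ref{lem:congjuation invriance of RT} and Lemma~\ref{lem:single conj class} (i.e.\ Corollary~\ref{cor:only depends on char}) to replace $\gamma$ by an $LG$-conjugate lying in a standard Levi.
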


\subsubsection{Type A}

For a given $\gamma\in L\gt_{[w]}\cap L^{\heartsuit}\g$, suppose that its characteristic polynomial can be decomposed into irreducible factors:
\[
f=\prod_{i} f_i
\]
Each irreducible $f_i\in\sK[\lambda]$ defines an extension $E_i$ of $\sK$ with with $[E_i:\sK]=n_i$. 

\begin{lemma}\label{lem:m power}
    Let $J$ be the regular Jordan normal form of size $n$. Then the partition corresponding to $J^m$ is the $m$-balanced partition of $n$.
\end{lemma}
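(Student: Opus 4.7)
The plan is to write down $J$ in a basis where the Jordan chain structure of $J^m$ can be read off directly. Take $V=\mathbb{C}^n$ with a basis $e_1,\ldots,e_n$ chosen so that $Je_i=e_{i-1}$ for $i\ge 2$ and $Je_1=0$; this realizes $J$ as the regular nilpotent of size $n$. (If the paper's convention makes $J$ regular with a possibly nonzero eigenvalue $\lambda$, then $J^m-\lambda^m I$ differs from $m\lambda^{m-1}(J-\lambda I)$ by a polynomial in $(J-\lambda I)$ whose constant term is $1$, hence is a unit; this reduces the problem to the nilpotent case.) With this basis one has $J^m e_i=e_{i-m}$ whenever $i>m$ and $J^m e_i=0$ otherwise.

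Next, decompose $V$ according to residues modulo $m$: for $1\le j\le m$ set
\[
V_j=\operatorname{span}\bigl(e_j,\,e_{j+m},\,e_{j+2m},\,\ldots\bigr),
\]
so that $V=V_1\oplus\cdots\oplus V_m$ and each $V_j$ is preserved by $J^m$. On $V_j$, the operator $J^m$ sends each listed basis vector to the previous one and kills $e_j$, so $J^m|_{V_j}$ is a single nilpotent Jordan block of size $\dim V_j$, namely the number of integers $s\ge 0$ with $j+sm\le n$.

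Writing $n=mk+l$ with $0\le l<m$, the inequality $j+sm\le n$ has largest solution $s=k$ when $1\le j\le l$ and $s=k-1$ when $l<j\le m$. Hence $\dim V_j=k+1$ for exactly $l$ values of $j$ and $\dim V_j=k$ for the remaining $m-l$ values, so the Jordan type of $J^m$ is
\[
[(k+1)^{l},\,k^{m-l}],
\]
which is by Definition~\ref{def:balanced} the $m$-balanced partition of $n$. The argument is entirely combinatorial; the only place to be careful is the degenerate case $l=0$, where one must check that no block of size $k+1$ appears and the partition reduces to $[k^{m}]$, consistent with the decomposition above.
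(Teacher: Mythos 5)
Your main argument is correct and mirrors the paper's: take the basis with $Je_i=e_{i-1}$, note $J^m e_i=e_{i-m}$, and count block lengths by residue of $i$ modulo $m$ (the paper writes down the first two steps and leaves the residue-class count implicit). One caveat on your parenthetical about a nonzero eigenvalue $\lambda$: your own computation shows $J^m-\lambda^m I$ then has the Jordan type of $J-\lambda I$, i.e.\ the single block $[n]$, which is \emph{not} the $m$-balanced partition in general, so that case does not ``reduce to'' the nilpotent one in a way that preserves the conclusion; the lemma is genuinely about nilpotent $J$, which is what the paper intends (in the application $J$ is the uniformizer $\pi_i$ acting on $\Lambda/t\Lambda$, hence nilpotent).
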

\begin{proof}
    Let $V$ be $n$-dimensional vector space, with basis $\{e_i\}$ such that:
    \[
    Je_{i}=e_{i-1}.
    \]
    Then $J^me_{i}=e_{i-m}$. The lemma follows.
\end{proof}

\begin{proposition}\label{prop:A}
    Suppose that each constant term of $f_i$ has evaluation (with respect to $\sK$) $m_i$. Then the partition of $\RT^{\mathrm{A}}_{\min}(\gamma)$ is the collection of $m_i$-balanced partition of $n_i$.
\end{proposition}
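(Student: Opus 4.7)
The plan is to combine Yun's skeleton lemma with an explicit computation on a natural lattice, after reducing to the elliptic case via the Levi decomposition. Applying Lemma~\ref{lem:Levi} blockwise together with Corollary~\ref{cor:elliptic cases}, we may assume $\chi_\gamma = f$ is irreducible of degree $n$ with constant term of valuation $m$; the general statement then follows by taking the disjoint union of the blockwise partitions.

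With this reduction, set $E := \sK[\lambda]/f(\lambda)$. Since $\mathbb{C}$ is algebraically closed, any finite extension of $\sK = \mathbb{C}(\!(t)\!)$ is totally ramified, so $E/\sK$ has ramification index $n$: if $\pi$ is a uniformizer then $\sO_E \simeq \mathbb{C}[\![\pi]\!]$ and $t = u\pi^n$ for some $u \in \sO_E^\times$. Identify $V := \sK^n$ with $E$ as a $\sK$-vector space, so that $\gamma$ acts as multiplication by the class $\alpha$ of $\lambda$; the hypothesis on the constant term of $f$ translates to $v_E(\alpha)=m$, and one may write $\alpha = v\pi^m$ with $v\in \sO_E^\times$.

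The central construction is a natural skeleton point. The maximal torus $LT_{[w]} \subset L\GL_n$ attached to the class $[w]$ cut out by $f$ is canonically $\Res_{E/\sK}\GG_m$, and its solvable parahoric $L^+T_{[w]}$ equals $\sO_E^\times$. Hence the lattice $\sO_E \subset E = V$ is preserved by $L^+T_{[w]}$, defining a point $x_0 \in \sX^{[w]}$; it is also preserved by $\alpha$ since $m \geq 1$, so $x_0 \in \Gr_\gamma$. By \cite[Lemma~7.3]{Yun21} the evaluation $ev_\gamma(x_0)$ lies in $\RT_{\min}(\gamma)$. To compute it, use the $\mathbb{C}$-basis $\{1,\pi,\ldots,\pi^{n-1}\}$ of $\sO_E/t\sO_E$; since $\pi^n \equiv 0 \pmod{t\sO_E}$, the action of $\alpha$ sends $\pi^j$ to $v\pi^{m+j}$ when $m+j<n$ and to $0$ otherwise, so $\alpha \pmod t$ is conjugate to $J^m$, where $J$ is the regular nilpotent Jordan block of size $n$.

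By Lemma~\ref{lem:m power}, the Jordan type of $J^m$ is the $m$-balanced partition of $n$, which matches the combinatorial definition of $\RT^{\mathrm{A}}_{\min}(\gamma)$. Since Conjecture~\ref{conj of Yun} is already established in type~A by Yun~\cite{Yun21}, the minimal reduction is a single orbit and must coincide with our computed one. The only step requiring some care is the canonical identification $L^+T_{[w]}\simeq \sO_E^\times$, ensuring that $\sO_E$ genuinely defines a fixed point of $L^+T_{[w]}$ on $\Gr$; this follows from the standard dictionary between rational maximal tori in $L\GL_n$ and $\sK$-étale algebras of dimension $n$, so no deeper obstacle is expected.
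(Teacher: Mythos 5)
Your proposal is correct and takes essentially the same approach as the paper: reduce to the irreducible case via Lemma~\ref{lem:Levi}, take the skeleton lattice $\sO_E$ on which $\gamma$ acts as a unit times $\pi^m$, and invoke Lemma~\ref{lem:m power} together with Yun's Lemma~7.3. One small remark: the closing appeal to Yun's already-established uniqueness in type~A is not actually needed, since in the irreducible case all skeleton lattices are of the form $\pi^a\sO_E$ and so give the same reduction, which forces $\RT_{\min}(\gamma)$ to be a singleton directly from Lemma~7.3.
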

\begin{proof}
     Firstly, recall that the skeleton $\sX^{[w]}$ is the set of $\oplus\sO_{E_i}$-submodules of $\oplus E_i$. It is simple to check that:
    \[
    \ord_{\sO_{E_i}}\lambda=m_i.
    \]
     Hence $\lambda|_{\sO_{E_i}}= \pi^m_{i}\phi_i(\pi_i)$ where $\phi(\pi_i)$ is a unit in $\sO_{E_i}$. Notice that $\pi_i$ has partition $[n_i]$ acting on $\Lambda/t\Lambda$. Then the result follows from Lemma \ref{lem:m power} and Lemma \ref{lem:Levi}.
\end{proof}
\subsubsection{Type C}For a given $\gamma\in LT_{[w]}\cap L^{\heartsuit}\g$, suppose that its characteristic polynomial can be decomposed into irreducible factors:
\[
f=\prod_{i} f_i
\]

\begin{enumerate}
    \item $f_i(\lambda)=f_i(-\lambda)$, i.e., degree $f_i$ is even and we say it is self-dual. 
    \item For pair of $f_i, f_{i'}$ with $f_{i}(\lambda)=(-1)^{\deg f_i}f_{i'}(-\lambda)$.
\end{enumerate}
For each pair of irreducible factors (i.e., the second case), we can find a Levi subalgebra $\mathfrak{m}$ such that $\gamma\in L\mathfrak{m}\cap L^{\heartsuit}\g$. Notice that the Levi subalgebra of $\g$ are products of type A with a single type C. By Lemma \ref{lem:Levi}, it suffices to consider that all irreducible factors of $f$ are self-dual.

Hence, from $\gamma$, we can construct a degree 2n extension of $\sK$:
\[
E=\oplus E_i
\]
with $[E_i:F_i]=2$, $[F_i:\sK]=n_i$. $E$ is endowed with a natural symplectic pairing, for each $i$, we denote by $\pi_i$ uniformizer and $\mathfrak{d}_i$ generator of the different ideal. Then we define a symplectic pairing:
\[
    \omega_i(x,y)=\Tr_{F_i/\sK}\left(\mathfrak{d}_i\frac{x\sigma(y)-y\sigma(x)}{\pi_i}\right)
\]
which restricts to a non-degenerate symplectic pairing on $\oplus \sO_{E_i}$. The $\mathbb{C}$-points of $\Gr_{G}$ are self-dual $\sO$-lattices in $E$.

\begin{proposition}\label{prop:C}
    Suppose that each constant term of $f_i$ has evaluation (with respect to $\sK$) $m_i$. Then the partition of $\RT^{\mathrm{C}}_{\min}(\gamma)$ is the collection of $m_i$-balanced partition of $2n_i$.
\end{proposition}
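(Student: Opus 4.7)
The plan is to exhibit an explicit self-dual lattice $\Lambda$ lying in the skeleton $\sX^{[w]}$ whose image under $ev_\gamma$ realizes the $m_i$-balanced collection, and then to deduce uniqueness and minimality by comparison with type~A through Proposition~\ref{prop:A}.

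First, since the residue field of $\sK$ is $\mathbb{C}$, every $E_i/\sK$ is totally ramified of degree $2n_i$, so $\sO_{E_i}\cong \mathbb{C}[\![\pi_i]\!]$ and $t = u_i\pi_i^{2n_i}$ for some unit $u_i$. The identity $c_i = \pm N_{E_i/\sK}(\lambda)$ together with the formula $v_\sK\circ N_{E_i/\sK} = v_{E_i}$ (valid in the totally ramified case) forces $v_{E_i}(\lambda) = m_i$, so $\lambda = \pi_i^{m_i}\phi_i(\pi_i)$ for some unit $\phi_i\in \sO_{E_i}^\times$. Since $E_i/F_i$ is a ramified quadratic extension (forced by total ramification of $E_i/\sK$), one can choose $\pi_i$ anti-invariant under $\sigma$, i.e.\ $\sigma(\pi_i)=-\pi_i$. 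The candidate lattice is $\Lambda := \oplus_i \sO_{E_i}\subset E$; it is visibly preserved by $L^+T_{[w]} = \prod_i\sO_{E_i}^\times$ and so lies in the skeleton $\sX^{[w]}$ provided it is self-dual.

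The main technical step is self-duality of $\Lambda$ with respect to $\omega = \oplus_i \omega_i$. With the sign-reversing $\pi_i$, the quantity $(x\sigma(y)-y\sigma(x))/\pi_i$ lies in $\sO_{F_i}$ for $x,y\in \sO_{E_i}$, so $\omega_i(x,y) = \Tr_{F_i/\sK}(\mathfrak{d}_i\cdot z)\in\sO$ by the defining property of the different; a rank count, using nondegeneracy of $\omega_i$ over $E_i$, upgrades the inclusion $\Lambda\subset\Lambda^\vee$ to equality. On the quotient $\Lambda/t\Lambda \cong \oplus_i \mathbb{C}[\pi_i]/(\pi_i^{2n_i})$, $\gamma$ acts on each summand as multiplication by $\pi_i^{m_i}\phi_i(\pi_i)$; since $\phi_i$ is a unit commuting with $\pi_i$, this has the same Jordan type as multiplication by $\pi_i^{m_i}$, which by Lemma~\ref{lem:m power} is the $m_i$-balanced partition of $2n_i$. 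An elementary parity check (for $2n_i = m_ik+\ell$ with $0\leq \ell<m_i$, odd-part multiplicities come out even either way) confirms that the concatenated partition is of type~C, so this collection lies in $\RT^{\mathrm{C}}(\gamma)$.

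For uniqueness and minimality, the closed embedding $\Sp_{2n}\hookrightarrow\GL_{2n}$ induces $\Gr^{\Sp}_\gamma\hookrightarrow\Gr^{\GL}_\gamma$ compatibly with $ev_\gamma$, so $\RT^{\mathrm{C}}(\gamma)$ is contained in $\RT^{\mathrm{A}}(\gamma)$ as a set of partitions of $2n$. By Proposition~\ref{prop:A}, every element of $\RT^{\mathrm{A}}(\gamma)$ dominates the $m_i$-balanced collection, which the previous paragraph has placed in $\RT^{\mathrm{C}}(\gamma)$; hence this collection is the unique minimum of $\RT^{\mathrm{C}}(\gamma)$. The main obstacle is the self-duality verification for $\Lambda$, since it demands careful tracking of the different ideal $\mathfrak{d}_i$ and the $\sigma$-action on uniformizers; once this is in place everything else reduces to a commuting-operators computation and a formal type-A comparison.
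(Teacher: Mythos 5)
Your argument follows the paper in constructing the lattice $\Lambda=\oplus\sO_{E_i}$ and computing its reduction via Lemma~\ref{lem:m power}, but it replaces the paper's appeal to Yun's skeleton result (Lemma~7.3, that $\RT_{\min}$ is achieved on the skeleton, combined with the fact that $\sX^{[w]}$ is a singleton) with a direct comparison to type~A: since $\Gr^{\Sp}_\gamma\subset\Gr^{\GL}_\gamma$ compatibly with the evaluation maps, $\RT^{\mathrm{C}}(\gamma)\subset\RT^{\mathrm{A}}(\gamma)$, every element of which by Proposition~\ref{prop:A} dominates the balanced collection; once that collection is placed in $\RT^{\mathrm{C}}(\gamma)$ it is the minimum. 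This comparison idea is cleaner and makes the skeleton reference in your proof inessential (you only need some self-dual $\gamma$-stable lattice with balanced reduction). The trade-off is that it inherits the full force of Proposition~\ref{prop:A}, which in the paper is itself proved via skeletons, so the two routes are not logically independent.

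The genuine gap is in the self-duality of $\Lambda$. You establish $\Lambda\subset\Lambda^\vee$ (integrality of $\omega_i$ on $\sO_{E_i}$) and then assert that a ``rank count, using nondegeneracy of $\omega_i$ over $E_i$, upgrades the inclusion to equality.'' This is false as a matter of lattice theory: two free $\sO$-lattices of the same rank, one contained in the other, can be strictly nested (e.g.\ $t\sO^{2n}\subsetneq\sO^{2n}$), and nondegeneracy over the fraction field is exactly what makes $\Lambda^\vee$ a full-rank lattice, not what forces it to equal $\Lambda$. To close the gap one must show $\ord_t\det(\omega_i|_{\sO_{E_i}})=0$. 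Concretely this amounts to checking that the dual lattice of $\sO_{E_i}$ under $\omega_i$ is $\sO_{E_i}$ itself, which reduces (after unwinding $\omega_i(x,y)=\Tr_{E_i/\sK}\bigl(\tfrac{\mathfrak{d}_i}{\pi_i}\,x\sigma(y)\bigr)$) to the identity $\frac{\mathfrak d_i}{\pi_i}\cdot\mathfrak D_{E_i/\sK}=\sO_{E_i}$. This in turn requires $\mathfrak d_i$ to generate the \emph{inverse} different $\mathfrak D_{F_i/\sK}^{-1}$ and uses, crucially, that $E_i/F_i$ is tamely ramified quadratic over a field of residue characteristic $0$, so that $\mathfrak D_{E_i/F_i}=(\pi_i)$ cancels exactly the $\pi_i^{-1}$ introduced in the formula. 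Without this discriminant computation (or an equivalent perfect-pairing argument on $\Lambda/t\Lambda$), the self-duality claim is unsupported.
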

\begin{proof}
    Firstly, recall that the skeleton $\sX^{[w]}$ is a singleton $\oplus\sO_{E_i}$, see Yun\cite{Yun21}. Since the nondegenerate pairing on the lattice is given by $\oplus\omega_i$, it suffices to work out the minimal reduction for each $\sO_{E_i}$ factors. It is simple to check that:
    \[
    \ord_{\sO_{E_i}}\lambda=m_i.
    \]
    Hence $\lambda|_{\sO_{E_i}}= \pi^m_{i}\phi_i(\pi_i)$ where $\phi(\pi_i)$ is a unit in $\sO_{E_i}$. Notice that $\pi_i$ has partition $[2n_i]$ acting on $\sO_{E_i}/t\sO_{E_i}$. Then the result follows from Lemma \ref{lem:m power} and Lemma \ref{lem:Levi}.
\end{proof}

\subsection{Construction via Newton Polygon}

Let $\g=\mathfrak{gl}_n$ or $\mathfrak{sp}_{2n}$. Given $\gamma_0\in L^{\heartsuit}\g$, we write $f(\lambda)=\chi_{\gamma_0}(\lambda)$. We decompose it into  factors whose Newton polygon has a single slope:\footnote{For our explicit conjugation, we do not need each part to be irreducible.}
\[
f=\prod _{i=1}^{r}f_i
\]
i.e., the associated Newton polygon $\NP_i$ has a single slope, and the slopes are different for each $\NP_i$.
We can check that $\operatorname{R}(f)$ is conjugate to $\mathrm{diag}\{(\operatorname{R}(f_1),\ldots, \operatorname{R}(f_{r})\}$ which lies in the loop algebra of a Levi subalgebra.

\subsubsection{Type A}

By Corollary \ref{cor:only depends on char}, Lemma \ref{decomposition wrt NP} and Lemma \ref{lem:Levi} , in type~A we may reduce to the case where $f$ is irreducible; in particular, $f$ has a single slope.

Now, assume that $f=\lambda^n+a_1\lambda^{n-1}+\ldots +a_{n}$ with $\ord_ta_n=m$. Since it has a single slope, we have
\[
\ord_t a_i \ge \left\lceil \tfrac{i \cdot m}{n} \right\rceil.
\]

To prove Proposition \ref{prop:A}, it suffices to establish the following statement.

\begin{proposition}\label{prop:minimal is balanced}
    Assume $\chi_{\gamma}(\lambda)=f$ as above. Then $\RT^{\mathrm{A}}_{\min}(\gamma)$ is the associated balanced partition $[(k+1)^{l}, k^{m-l}]$.
\end{proposition}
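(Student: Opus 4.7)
The plan is to prove Proposition~\ref{prop:minimal is balanced} in two steps: (i) exhibit a $\gamma$-stable $\sO$-lattice whose reduction is the balanced partition, and (ii) show that any achievable reduction dominates it. By Corollary~\ref{cor:only depends on char} we may take $\gamma=R(f)$ acting on $\sK^n$ with cyclic basis $e_1,\ldots,e_n$ satisfying $\gamma e_i=e_{i+1}$.

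For step (i), I would identify $\sK^n$ with $E:=\sK[\lambda]/(f)$ via $e_i\mapsto\alpha^{i-1}$, where $\alpha$ is the image of $\lambda$. Irreducibility of $f$ together with the single-slope condition $m/n$ forces $E/\sK$ to be totally ramified of degree $n$ with $\gcd(m,n)=1$. Fix a uniformizer $\pi_E$ of $\sO_E$, so that $t=\pi_E^n u_1$ and $\alpha=\pi_E^m u_2$ for some units $u_1,u_2\in\sO_E^\times$. The lattice $\Lambda:=\sO_E\subset E=\sK^n$ is $\alpha$-stable, and on $\Lambda/t\Lambda=\sO_E/\pi_E^n$ the action of $\alpha$ has the same Jordan type as $\pi_E^m$, since $u_2$ is invertible modulo $\pi_E^n$. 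As $\pi_E$ itself acts as a regular nilpotent of size $n$ on this quotient, Lemma~\ref{lem:m power} gives that $\pi_E^m$ has Jordan type equal to the $m$-balanced partition $[(k+1)^l,k^{m-l}]$ of $n$. Translating back along the identification $e_i\leftrightarrow\alpha^{i-1}$ produces an explicit $P\in GL_n(\sK)$ with $P^{-1}R(f)P\in L^+\mathfrak{gl}_n$ having the desired reduction, proving existence.

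For step (ii), the conjugate of $[(k+1)^l,k^{m-l}]$ is $[m^k,l]$, whose $j$-th partial sum equals $\min(jm,n)$. Since for any nilpotent $\bar\gamma$ of Jordan type $\bf d$ one has $\sum_{i\le j}\bf d^T_i=\dim\ker\bar\gamma^j$, it suffices to prove
\[
\dim\ker\bar\gamma^j\le\min(jm,n)
\]
for every $\gamma$-stable lattice $\Lambda$ and every $j\ge 1$. Applying Smith normal form to the $\sO$-linear endomorphism $\gamma^j\colon\Lambda\to\Lambda$ yields elementary divisors $t^{d_1},\ldots,t^{d_n}$. Since $\det\gamma=\pm a_n$ has $t$-valuation $m$, the sum $\sum_i d_i=\ord_t\det\gamma^j=jm$. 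Therefore $\dim\ker\bar\gamma^j=\#\{i:d_i\ge 1\}\le\min\bigl(\sum_i d_i,\ n\bigr)=\min(jm,n)$, which gives the bound and hence minimality.

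The main technical point lies in step (i): concretely writing $\sO_E$ inside $\sK^n\cong E$ requires, via a Bezout relation $am+bn=1$, expressing $\pi_E$ as a monomial in $\alpha$ and $t$ times a unit, and then expanding to identify the basis of the lattice in the cyclic basis. This explicit conjugation is the template that, with the additional work of constructing compatible nondegenerate pairings as outlined in Subsection~\ref{subsec:preparation modify pairing}, will drive the type~C and type~B/D arguments in the subsequent sections.
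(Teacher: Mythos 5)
Your argument is essentially correct and takes a genuinely different route from the paper, but it contains one factual error worth flagging.

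\textbf{What you do differently.}
For existence you do not construct the explicit diagonal conjugator $P$ that the paper uses; instead you invoke the field extension $E=\sK[\lambda]/(f)$ and take $\Lambda=\sO_E$, which is exactly the proof of Proposition~\ref{prop:A} via the skeleton. Since the whole point of this subsection is to present an \emph{alternative}, Newton-polygon-flavored construction that generalizes to types~B/D, your step~(i) is not really an alternative but a re-derivation of the earlier skeleton argument. On the other hand, your step~(ii) — bounding $\dim\ker\bar\gamma^j$ by the number of non-unit elementary divisors of $\gamma^j:\Lambda\to\Lambda$ and using $\sum_i d_i=\ord_t\det\gamma^j=jm$ — is a clean, fully elementary argument that any achievable reduction dominates the balanced partition. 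The paper dispenses with minimality in a single sentence (``for any partition smaller than the balanced one, the generic Newton polygon is higher than $\NP(f)$, which is impossible''), so your Smith normal form argument is a genuine improvement in rigor and would be a worthwhile addition.

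\textbf{The error.}
You assert that irreducibility of $f$ together with the single-slope condition forces $\gcd(m,n)=1$. This is false. Take $s$ with $s^4=t$ and $\lambda=s^2+s^3$; then $\lambda$ generates $\CC(\!(s)\!)$ over $\sK$ (since $\lambda^2/t-1=2s+s^2$ is a uniformizer), so its minimal polynomial
\[
f(\lambda)=\lambda^4-2t\lambda^2-4t^2\lambda+t^2-t^3
\]
is irreducible of degree $n=4$ with $\ord_t f(0)=m=2$, yet $\gcd(m,n)=2$. Fortunately this does not harm your main argument: the identity $\alpha=\pi_E^m u_2$ with $u_2\in\sO_E^\times$ holds for any $m$, and Lemma~\ref{lem:m power} likewise does not need $\gcd(m,n)=1$, so the Jordan type of $\alpha$ on $\sO_E/\pi_E^n$ is still the $m$-balanced partition of $n$. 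What does break is your concluding remark that one can recover the explicit $P$ via a Bezout relation $am+bn=1$ expressing $\pi_E$ as a monomial in $\alpha$ and $t$ times a unit; this is not available when $\gcd(m,n)>1$, so your proposal does not actually reconstruct the paper's diagonal $P$ (which works regardless of $\gcd(m,n)$).
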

\begin{proof}
    We may take $\gamma=\operatorname{R}(f)$. We define
    \[
        P=\mathrm{diag}\left(\{t^{m-1}\}^{k}, \cdots , \{t^{l}\}^{k}, \{t^{l-1}\}^{k+1}, \cdots , \{t\}^{k+1}, \{1\}^{k+1}\right).
    \]
   here $\{t^{i}\}^{k}$ means $t^i$ appears $k$ many times. By $\ord a_i \ge \left\lceil \tfrac{i \cdot m}{n} \right\rceil$, we can check that:
    \[
        P^{-1}\gamma P
    \]
    lies in $L^+\g$ and has reduction $[(k+1)^{l}, k^{m-l}]$.

    Conversely, for any partition smaller than the balanced one, the generic Newton polygon is higher than $\NP(f)$ which is impossible.
\end{proof}

\subsubsection{Type C}

Assume $\g=\mathfrak{sp}_{2n}$. Then $f(\lambda)=f(-\lambda)$ and $f_{i}(\lambda)=f_{i}(-\lambda)$. Write
\[
    \deg f_i=2n_i,\qquad \ord_t(f_i(0))=m_i,\qquad 2n_i=m_i k_i+l_i,\ \ 0\le l_i<m_i.
\]
Let $\gamma_i = R(f_i)$. We treat each factor separately, and for notational simplicity, suppress the index $i$ in the construction below.

Let $P=\mathrm{diag}(t^{p_1},\ldots,t^{p_{2n}})$, where
\[
    p_{j}=
    \begin{cases}
    m-\left\lceil\frac{j m}{2n}\right\rceil, & 1\le j\le n,\\[4pt]
    \left\lceil\frac{(2n+1-j)m}{2n}\right\rceil-1, & n+1\le j\le 2n.
\end{cases}
\]
In particular,
\begin{equation}\label{eq:symmetry}
    p_{j}+p_{2n+1-j}=m-1.
\end{equation}
\begin{lemma}\label{lem:integrality of modified gamma C}
    $P^{-1}\gamma P$ is integral.
\end{lemma}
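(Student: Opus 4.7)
The plan is to exploit the fact that the companion matrix $\gamma=R(f)$ has only two sorts of nonzero entries: the subdiagonal entries $\gamma_{i+1,i}=1$ for $1\leq i\leq 2n-1$, and the last-column entries $\gamma_{i,2n}=-a_{2n-i+1}$ for $1\leq i\leq 2n$. Since conjugation by the diagonal matrix $P=\operatorname{diag}(t^{p_1},\ldots,t^{p_{2n}})$ multiplies the $(i,j)$ entry by $t^{p_j-p_i}$, the integrality of $P^{-1}\gamma P$ reduces to two families of inequalities:
\[
p_i-p_{i+1}\geq 0\qquad\text{and}\qquad \ord_t(a_{2n-i+1})+p_{2n}-p_i\geq 0.
\]

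First I would verify that $(p_j)_{j=1}^{2n}$ is nonincreasing. On each of the intervals $1\leq j\leq n$ and $n+1\leq j\leq 2n$ this is immediate from the monotonicity of $j\mapsto \lceil jm/(2n)\rceil$ and $j\mapsto -\lceil(2n+1-j)m/(2n)\rceil$, respectively. At the midpoint, a direct computation gives $p_n=\lfloor m/2\rfloor$ and $p_{n+1}=\lceil m/2\rceil-1$, so $p_n\geq p_{n+1}$ in both parities of $m$ (with equality exactly when $m$ is odd).

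For the last-column inequality, I would set $k=2n-i+1$ and split into two cases according to whether $2n-k+1$ lies in $[1,n]$ or $[n+1,2n]$. In the case $k\leq n$, the formula for $p_{2n-k+1}$ gives $p_{2n-k+1}-p_{2n}=\lceil km/(2n)\rceil-\lceil m/(2n)\rceil$, which is dominated by $\ord_t(a_k)\geq\lceil km/(2n)\rceil$ from the single-slope hypothesis. In the case $k\geq n+1$, the required bound rearranges to
\[
\ord_t(a_k)+\bigl\lceil(2n-k+1)m/(2n)\bigr\rceil+\bigl\lceil m/(2n)\bigr\rceil\geq m+1,
\]
which follows by applying superadditivity of the ceiling function to $km/(2n)$, $(2n-k+1)m/(2n)$, and $m/(2n)$, whose sum equals $m+m/n>m$, and then invoking once more the single-slope estimate $\ord_t(a_k)\geq\lceil km/(2n)\rceil$.

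The verification is essentially a mechanical bookkeeping with ceiling functions, so I do not anticipate a genuine obstacle. The only point demanding care is the match at the symmetric midpoint $j\in\{n,n+1\}$, which is forced by the symmetry relation~\eqref{eq:symmetry}; this same symmetry, although not needed for integrality per se, will later underwrite the symplectic compatibility of $P^{-1}\gamma P$ on which the rest of the type~C construction depends.
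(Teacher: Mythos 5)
Your proof is correct and takes essentially the same route as the paper's: identify the two nonzero bands of the companion matrix (subdiagonal and last column), verify monotonicity of the exponent sequence $(p_j)$, and reduce the last-column bound to the single-slope estimate $\ord_t a_k \ge \lceil km/(2n)\rceil$ combined with superadditivity of the ceiling function. The paper compresses this to the single inequality $p_j\le\ord_t a_{2n+1-j}-1$ and rewrites it via the identity $\lceil(2n+1-j)m/(2n)\rceil = m-\lfloor(j-1)m/(2n)\rfloor$; your explicit split at $j\le n$ versus $j\ge n+1$ and at the midpoint $j\in\{n,n+1\}$ makes the bookkeeping slightly more transparent but is the same argument.
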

\begin{proof}
    First, we can verify that, for $1\le j\le 2n$, $p_{j}\le \ord_t a_{2n+1-j}-1$, i.e.,
    \[
    m-\left\lceil\frac{jm}{2n}\right\rceil \le \left\lceil\frac{(2n+1-j)m}{2n}\right\rceil-1.
    \]
    It holds since $\left\lceil\frac{jm}{2n}\right\rceil-\left\lfloor\frac{(j-1)m}{2n}\right\rfloor\ge 1$. Hence $P^{-1}\gamma P$ is integral and all elements of $\gamma(0)$ are 0 except possibly those $\gamma(0)_{i,i-1}, 2\le i\le n$.
\end{proof}
\begin{lemma}\label{lem:partition of modified gamma C}
     The reduction $(P^{-1}\gamma P)(0)$ has partition $[(k+1)^{l},k^{m-l}]$.
\end{lemma}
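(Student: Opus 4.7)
The plan is to compute $(P^{-1}\gamma P)(0)$ explicitly and read off its Jordan type. Since $\gamma=R(f)$ is a companion matrix whose only nonzero entries lie on the subdiagonal (each equal to $1$) and in the last column (with the $i$-th row containing $-a_{2n+1-i}$), conjugation by $P=\mathrm{diag}(t^{p_1},\ldots,t^{p_{2n}})$ multiplies the $(i,j)$ entry by $t^{p_j-p_i}$. Hence the reduction at $t=0$ is governed entirely by which of these exponents vanish.

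First, I would show that the entire last column of $P^{-1}\gamma P$ vanishes at $t=0$. The $(i,2n)$ entry has $t$-valuation $\ord_t a_{2n+1-i}-p_i$, and combining the single-slope bound $\ord_t a_{2n+1-i}\ge \lceil(2n+1-i)m/(2n)\rceil$ with the piecewise formula for $p_i$ and the elementary inequality $\lceil x\rceil+\lceil y\rceil\ge \lceil x+y\rceil$, this valuation is at least $1$ in every case. (In type~C one also has $a_j=0$ for $j$ odd, but the slope estimate already suffices.) Hence the reduction matrix has nonzero entries only among the subdiagonal positions $(i,i-1)$, and the entry there, namely $t^{p_{i-1}-p_i}$, survives at $t=0$ precisely when $p_{i-1}=p_i$.

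Consequently, the reduction acts as $e_{i-1}\mapsto e_i$ along maximal runs of indices on which $p_j$ is constant and kills the top of each such run, so its Jordan partition is exactly the multiset of plateau sizes of the sequence $p_1,p_2,\ldots,p_{2n}$. The final step is to show that these plateau sizes form the balanced partition $[(k+1)^{l},k^{m-l}]$. Writing $2n=mk+l$ and using the identity $\lfloor 2nc/m\rfloor-\lfloor 2n(c-1)/m\rfloor=k+(\lfloor cl/m\rfloor-\lfloor(c-1)l/m\rfloor)$ for $1\le c\le m$, the number of indices $j$ with $\lceil jm/(2n)\rceil=c$ is either $k$ or $k+1$, with $k+1$ occurring for exactly $l$ values of $c$, yielding the required multiset of plateau sizes. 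The main subtlety is checking that the piecewise formula for $p_j$ glues consistently across the middle index $j=n+1$ — so that no plateau is spuriously merged or split there — which reduces to a short parity case analysis in $m$ using the symmetry $p_j+p_{2n+1-j}=m-1$ established in \eqref{eq:symmetry}.
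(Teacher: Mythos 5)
Your proof is correct and follows essentially the same route as the paper's: conjugate the companion matrix by $P$, use the single-slope estimate to show the entire last column vanishes modulo $t$ (this is the content of the preceding Lemma~\ref{lem:integrality of modified gamma C}), and identify the Jordan type of the surviving subdiagonal with the multiset of plateau lengths of $(p_j)$, which are then shown to equal $k$ or $k+1$ with multiplicities $m-l$ and $l$. The only cosmetic difference is in the plateau count, where you invoke the floor identity $\lfloor 2nc/m\rfloor-\lfloor 2n(c-1)/m\rfloor=k+\lfloor cl/m\rfloor-\lfloor(c-1)l/m\rfloor$ directly while the paper runs a short bounding argument; and both deal with the seam at $j=n$ via the symmetry $p_j+p_{2n+1-j}=m-1$ together with $m\equiv 0$.
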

\begin{proof}
    By the previous lemma, $P^{-1}\gamma P$ is integral and all elements of $(P^{-1}\gamma P)(0)$ are 0 except possibly those $\gamma(0)_{i,i-1}, 2\le i\le n$.
    
    We first show that 
    \begin{equation}\label{eq:sequence length}
    \#\{j \mid \ord P_j=l\}=k\; \text{ or }\; k+1 \quad \text{for}\quad 0\le l\le m-1.
    \end{equation}
    Since $m$ is even, it suffices to check this for the sequence $\left\{\left\lceil\tfrac{jm}{2n}\right\rceil\right\}_{1\le j\le n}$.
    Now suppose that $\left\lceil\tfrac{(j-1)m}{2n}\right\rceil<\Delta=\left\lceil\tfrac{jm}{2n}\right\rceil=\ldots=\left\lceil\tfrac{(j+\delta)m}{2n}\right\rceil<\ldots$. 
    
    Clearly, $\delta<k+1$. Since $\left\lceil\tfrac{(j-1)m}{2n}\right\rceil<\Delta=\left\lceil\tfrac{jm}{2n}\right\rceil$, $2n\Delta-jm\ge 2n-m$. 
    If $\delta\le k-2$, $2n\Delta-(j+\delta+1)m\ge 2n-(\delta+1)m>0$, a contradiction. Hence we prove \eqref{eq:sequence length}.
    
    As a result, only $k$ or $k+1$ appears in the partition of $(P^{-1}\gamma P)(0)$. Since the orders (of $P_j$'s) range from $0$ to $m-1$, the partition has to be $[(k+1)^{l}, (k)^{m-l}]$.
\end{proof}
We now determine where the orders ``jump".
\begin{lemma}\label{lem:jumps}
    We have $p_i>p_{i+1}$ precisely when
    \[
        i=j k+\left\lfloor\frac{j l}{m}\right\rfloor,\qquad 1\le j\le \frac{m}{2},
    \]
    or
    \[
        i=j k+\left\lceil\frac{j l}{m}\right\rceil,\qquad \frac{m}{2}+1\le j\le m-1.
    \]
\end{lemma}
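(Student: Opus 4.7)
The plan is to exploit the symmetry \eqref{eq:symmetry}. Subtracting the two relations $p_i + p_{2n+1-i} = m-1$ and $p_{i+1} + p_{2n-i} = m-1$ gives $p_i - p_{i+1} = p_{2n-i} - p_{2n+1-i}$, so the set of jump positions is invariant under the involution $i \mapsto 2n-i$, whose unique fixed point is $i = n$. This reduces the problem to locating first-half jumps ($1 \le i \le n$, including the boundary jump $p_n > p_{n+1}$) and then reflecting the interior ones to the second half.

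For the first half, $p_i = m - \lceil im/(2n)\rceil$ on $\{1,\ldots,n\}$, and a direct computation gives $p_n = m/2$ and $p_{n+1} = m/2 - 1$, so $p_i > p_{i+1}$ holds throughout $1 \le i \le n$ exactly when $\lceil im/(2n)\rceil < \lceil (i+1)m/(2n)\rceil$. Since $m < 2n$, the function $j \mapsto \lceil jm/(2n)\rceil$ is nondecreasing with unit increments, and each jump can be indexed by its post-value $s = \lceil (i+1)m/(2n)\rceil \in \{1,\ldots,m/2\}$; the jump labeled $s$ occurs at the largest $i$ with $im/(2n) \le s$, namely $i = \lfloor 2ns/m\rfloor$. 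Substituting $2n = mk + l$ and writing $j = s$ gives $i = jk + \lfloor jl/m\rfloor$, which is the first formula.

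For the second half, I reflect via $i \mapsto 2n - i$. Each first-half jump at $i' = jk + \lfloor jl/m\rfloor$ with $1 \le j \le m/2 - 1$ (skipping the self-symmetric center $j = m/2$, $i' = n$) produces a second-half jump at
\[
i = 2n - i' = (m-j)k + l - \left\lfloor \frac{jl}{m} \right\rfloor.
\]
Setting $j' = m - j$ and using the identity $\lfloor l - j'l/m \rfloor = l - \lceil j'l/m \rceil$, valid because $l$ is an integer, converts this to $i = j'k + \lceil j'l/m \rceil$ for $j' \in \{m/2+1, \ldots, m-1\}$, which is the second formula.

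The subtlest point, and the one I would pay most attention to, is the bookkeeping at the center $i = n$: one must verify that $j = m/2$ in the first formula reproduces precisely this jump. Since $m$ is even (as observed in the proof of Lemma~\ref{lem:partition of modified gamma C}), $l = 2n - mk$ is also even, and $\lfloor (m/2)l/m \rfloor = l/2$, yielding $i = mk/2 + l/2 = n$. The same value arises from $\lceil (m/2)l/m \rceil = l/2$, so the two formulas agree at the boundary and the central jump is counted exactly once, from the first formula's range $1 \le j \le m/2$.
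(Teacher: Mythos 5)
Your proposal is essentially correct and follows the same two‑step strategy as the paper: determine the jumps directly for $1\le i\le n$, then transfer them to $n+1\le i\le 2n-1$ via the symmetry $p_j+p_{2n+1-j}=m-1$. (The paper says ``similarly'' for the second half; you make the reflection explicit, which is a small improvement in clarity.) The algebra of the two formulas, the identity $\lfloor l - j'l/m\rfloor = l - \lceil j'l/m\rceil$, and the check at the center $i=n$ are all correct.

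There is one slip in bookkeeping that you should fix, though it does not propagate into the final formulas. You write that each first-half jump can be ``indexed by its post-value $s=\lceil (i+1)m/(2n)\rceil\in\{1,\ldots,m/2\}$'' and that ``the jump labeled $s$ occurs at the largest $i$ with $im/(2n)\le s$.'' These two clauses are inconsistent: if $s$ is the post-value, then the jumps have post-values in $\{2,\ldots,m/2+1\}$ (at $i=n$ the post-value is $\lceil(n+1)m/(2n)\rceil=m/2+1$), and the jump with post-value $s$ sits at the largest $i$ with $im/(2n)\le s-1$, giving $i=\lfloor 2n(s-1)/m\rfloor$. What you actually use — largest $i$ with $im/(2n)\le s$, i.e.\ $i=\lfloor 2ns/m\rfloor=sk+\lfloor sl/m\rfloor$ — is correct when $s$ is the \emph{pre}-value $\lceil im/(2n)\rceil$, which does range over $\{1,\ldots,m/2\}$. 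Replacing ``post-value $s=\lceil(i+1)m/(2n)\rceil$'' by ``value $s=\lceil im/(2n)\rceil$ at the jump'' repairs the exposition, and the rest of your argument goes through unchanged. (Like the paper's own proof, your argument implicitly assumes $k\ge 1$, i.e.\ $m\le 2n$, which is the regime in which the lemma is actually applied; you might state this.)
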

\begin{proof}
    We first determine when $\left\lceil\tfrac{im}{2n}\right\rceil<\left\lceil\tfrac{(i+1)m}{2n}\right\rceil$. We rewrite $i=jk+\delta, 0\le\delta<k$. Now we have $\Delta=\left\lceil\tfrac{(jk+\delta)m}{2n}\right\rceil<\left\lceil\tfrac{(jk+\delta+1)m}{2n}\right\rceil=\Delta+1$. In other words,
    \[
        \frac{(jk+\delta)m}{2n}\le \Delta<\frac{(jk+\delta+1)m}{2n}.
    \]
    Hence, we have
    \[
        \delta=(\Delta-j)k+\left\lfloor\tfrac{\Delta l}{m}\right\rfloor.
    \]

    Notice that $p_{n}>p_{n+1}$ and for $j=\frac{m}{2}$, we have $jk+\left\lfloor\tfrac{jl}{m}\right\rfloor=n$. Hence, we verify the lemma for $1\le i\le n$. 

    Similarly, for $n+1\le i\le 2n$, we have $i=2n-jk-\left\lfloor\tfrac{jl}{m}\right\rfloor$ for $1\le j\le \frac{m}{2}-1$. And the Lemma follows.
\end{proof}

Now we need to construct skew-symmetric matrices $g$ so that $g(P^{-1}\gamma P)=-(P^{-1}\gamma P)^\vee g$. 

First, we define a pairing $\widetilde g$ on the module $\sE=\frac{\sO[\lambda]}{(f(\lambda))}$, with a basis $\{1, \lambda, \cdots , \lambda^{2n-1}\}$. Hence, the action of $\lambda$ on $\sE$ has matrix $\operatorname{R}(f)$. And we define 
\[
    \tilde{g}(\lambda^\alpha, \lambda^\beta)=\operatorname{Tr}\left(\frac{\lambda^{\alpha}\sigma(\lambda^{\beta})}{f^\prime(\lambda)}\right)
\]
as in Section 3.2 of \cite{TN24}, here $\sigma(\lambda^\beta)=(-1)^{\beta} \lambda^\beta$ is the involution. One has $\tilde{g}\operatorname{R}(f)=-\operatorname{R}(f)^\vee \tilde{g}$. 

Moreover, we have the following estimate.

\begin{lemma}\label{pairing g in skew case}
    $\tilde{g}$ is non-degenerate skew-symmetric, and $\ord_t(\tilde{g})_{i, j}\geq \ord_t a_{i+j-2n-1}$ for $i+j\geq 2n+1$.
\end{lemma}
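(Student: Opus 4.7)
The plan is to express every entry of $\tilde{g}$ in terms of the scalar trace coefficients $s_k:=\operatorname{Tr}(\lambda^k/f'(\lambda))$ and then to read off the three claims---skew-symmetry, non-degeneracy, and the order estimate---from a single generating-series identity.

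First, I would establish the identity
\[
    \sum_{k\ge 0} s_k\, z^{-k-1} \;=\; \frac{1}{f(z)},
\]
which follows by applying $\operatorname{Tr}$ termwise to the partial-fraction expansion $1/f(z)=\sum_{\alpha}1/(f'(\alpha)(z-\alpha))$ over the roots $\alpha$ of $f$ and expanding each summand in descending powers of $z$. Setting $u=1/z$ and $A(u)=a_1 u+\cdots+a_{2n}u^{2n}$, this rewrites as
\[
    \sum_{k\ge 2n-1} s_k\, u^{k-2n+1} \;=\; \frac{1}{1+A(u)} \;=\; \sum_{j\ge 0}\bigl(-A(u)\bigr)^{j}.
\]
In particular $s_k=0$ for $0\le k\le 2n-2$ and $s_{2n-1}=1$. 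Since $\tilde{g}_{i,j}=(-1)^{j-1}s_{i+j-2}$, the matrix $\tilde{g}$ vanishes in the region $i+j\le 2n$ and has entries $\pm 1$ on the antidiagonal $i+j=2n+1$; non-degeneracy is then immediate from $\det\tilde{g}=\pm 1$ (the product of the antidiagonal entries).

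Skew-symmetry comes from exploiting the evenness $f(\lambda)=f(-\lambda)$. The roots of $f$ pair up as $\alpha\leftrightarrow -\alpha$, and $f'(-\alpha)=-f'(\alpha)$, so in $s_k=\sum_\alpha \alpha^k/f'(\alpha)$ the two contributions of each pair cancel exactly when $k$ is even. Hence $s_k=0$ for even $k$, which forces $\tilde{g}_{i,j}=0$ unless $i+j$ is odd; in that case $(-1)^{j-1}=-(-1)^{i-1}$, giving $\tilde{g}_{i,j}=-\tilde{g}_{j,i}$.

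For the order estimate, the expansion above writes $s_k$ (for $k\ge 2n$, $r:=k-2n+1\ge 1$) as the alternating sum
\[
    s_k \;=\; \sum_{j\ge 1}(-1)^{j}\sum_{\substack{i_1+\cdots+i_j=r\\ i_\ell\ge 1}} a_{i_1}\cdots a_{i_j},
\]
so it suffices to prove that every monomial on the right has $t$-valuation at least $\ord_t a_r$. The single-slope hypothesis gives $\ord_t a_i\ge im/(2n)$ with equality $\ord_t a_{2n}=m$ at the terminal vertex, and the even parity in type~C forces $a_i=0$ for odd $i$; combining these with the convexity of the Newton polygon yields the sub-additivity $\sum_\ell \ord_t a_{i_\ell}\ge \ord_t a_{i_1+\cdots+i_j}=\ord_t a_r$. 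The main obstacle I foresee is precisely this last comparison: the naive estimate $\sum_\ell \ord_t a_{i_\ell}\ge rm/(2n)$ matches only the \emph{lower bound} on $\ord_t a_r$ coming from the slope condition, and upgrading it to the claimed inequality requires careful Newton-polygon bookkeeping that exploits the extremal equality at $i=2n$ and the vanishing of $a_i$ for odd $i$.
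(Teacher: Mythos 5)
Your argument takes a genuinely different route from the paper, and the comparison is instructive. You replace the paper's direct recurrence $(g_0)_{2i+1,2n}=a_{2i}-\sum_{j}a_{2j}(g_0)_{2i-2j+1,2n}$ by the generating-series identity $\sum_{k\ge 0}s_kz^{-k-1}=1/f(z)$ with $s_k=\operatorname{Tr}(\lambda^k/f'(\lambda))$, and read off $s_k=0$ for $k\le 2n-2$, $s_{2n-1}=1$, and $s_k=0$ for $k$ even. This cleanly delivers non-degeneracy (antidiagonal of $\pm 1$'s, zeros above it) and skew-symmetry, and does so in a more conceptual way than the paper's entry-by-entry computation; the two derivations of the order estimate then become essentially equivalent after expanding $1/(1+A(u))$ geometrically.

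Your hesitation about the final step is well-founded, and you should trust it: the subadditivity $\sum_\ell\ord_ta_{i_\ell}\ge\ord_ta_{i_1+\cdots+i_j}$ does \emph{not} follow from the single-slope hypothesis, because $\ord_ta_r$ can sit strictly above the Newton-polygon boundary. A concrete witness is $f(\lambda)=\lambda^6+t\lambda^4+t^{100}\lambda^2+t^2$ (so $2n=6$, $m=2$, single slope): here $(\tilde g)_{5,6}=a_4-a_2^2=t^{100}-t^2$ has order $2$, while $\ord_ta_4=100$, so the inequality $\ord_t(\tilde g)_{i,j}\ge\ord_ta_{i+j-2n-1}$ fails as literally stated. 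The paper's own proof has exactly the same gap: the asserted inequality $\ord_ta_{2j}+\ord_ta_{2i-2j}\ge\ord_ta_{2i}$ is false in this example. What \emph{is} true, and is what both your expansion and the paper's recurrence actually yield, is the weaker estimate $\ord_t(\tilde g)_{i,j}\ge\bigl\lceil (i+j-2n-1)\tfrac{m}{2n}\bigr\rceil$. A quick look at the proof of Lemma~\ref{lem:modified pairing C} shows that only this ceiling bound, not $\ord_ta_{i+j-2n-1}$ itself, is used downstream, so the lemma should be read with this weaker conclusion; with that emendation your generating-function proof goes through and is, if anything, tidier than the paper's.
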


\begin{proof}
    Notice that $(\tilde{g})_{i, j}=0$ for $i+j\leq 2n$ and $(\tilde{g})_{i, j}=\pm 1$ for $i+j=2n+1$, hence $\tilde{g}$ is non-degenerate. To see that $\tilde{g}$ is  skew-symmetric, it is enough to notice that $(\tilde{g})_{i, j}=0$ if $i+j\equiv 0$ since $f(\lambda)$ is self-dual.

    Now we compute $\ord_t(\tilde{g})_{i, j}$. Notice that $(\tilde{g})_{i, j}=(-1)^{j+\beta}(\tilde{g})_{\alpha, \beta}$ whenever $i+j=\alpha+\beta$. So we compute $(\tilde{g})_{2i+1, 2n}$ for $1\leq i \leq n-1$.

    By direct computation, we have $(g_0)_{3, 2n}=a_2$ and the following recurrence relation: $$(g_0)_{2i+1, 2n}=a_{2i}-\sum_{j=1}^{i-1}a_{2j}\cdot (g_0)_{2i-2j+1, 2n}.$$

    To estimate the order of $(g_0)_{2i+1, 2n}$, we induct on $i$ and use the recurrence relation again. When $i=1$ we have $(g_0)_{3, 2n}=a_2$ and hence $\ord_t(g_0)_{3, 2n}\geq \ord_ta_2$. By the recurrence relation, we need to consider $\ord_t a_{2j}(g_0)_{2i-2j+1, 2n}$ for $1\leq j \leq i-1$, which satisfies $$\ord_t a_{2j}(g_0)_{2i-2j+1, 2n}\geq \ord_ta_{2j}+\ord_ta_{2i-2j}$$ by induction. Recall that we have $\ord_ta_{2\alpha}\geq \left\lceil \frac{2\alpha m}{mk+l}\right\rceil$ for $1\leq \alpha \leq n$. Then $$\ord_t a_{2j}(g_0)_{2i-2j+1, 2n}\geq \ord_ta_{2j}+\ord_ta_{2i-2j}\geq \ord_t a_{2i}$$ and hence $\ord_t(g_0)_{2i+1, 2n}\geq \ord_ta_{2i}$.
\end{proof}

As a consequence, we define $g:=P^\vee\tilde{g}P/t^{m-1}$, then 

\begin{lemma}\label{lem:modified pairing C}
    The pairing $g$ is integral, non-degenerate, and skew-symmetric, and it satisfies $g(P^{-1}\gamma P)=-(P^{-1}\gamma P)^\vee g$.
\end{lemma}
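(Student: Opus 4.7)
\textbf{Proof plan for Lemma \ref{lem:modified pairing C}.} Since $P$ is diagonal we have $P^{\vee}=P$, and hence the entries of $g$ are simply $g_{i,j}=t^{p_i+p_j-(m-1)}\,\tilde g_{i,j}$. Skew-symmetry and the intertwining relation are then formal: skew-symmetry follows from $\tilde g^{\vee}=-\tilde g$, while the intertwining follows from $\tilde g\,\operatorname{R}(f)=-\operatorname{R}(f)^{\vee}\tilde g$ together with $(P^{-1}\gamma P)^{\vee}=P\gamma^{\vee}P^{-1}$, giving
\[
g(P^{-1}\gamma P)=P\tilde g\gamma P/t^{m-1}=-P\gamma^{\vee}\tilde g P/t^{m-1}=-(P^{-1}\gamma P)^{\vee}g.
\]
So the substance of the lemma is integrality and non-degeneracy.

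For integrality I will split on $i+j$. If $i+j\le 2n$, then $\tilde g_{i,j}=0$ by Lemma~\ref{pairing g in skew case}, so $g_{i,j}=0$. If $i+j=2n+1$, the symmetry \eqref{eq:symmetry} gives $p_i+p_j=m-1$, so the exponent of $t$ vanishes and $g_{i,j}=\tilde g_{i,j}=\pm 1$; in particular the anti-diagonal of $g$ consists of units. Non-degeneracy is then immediate: the vanishing for $i+j\le 2n$ means $g$ is anti-upper-triangular, so expanding $\det g$ along the anti-diagonal yields $\det g=\pm 1\in\sO^\times$.

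The remaining case $i+j=2n+1+s$ with $s\ge 1$ is the one requiring work. Here Lemma~\ref{pairing g in skew case} and the single-slope hypothesis give $\ord_t\tilde g_{i,j}\ge\ord_t a_s\ge\lceil sm/(2n)\rceil$, so integrality of $g_{i,j}$ reduces to verifying
\[
(m-1)-(p_i+p_j)\le\lceil sm/(2n)\rceil.
\]
Since $i+j\ge 2n+2$, at least one of $i,j$ exceeds $n$. In the mixed case $i\le n<j$, writing $j'=2n+1-j=i-s$ the left-hand side becomes $\lceil im/(2n)\rceil-\lceil(i-s)m/(2n)\rceil$, which is bounded above by $\lceil sm/(2n)\rceil$ by the elementary inequality $\lceil x\rceil-\lceil y\rceil\le\lceil x-y\rceil$. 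In the case $i,j>n$, setting $i'=2n+1-i$, $j'=2n+1-j$ (so $i'+j'=2n+1-s$), the left-hand side becomes $m+1-\lceil i'm/(2n)\rceil-\lceil j'm/(2n)\rceil$, and superadditivity $\lceil a\rceil+\lceil b\rceil\ge\lceil a+b\rceil$ combined with $\lceil m/(2n)\rceil\ge 1$ bounds it by $1+\lfloor(s-1)m/(2n)\rfloor\le\lceil sm/(2n)\rceil$.

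The main obstacle is precisely this last ceiling arithmetic: one must juggle $p_i$ coming from two different formulas (one for indices $\le n$ and one for $>n$) and reconcile them with the single-slope estimate $\ord_t a_s\ge\lceil sm/(2n)\rceil$. Once these inequalities are in place, every other assertion of the lemma is a short formal check. I expect the proof to occupy about a page, with the bulk devoted to the two subcases of the integrality estimate; the skew-symmetry, non-degeneracy, and intertwining statements will each take only a line or two.
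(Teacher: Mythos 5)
Your proof is correct and follows essentially the same route as the paper's: the same reduction of integrality to the inequality $p_i+p_j+\ord_t a_{i+j-2n-1}\ge m-1$, the same case split according to whether one or both of $i,j$ exceed $n$, and the same anti-triangular structure for non-degeneracy. The paper simply states the resulting ceiling inequalities without deriving them, whereas you spell out the subadditivity/superadditivity steps; the substance is identical.
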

\begin{proof}
    To show that $g$ is integral, we only need to show that:
    \begin{equation}\label{eq:integrality of pairing}
         p_i+p_j+\ord_t a_{i+j-2n-1}\ge m-1, i+j>2n+1.
    \end{equation}
    We may assume that $i\ge j$. Hence $p_i= \left\lceil\tfrac{(2n+1-i)m}{2n}\right\rceil-1$. If $1\le j\le n$, \eqref{eq:integrality of pairing} is due to the following
    \[
        \left\lceil\frac{(1-i)m}{2n}\right\rceil-\left\lceil\frac{jm}{2n}\right\rceil+\left\lceil\frac{(i+j-1)m}{2n}\right\rceil\ge 0.
    \]
    If $n+1\le j\le 2n$,  \eqref{eq:integrality of pairing} is due to the following
    \[
        \left\lceil\frac{(1-i)m}{2n}\right\rceil+\left\lceil\frac{(1-j)m}{2n}\right\rceil+\left\lceil\frac{(i+j-1)m}{2n}\right\rceil\ge 1.
    \]
    Since $g_{ij}=\pm 1$ for $i+j=2n+1$, $g$ is nondegenerate. The lastone follows naturally.
\end{proof}

\begin{proposition}\label{prop:C NP}
    The reduction of $\left(\gamma=\operatorname{diag} \{P_i^{-1}\gamma_i P_i\}_{i=1}^r,\; g=\operatorname{diag} \{g_i\}_{i=1}^r\right)$ gives the unique type C minimal reduction of $\gamma_0\in L^{\heartsuit}\mathfrak{sp}_{2n}$. 
\end{proposition}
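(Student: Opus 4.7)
The plan is to assemble the blockwise construction of Lemmas~\ref{lem:integrality of modified gamma C}, \ref{lem:partition of modified gamma C}, and~\ref{lem:modified pairing C} into a single element, adjust its pairing to the standard symplectic model via the procedure of Subsection~\ref{subsec:preparation modify pairing}, and then deduce minimality and uniqueness from the already-established type A result.

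First, I would use Corollary~\ref{cor:only depends on char} together with Lemma~\ref{decomposition wrt NP} to reduce to the block-diagonal setup. Since $\gamma_0\in L^{\heartsuit}\mathfrak{sp}_{2n}$ forces $\chi_{\gamma_0}(\lambda)=\chi_{\gamma_0}(-\lambda)$, and the involution $\lambda\mapsto -\lambda$ preserves Newton polygon slopes, each single-slope factor $f_i$ in the decomposition $\chi_{\gamma_0}=\prod_{i}f_i$ is itself self-dual of even degree $2n_i$. The blockwise statements above then give that $(P_i^{-1}\gamma_i P_i,\, g_i)$ is integral with reduction equal to the $m_i$-balanced partition of $2n_i$, while $g_i$ is a nondegenerate integral skew-symmetric pairing with respect to which $P_i^{-1}\gamma_iP_i$ is anti-self-adjoint.

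Next, I would pass from the block-diagonal pairing to the standard symplectic form, exactly as prepared in Subsection~\ref{subsec:preparation modify pairing}. All nondegenerate integral skew-symmetric pairings on $\mathcal{O}^{2n}$ are equivalent, so I can choose $y\in L^+\GL_{2n}$ with $y^{\vee}\,\mathrm{diag}\{g_i\}\,y$ equal to the standard symplectic form. Conjugation by $y$ preserves the $\GL_{2n}$-partition of the reduction (because $y\in L^+\GL_{2n}$) and places the conjugated element inside $L^+\mathfrak{sp}_{2n}$. Since $y^{-1}\gamma y$ and $\gamma_0$ share the characteristic polynomial $\chi_{\gamma_0}$, Corollary~\ref{cor:only depends on char} yields that they are $L\Sp_{2n}$-conjugate, and Lemma~\ref{lem:congjuation invriance of RT} then places the collection of $m_i$-balanced partitions in $\RT(\gamma_0)$.

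Finally, I would verify that this collection is the unique minimum. Any $L\Sp_{2n}$-conjugation is an $L\GL_{2n}$-conjugation, so every partition occurring in $\RT(\gamma_0)$ viewed in type C dominates the type A minimum. By Proposition~\ref{prop:minimal is balanced}, the type A minimum is precisely the collection of $m_i$-balanced partitions; a short parity check using $2n_i=m_ik_i+l_i$ shows that $[(k_i+1)^{l_i},k_i^{m_i-l_i}]$ automatically satisfies the type C parity conditions on each self-dual block (distinguishing the cases $k_i$ even and $k_i$ odd). Hence our partition is simultaneously a universal lower bound for type C reductions and an actual element of $\RT(\gamma_0)$, which forces it to be the unique minimum. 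The main obstacle is bookkeeping rather than any deep argument: one must confirm that the pairing adjustment by $y\in L^+\GL_{2n}$ preserves the reduction partition and that the type C parity conditions are automatic on each self-dual block, both of which reduce to short direct verifications once the single-block construction is in hand.
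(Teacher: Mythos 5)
Your proposal is correct and takes essentially the same approach as the paper: realize the collection of $m_i$-balanced partitions as a type~C reduction via the block-diagonal pairing construction, adjust to the standard symplectic form as in Subsection~\ref{subsec:preparation modify pairing}, and deduce uniqueness by comparing with the type~A minimum. Your explicit parity check (that $l_i\equiv m_i k_i\pmod 2$ forces each balanced partition to be of type~C) is a correct detail that the paper leaves implicit.
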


\begin{proof}
    Here Lemma \ref{lem:Levi} does not apply. But we already see that the reduction of $\gamma$ is minimal and unique in type A, now we have constructed a non-degenerate, skew-symmetric $g$ compatible with $\gamma$, then the reduction must be minimal and unique in type C, see Subsection \ref{subsec:preparation modify pairing}.
\end{proof}

\section{Construction of Minimal Reduction: Types B and D}\label{type BD}

We now turn to the remaining—and most delicate—cases among the classical Lie algebras, namely the special orthogonal Lie algebras $\g = \mathfrak{so}_{2n}$ and $\g = \mathfrak{so}_{2n+1}$. 

Given $\gamma_0\in L^{\heartsuit}\mathfrak{g}$, then $\chi_{\gamma}(\lambda)$ is a self-dual polynomial of degree $2n$. Drawing the Newton polygon of $\chi_{\gamma_0}(\lambda)$ yields a factorization
\begin{align}\label{eq:chi decomposition}
    \chi_{\gamma_0}(\lambda) = \prod_i f_i(\lambda),
\end{align}
where each factor $f_i(\lambda)$ has Newton polygon of a single slope $\mu_i$, and these slopes are strictly decreasing:
\[
    \mu_1 > \mu_2 > \cdots.
\]
Each $f_i(\lambda)$ is self-dual. Let $2n_i$ denote the degree of $f_i(\lambda)$, and let $m_i$ be the $t$-adic valuation of the constant term $f_i(0)$.

As in the preceding sections, for each factor $f_i(\lambda)$ we construct a matrix $\gamma_i$ with characteristic polynomial $f_i(\lambda)$ whose type~A reduction is balanced. Two new difficulties arise in types~B and~D.

\smallskip
\noindent
\textbf{(i) Admissibility of the partition.}
Collecting the balanced partitions associated with all $\gamma_i$ (and, in type~B, additionally adjoining the part $[1]$) produces a partition, denoted $\mathbf d_A$. In general, $\mathbf d_A$ need not be admissible for types~B or~D (in the sense of the parity conditions on even parts).

\smallskip
\noindent
\textbf{(ii) Existence of compatible non-degenerate forms.}
Unlike the symplectic (type~C) situation, constructing for each $\gamma_i$ a compatible non-degenerate symmetric bilinear form can be subtle; in certain cases, no such form exists on a single factor.

\medskip
This section is organized as follows.

\begin{itemize}
\item[\textbf{Step 1.}]
We show that there exists a unique \emph{admissible} partition of type~B or~D (see Definition~\ref{def:admissible}) which is minimal among those admissible partitions that dominate $\mathbf d_A$. Moreover, if this partition occurs as a reduction of $\gamma_0$, then the minimal reduction of $\gamma_0$ is unique and its associated partition is precisely the one obtained in this way.

\item[\textbf{Step 2.}]
For each factor $f_i(\lambda)$, we construct a pair $(\gamma_i,g_i)$ such that the type~A reduction of $\gamma_i$ is balanced and $\ord_t(\det g_i)=m_i$. When $m_i\equiv 0$, we further conjugate $(\gamma_i,g_i)$ to $(\gamma_i',g_i')$ so that the reduction of $\gamma_i'$ matches the prediction of Step~1 and $g_i'$ is non-degenerate.

\item[\textbf{Step 3.}]
When $m_i\equiv 1$, we group together consecutive factors $\prod_{j=i}^{i+r} f_j$ (as in the Type~(D2) case in \S\ref{subsec:modify partition}), where $r$ is chosen so that the two ``odd'' blocks are paired.\footnote{Here $r$ is the smallest integer such that $m_{i+r}\equiv 1$ and $m_{i+1},\ldots,m_{i+r-1}\equiv 0$.} Then, in \S\ref{subsec:modify pairing}, we conjugate the pair $\left(\bigoplus_{j=i}^{i+r}\gamma_j,\ \bigoplus_{j=i}^{i+r}g_j\right)$ to a new pair $(\gamma,g)$ such that the reduction of $\gamma$ agrees with the partition predicted in Step~1 and $g$ is non-degenerate.
\end{itemize}

Consequently, we conclude the proof of the main theorem.

\begin{proof}[Proof of Theorem~\ref{thm.main}]
The cases of types~A and~C follow from Propositions~\ref{prop:A} and~\ref{prop:C}
(equivalently, Propositions~\ref{prop:minimal is balanced} and~\ref{prop:C NP}). For cases of types~B and~D, assuming that $\chi_{\gamma}(\lambda)$ decomposes as in~\eqref{eq:chi decomposition}, then we constructed partitions $\bf{d}_B$ and $\bf{d}_D$ as in ($\star\star$) and ($\star$) in \S\ref{subsec:modify partition}. Proposition~\ref{prop.type D minimal reduction}, Corollary~\ref{very even} and Proposition~\ref{prop.type B minimal reduction} shows that if $\bf{d}_B$ or $\bf{d}_D$ is a reduction of $\gamma$, then it must be minimal and unique. Then in Propositions~\ref{prop:B} and~\ref{prop:D}, we show that $\bf{d}_B$ and $\bf{d}_D$ can be realized as redution of $\gamma$.
\end{proof}

\subsection{Finding the Minimal Reduction}\label{find the reduction}

In this subsection, we explain how to modify the partition $\mathbf d_A$ into an admissible partition of type~B or~D, with the property that if it occurs as a reduction of $\gamma_0$, then it is the unique minimal reduction of $\gamma_0$.

We emphasize, however, that the minimal reduction depends not only on the valuations of the coefficients of $\chi_{\gamma_0}(\lambda)$, but also on certain relations among these coefficients. This subtle dependence shows that the Newton polygon alone is not always sufficient to determine the minimal reduction.

\subsubsection{Admissible partitions}

Following \cite{Spa88}, we introduce the following notation.

\begin{definition}\label{associated polynomial}   
    Let $d \in \bf{d}$ be a part of the partition such that $d \equiv 0$ and
    \[
        d_{i-1} > d_i = \cdots = d_{i+j-1} = d > d_{i+j}
    \]
    for some $i$ and $j$. Denote $d_{< i} := \sum_{\alpha < i} d_\alpha$.
    Then we define the associated graded polynomial
    \[
        \operatorname{gr}_{\bf{d}, d}(f) := \sum_{\alpha = 0}^{j} \left(a_{d_{< i} + \alpha d}\right)_{i + \alpha} X^{j - \alpha} \in \mathbb{C}[X].
    \]
\end{definition}

\begin{definition}\label{def:admissible}
    Consider $\gamma\in L^+\mathfrak{so}_{2n}$, then a partition $\bf{d}$ of $2n$ is said to be \emph{admissible} to $\gamma$, or equivalently, admissible to $\chi_{\gamma}(\lambda)$, if for any 
    \[
    d_{2i} > d_{2i+1} = d_{2i+2} = \cdots = d_{2i+2j}=d > d_{2i+2j+1}.
    \]
    with $d\equiv 0$, the polynomial $\operatorname{gr}_{\bf{d}, d}$ is a square in $\mathbb{C}[X]$. We can define the same notion in type B case similarly.
\end{definition}

\begin{proposition}{\cite[Proposition 6.4]{Spa88}}\footnote{In Claim 5 of the proof, one should read $y \in x + \mathfrak{m} \mathfrak{o}_N(\sO)$ instead.} \label{relation for type BD1*}
Let $\gamma \in L^+ \g$ be an element in type B or D. Let the partition associated to $\gamma(0)$ be $\bf{d}$, then $\bf{d}$ is admissible to $\gamma$.
\end{proposition}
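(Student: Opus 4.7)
The plan is to reduce admissibility to the algebraic criterion of Lemma~\ref{square of polynomial} via an induced self-adjoint operator on a $2j$-dimensional associated-graded space. Since $\gamma\in L^+\g$ is anti-self-adjoint for a nondegenerate symmetric pairing $g$ on $\sO^N$ and $d$ is even, $\gamma^d$ is $g$-self-adjoint; descending this symmetry to the size-$d$ part of the Jordan decomposition of $\gamma(0)$ should produce a self-adjoint operator whose characteristic polynomial is $\operatorname{gr}_{\bf{d},d}(f)$ and which admits an invariant maximal isotropic subspace.

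First I would choose a basis of $\sO^N$ adapted to the Jordan decomposition of $\gamma(0)$ and to the canonical form for $g$ from Corollary~\ref{cor:canonical split} and Lemma~\ref{lem:canonical metric}: in particular, the $2j$ Jordan blocks of size exactly $d$ organize into $j$ dual pairs under $g$. Write the Taylor expansion $\gamma=\gamma(0)+t\gamma_1+t^2\gamma_2+\cdots$ and record the anti-self-adjoint constraints $g\gamma_k=-\gamma_k^\vee g$ order by order. Then define
\[
    V_d:=\ker\gamma(0)^d\big/\bigl(\ker\gamma(0)^{d-1}+\gamma(0)\ker\gamma(0)^{d+1}\bigr),
\]
a $2j$-dimensional $\CC$-vector space on which $g$ descends to a nondegenerate symmetric pairing $\bar g$, and on which $\gamma^d$ descends to a $\bar g$-self-adjoint operator $\overline{\gamma^d}$.

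Second I would identify $\det(X-\overline{\gamma^d})$ with $\operatorname{gr}_{\bf{d},d}(f)$ by expanding $\det(\lambda-\gamma)$ as a signed sum over permutations in the Jordan basis. The contributions to $(a_{d_{<2i+1}+\alpha d})_{2i+1+\alpha}$ of lowest $t$-order come precisely from using $\gamma\bmod t$ inside Jordan blocks of size greater than $d$ (whose $d$-th powers vanish to the relevant order) and using $\overline{\gamma^d}$ on $\alpha$ of the chosen size-$d$ blocks with the identity on the remaining $2j-\alpha$; all other terms vanish to strictly higher $t$-order. Third I would exhibit a $\overline{\gamma^d}$-invariant maximal isotropic subspace $W\subset V_d$: select one block from each of the $j$ dual pairs and let $W$ be the image in $V_d$ of the top vectors. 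By construction $W$ is $\bar g$-isotropic, and $\overline{\gamma^d}$-stability follows because the quotient defining $V_d$ kills the ``cross-pair'' entries that the anti-self-adjoint constraints on the $\gamma_k$ force to lie in $\ker\gamma(0)^{d-1}+\gamma(0)\ker\gamma(0)^{d+1}$. Applying Lemma~\ref{square of polynomial} then yields $\operatorname{gr}_{\bf{d},d}(f)=\beta(X)^2$.

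The main obstacle will be the combination of Steps~2 and~3: simultaneously identifying the induced characteristic polynomial and verifying $\overline{\gamma^d}$-invariance of $W$ requires careful bookkeeping that mixes the Jordan block structure at $t=0$ with the anti-self-adjoint constraints on all Taylor coefficients $\gamma_k$. The anti-self-adjoint condition is essential here: without it, $\overline{\gamma^d}$ would not be $\bar g$-self-adjoint, no invariant isotropic subspace would survive the quotient, and $\operatorname{gr}_{\bf{d},d}(f)$ would generically fail to be a square. Tracking this carefully against the explicit formula in Definition~\ref{associated polynomial} is the technical core of Spaltenstein's original argument.
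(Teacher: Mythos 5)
The paper does not prove this statement; it simply cites \cite[Proposition 6.4]{Spa88} (with a footnote correcting a typo in Spaltenstein's proof), so there is no internal argument to compare against. Your proposal therefore has to stand on its own, and it has a concrete error at its first step.

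You define $V_d:=\ker\gamma(0)^d\big/\bigl(\ker\gamma(0)^{d-1}+\gamma(0)\ker\gamma(0)^{d+1}\bigr)$ and assert that $g$ descends to a nondegenerate symmetric pairing on it. This is false whenever $\gamma(0)$ has a Jordan block of size $>d$, which always happens here (since $d_{2i}>d$). Write $N=\gamma(0)$. The radical of $g|_{\ker N^d}$ is $\ker N^d\cap(\ker N^d)^\perp=\ker N^d\cap\operatorname{im}N^d=N^d\ker N^{2d}$, so for the pairing to descend you need $\ker N^{d-1}+N\ker N^{d+1}\subset N^d\ker N^{2d}$. Take a block of size $d+1$ with basis $e_1,\dots,e_{d+1}$, $Ne_i=e_{i+1}$, and the standard pairing $g(e_a,e_{d+2-a})=\pm1$. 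Then $e_2=Ne_1\in N\ker N^{d+1}$, but $g(e_2,e_d)\ne0$ with $e_d\in\ker N^d$, so $e_2$ is not in the radical. Hence $g$ does \emph{not} descend to your $V_d$. Relatedly, $\gamma^d\bmod t=N^d$ annihilates $\ker N^d$, so to get a nonzero ``$\overline{\gamma^d}$'' you must in fact restrict to a $t$-rescaled sublattice and not merely reduce mod $t$; the proposal never makes this precise.

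The second serious gap is Step~3. You claim that the images of the top vectors of one block from each $g$-dual pair form a $\overline{\gamma^d}$-invariant Lagrangian, ``because the quotient kills the cross-pair entries that the anti-self-adjoint constraints force into the denominator.'' That assertion is the entire content of the proposition: self-adjointness of the induced operator by itself does not give an invariant Lagrangian (by Lemma~\ref{square of polynomial} an invariant Lagrangian exists iff the characteristic polynomial is already a square, which is what you are trying to prove). So this step is circular unless you actually verify, from the order-by-order anti-self-adjointness relations $g\gamma_k=-\gamma_k^\vee g$ together with the normal form of $g$ on the size-$d$ dual pairs, that the $B\to B'$ matrix entry of the induced operator vanishes. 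That computation is the technical heart of Spaltenstein's argument and is missing here. Until both the correct associated-graded space (with a genuinely descended pairing) and the invariance computation are supplied, the proposal does not constitute a proof.
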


\subsubsection{Modifying the Partition}\label{subsec:modify partition}

For each factor $f_i(\lambda)$ in the decomposition \eqref{eq:chi decomposition}, suppose $f_i(\lambda)$ has degree $2n_i$, $\ord_t a_{2n_i} = m_i$, and write $2n_i = m_i k_i + l_i$ with $0 \leq l_i < m_i$.

Let $\bf{d}_i$ denote the type A minimal reduction partition associated to $f_i(\lambda)$, i.e., $\RT^{\mathrm{A}}_{\min}(f_i)$. Define:
\[
\bf{d}_A =
\begin{cases}
   \mathrm{Re}\left(\bf{d}_1, \bf{d}_2, \ldots \right) \cup [1], & \text{in type B}, \\
   \mathrm{Re}\left(\bf{d}_1, \bf{d}_2, \ldots \right), & \text{in type D}.
\end{cases}
\]
Here $\mathrm{Re}(\ldots)$ denotes rearranging the parts into non-increasing order. Note that the re-ordering only happens at the case when $\left( \bf{d}_i, \bf{d}_{i+1} \right)$ has the form
\[
    \left((k_i+1)^{l_i}, k_i^{m_i-l_i}, (k_{i+1}+1)^{l_{i+1}}, k_{i+1}^{m_{i+1}-l_{i+1}} \right)
\]
such that $k_i = k_{i+1}$.

We now describe how to modify the partition $\bf{d}_A$ to obtain a reduction in type D.

\vspace{0.3cm}
\paragraph{\textbf{$(\star)$ Construction in type D:}}
\begin{enumerate}
    \item[\textbf{(D1)}] If $m_1 \equiv 0$:
    \begin{itemize}
        \item[(D1.1)] If either $l_1 \neq 0$ or $k_1 \equiv 1$, then set $\widetilde{\bf{d}}_1 = \bf{d}_1 = [(k_1 + 1)^{l_1}, k_1^{m_1 - l_1}]$.
        
        \item[(D1.2)] If $l_1 = 0$ and $k_1 \equiv 0$, then $\bf{d}_1 = [k_1^{m_1}]$ is already of type D, we modify as follows:
        \begin{itemize}
            \item[(D1.2.1)] If $\gr_{\bf{d}_A,k_1}(\chi_{\gamma_0}(\lambda))$ is a square (in the sense of Proposition~\ref{relation for type BD1*}), set $\widetilde{\bf{d}}_1 = \bf{d}_1$;
            \item[(D1.2.2)] Otherwise, set $\widetilde{\bf{d}}_1 = [k_1 + 1, k_1^{m_1 - 2}, k_1 - 1]$.
        \end{itemize}
    \end{itemize}
    
    \item[\textbf{(D2)}] If $m_1 \equiv 1$:
    
    Let $r \geq 2$ be the smallest integer such that $m_2 \equiv \cdots \equiv m_{r-1} \equiv 0$ but $m_r \equiv 1$. Then we modify 
    \[
        (\bf{d}_1, \ldots, \bf{d}_r) \quad \text{to} \quad (\widetilde{\bf{d}}_1, \bf{d}_2, \ldots, \bf{d}_{r-1}, \widetilde{\bf{d}}_r),
    \]
    i.e., we keep $\bf{d}_i$ unchanged for $i = 2, \ldots, r - 1$, and apply modifications only to
    \[
         \bf{d}_1 = \left[(k_1+1)^{l_1}, k_1^{m_1-l_1}\right], \qquad \bf{d}_r = \left[(k_r+1)^{l_r}, k_r^{m_r-l_r}\right]
    \]
    resulting in:
    \begin{align*}
        \widetilde{\bf{d}}_1 = \left[(k_1+1)^{l_1+1}, k_1^{m_1-l_1-1}\right], \qquad
        \widetilde{\bf{d}}_r = 
        \begin{cases}
        \left[(k_r+1)^{l_r-1}, k_r^{m_r-l_r+1}\right], & l_r \neq 0; \\
        \left[ k_r^{m_r-l_r-1}, k_r-1\right], & l_r = 0.
        \end{cases}
    \end{align*}
\end{enumerate}
By re-ordering and induction, we obtain a partition $\bf{d}_D$ of type D.

\begin{proposition}\label{prop.type D minimal reduction}
    Let $\gamma_0 \in L^{\heartsuit}\mathfrak{so}_{2n}$, and let $\bf{d}_A := \RT_{\min}^A(\gamma_0)$ be the type~A minimal reduction of $\gamma_0$, viewing $\gamma_0$ as an element of $L^{\heartsuit}\mathfrak{sl}_{2n}$. Let $\bf{d}_D$ be the partition as in ($\star$) above. Then $\bf{d}_D$ is the unique minimal admissible partition of type~D dominating $\bf{d}_A$, i.e., $\bf{d}_D \ge \bf{d}_A$ and for any admissible partition $\mathbf d'_D$ of type~D with $\bf{d}'_D \ge \bf{d}_A$, one has $\bf{d}'_D \ge \bf{d}_D$.
\end{proposition}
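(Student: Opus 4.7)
\medskip

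The plan is to establish four claims in turn: (a) $\bf{d}_D \ge \bf{d}_A$; (b) $\bf{d}_D$ is admissible to $\gamma_0$; (c) $\bf{d}_D$ is a partition of type D; and (d) any admissible type-D partition $\bf{d}' \ge \bf{d}_A$ satisfies $\bf{d}' \ge \bf{d}_D$. Statements (a)--(c) show that $\bf{d}_D$ is a legitimate candidate for a reduction of $\gamma_0$ (a reduction must be admissible by Proposition~\ref{relation for type BD1*} and of type D since $\gamma_0 \in L^+\mathfrak{so}_{2n}$), while (d) supplies both minimality and uniqueness.

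For (a), I will argue case by case through $(\star)$. In (D1.1) and (D1.2.1) the block $\bf{d}_i$ is left unchanged, while (D1.2.2) replaces $[k_1^{m_1}]$ by $[k_1+1, k_1^{m_1-2}, k_1-1]$, a strict dominator of equal total size. In (D2), observe that since the slopes $\mu_i = 2n_i/m_i$ are strictly decreasing, we have $k_1 \ge k_2 \ge \cdots \ge k_r$; after rearrangement into non-increasing order, the parts of $\bf{d}_1$ occupy higher positions than those of $\bf{d}_r$. The modification transfers one unit of weight from $\bf{d}_r$ to $\bf{d}_1$, i.e.\ from a lower to a higher position, and hence only increases partial sums. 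Rearrangement preserves this dominance, giving $\bf{d}_D \ge \bf{d}_A$.

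For (c), the type-D condition is checked block by block, paying attention to how parts of neighboring blocks combine under rearrangement. The key observations are: when $m_i \equiv 0$, the multiplicities $l_i$ and $m_i-l_i$ share the same parity, so the modifications of $(\star)$ either keep each balanced block already type D, or split off at most one odd part on each side that then merges with a matching part from an adjacent block to form an even multiplicity; when $m_i \equiv 1$, both multiplicities are of opposite parity, and the pairing construction (D2) is engineered precisely so that the odd-multiplicity odd parts arising from $\widetilde{\bf{d}}_1$ and $\widetilde{\bf{d}}_r$ combine to an even total after rearrangement. For (b), I invoke Definition~\ref{def:admissible}: this condition only constrains even parts $d$ appearing at positions $2i+1,\ldots,2i+2j$. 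In cases (D1.1), (D1.2.1), and (D2), the choice of $\widetilde{\bf{d}}_i$ is set up so that each such graded polynomial $\gr_{\bf{d}_D, d}(\chi_{\gamma_0})$ is a square, either by construction (D1.2.1) or because the factorization of $\chi_{\gamma_0}$ along the Newton polygon forces the relevant graded polynomial to split accordingly; in (D1.2.2), the new block $[k_1+1, k_1^{m_1-2}, k_1-1]$ places the even part $k_1$ starting at position $2$, an even index, so the admissibility condition becomes vacuous for this block.

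The main obstacle I anticipate is (d), the minimality. My plan is a local analysis at each ``breakpoint'' of the Newton polygon of $\chi_{\gamma_0}$, i.e.\ the positions in $\bf{d}_A$ where one balanced block ends and the next begins. For $\bf{d}' \ge \bf{d}_A$ admissible and of type D, the partial sums $S^{\bf{d}'}_k$ at each breakpoint are bounded below by $S^{\bf{d}_A}_k$; the type-D and admissibility constraints should force strict inequality exactly at those breakpoints where $(\star)$ prescribes a modification, yielding $S^{\bf{d}'}_k \ge S^{\bf{d}_D}_k$ block by block. The delicate point is ruling out ``globally cleverer'' modifications that spread the necessary weight transfer across many blocks: one must show that any attempt to satisfy the type-D and admissibility constraints via non-local rearrangements produces a partition that dominates $\bf{d}_D$ at some breakpoint. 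This is essentially a combinatorial argument about the interaction between the constraints forced in case (D1.2) and those forced in case (D2), and I expect it will require the Newton polygon decomposition in an essential way.
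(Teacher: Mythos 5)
Your overall structure — establishing (a) dominance, (b) admissibility, (c) type~D, (d) minimality and uniqueness — is the correct shape, and your observations for (a) are sound (the ordering $k_1\ge k_2\ge\cdots\ge k_r$ does hold by the Newton polygon convexity, so the weight transfer in (D2) moves from lower to higher positions and preserves dominance). Your observation for (D1.2.2) that the even parts $k_1$ occupy positions $2,\dots,m_1-1$, so the admissibility constraint on that run of $k_1$'s becomes vacuous within the block, is also correct and a nice point.

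However, there is a genuine gap in (d), which is the heart of the proposition, and which you explicitly leave as an anticipated difficulty rather than a proof. You write that one must rule out ``globally cleverer'' modifications spreading weight across many blocks, and that you expect this to need the Newton polygon decomposition ``in an essential way,'' but you never supply the actual mechanism. The paper resolves this via a sharp combinatorial observation: when a competing admissible type-D partition $\mathbf d'$ matches $\mathbf d_A$ up to some point but then differs by redistributing weight across a block boundary (Case~2 in the paper's Lemma~\ref{lem:Case 2}), the resulting maximal run of an even part $d'_{2i_1}$ straddles two of the blocks $\mathbf d'_j$, $\mathbf d'_{j+1}$ in such a way that the graded polynomial $\operatorname{gr}_{\mathbf d', d'_{2i_1}}(\chi_{\gamma_0})$ has \emph{odd} degree, and hence can never be a square. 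This contradicts admissibility, so such $\mathbf d'$ are eliminated, forcing $\mathbf d'_1\ge\widetilde{\mathbf d}_1$ and an induction on blocks. Your proposal never identifies this odd-degree obstruction, which is exactly what makes the block-by-block induction close. Similarly, in case (D1.2.2), the paper's argument is that since $\mathbf d'$ is type~D and $\operatorname{gr}_{\mathbf d_A,k_1}$ is not a square, some partial sum of $\mathbf d'$ must exceed the corresponding partial sum of $[k_1^{m_1}]$, and the type-D constraint forces the jump to occur at position~$1$, hence $\mathbf d'_1\ge\widetilde{\mathbf d}_1 = [k_1+1,k_1^{m_1-2},k_1-1]$; you gesture at this but do not carry it out. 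Finally, your (b) and (c) are stated as verifications that ``should'' hold rather than actual proofs; the case interactions across block boundaries (for instance when $k_1-1=k_2+1$) require care and are not addressed.

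In short: your plan is aimed in the right direction, but the crucial combinatorial tool — that a straddling run of equal even parts yields an odd-degree graded polynomial, which violates admissibility — is missing, and without it part~(d) does not close.
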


Before giving the proof, we record the following useful lemmas concerning partitions of type D.

Let $\bf{d}_A=\mathrm{Re}(\bf{d}_1, \ldots, \bf{d}_r)$ satisfy the condition (D2) above. Denote by $\#\bf{d}_A$ the number of parts in $\bf{d}_A$. Let $\bf{d}'$ be a partition of type D. We put 0's in the end of $\bf{d}'$ such that $\#\bf{d}' = \#\bf{d}_A$, and we rewrite $\bf{d}'=[\bf{d}'_1, \ldots, \bf{d}'_r]$ where $\#\bf{d}'_i = \#\bf{d}_i$. Obviously, $\bf{d}'$ falls into one of the following two cases:
\begin{itemize}
    \item[ ] \textbf{Case 1:} $\bf{d}'_i = \bf{d}_i$ for all $i = 2, \ldots, r - 1$;
    \item[ ] \textbf{Case 2:} There exists $2 \leq j \leq r-1$ such that $\bf{d}'_j \neq \bf{d}_j$.
\end{itemize}

\begin{lemma}\label{lem:Case 1}
In Case 1 above, the minimal type D partition $\bf{d}_D$, that is admissible to $\gamma_0$, is unique. Moreover,
\[
    \bf{d}_D = \mathrm{Re}\left(\widetilde{\bf{d}}_1, {\bf{d}}_2, \ldots, {\bf{d}}_{r-1}, \widetilde{\bf{d}}_r \right),
\]
where each $(\widetilde{\bf{d}}_1, \widetilde{\bf{d}}_r)$ is defined as in condition (D2) above.
\end{lemma}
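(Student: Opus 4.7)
The plan is to reduce the lemma to a parity analysis on the outer blocks $\bf{d}_1,\bf{d}_r$, followed by a verification of the admissibility square condition of Definition~\ref{def:admissible}.

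Since $m_2,\ldots,m_{r-1}$ are even, within each middle block $\bf{d}_i=[(k_i+1)^{l_i},k_i^{m_i-l_i}]$ the multiplicities $l_i$ and $m_i-l_i$ share the same parity, and after the reordering of $\bf{d}_A$ (which only merges adjacent blocks when $k_i=k_{i+1}$) the total contribution of $\bf{d}_2,\ldots,\bf{d}_{r-1}$ to the multiplicity of any fixed value is even. In Case~1 these middle blocks are frozen, so every parity defect of $\bf{d}_A$ with respect to the type~D condition must lie in $\bf{d}_1$ or $\bf{d}_r$, and any admissible type~D modification must be carried out inside $\bf{d}'_1$ and $\bf{d}'_r$. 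Because $m_1$ and $m_r$ are odd, each of $\bf{d}_1,\bf{d}_r$ has exactly one odd multiplicity; a case analysis on $(k_1,l_1)$ and $(k_r,l_r)$ modulo~$2$ pins down precisely which even value in each outer block carries the odd multiplicity.

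The next step is to identify the minimal dominance-order repair. Inside $\bf{d}'_1$, the only size-preserving rearrangements are exchanges between the multiplicities of $k_1$ and $k_1+1$; any such exchange toggles both parities but alters the total weight $|\bf{d}'_1|$ by $\pm 1$, and the same holds for $\bf{d}'_r$. Preserving the global weight $|\bf{d}'|=2n$ then forces the two single-unit exchanges to be coordinated: one unit must migrate from the bottom of $\bf{d}_r$ to the top of $\bf{d}_1$. This yields exactly $\widetilde{\bf{d}}_1=[(k_1+1)^{l_1+1},k_1^{m_1-l_1-1}]$ together with $\widetilde{\bf{d}}_r$ as in~(D2), including the sub-case $l_r=0$ in which the missing $k_r+1$ is compensated by an extra $k_r-1$. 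This is the unique minimum in the dominance order because any strictly smaller move either toggles parities in only one outer block (violating the global weight constraint) or leaves all multiplicities untoggled (retaining the type~D defects).

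Finally I would verify that $\bf{d}_D=\mathrm{Re}(\widetilde{\bf{d}}_1,\bf{d}_2,\ldots,\bf{d}_{r-1},\widetilde{\bf{d}}_r)$ is admissible to $\gamma_0$. For an even part $d$ sitting strictly inside a single block the graded polynomial $\gr_{\bf{d}_D,d}$ is controlled by coefficients of the corresponding single factor $f_i$; the single-slope Newton polygon of $f_i$ together with the monic normalization forces this polynomial to degenerate to a monomial, which is trivially a square. For an even part at a new boundary introduced by the shift the relevant coefficient is constrained by the constant term of $f_1$ or $f_r$, and the self-duality $f_i(\lambda)=f_i(-\lambda)$ again forces $\gr_{\bf{d}_D,d}$ to be a square. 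Competing Case~1 candidates of smaller or incomparable dominance are ruled out by showing that their graded polynomial at the offending even part has odd degree, hence is not a square in $\mathbb{C}[X]$. The main obstacle I expect is this last admissibility verification: it requires careful bookkeeping with the coefficients of $\chi_{\gamma_0}(\lambda)$ across the shift between $f_r$ and $f_1$, a uniform treatment of the $l_r=0$ branch, and tracking the merging that occurs when $k_i=k_{i+1}$ for consecutive blocks.
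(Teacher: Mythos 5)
Your proof captures the right high-level intuition --- parity defects are confined to the two outer blocks, the repair must shift one unit of weight from $\mathbf d_r$ to $\mathbf d_1$, and Spaltenstein's square condition is the tool that excludes "cheap" type~D repairs --- but the way the dominance comparison is actually carried out has a genuine gap.

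The key technical step in the paper's argument is an exclusion of the \emph{equality} case. One shows that for a type~D candidate $\mathbf d'$ dominating $\mathbf d_A$ in Case~1, $\sum_{i=1}^{m_1} d'_i \geq 2n_1$, and then that equality cannot hold: if it did, the $k_1$-part of $\mathbf d'_1$ would carry odd multiplicity $m_1-l_1$, forcing the $k_1$-block of $\mathbf d'$ to extend by an odd number of parts into $\mathbf d'_2$; one then computes $\operatorname{gr}_{\mathbf d',k_1}$ and sees it has odd degree, so it cannot be a square and $\mathbf d'$ is not admissible. This is exactly where Proposition~\ref{relation for type BD1*} enters, and it is the step that converts "dominates $\mathbf d_A$ and is type~D and admissible" into "dominates $\mathbf d_D$." Your proposal does gesture at the odd-degree argument in the last paragraph ("Competing Case~1 candidates ... graded polynomial at the offending even part has odd degree"), but it is not wired into the dominance comparison. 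Instead you replace it with a "single-unit exchange" heuristic: exchanges between the multiplicities of $k_1$ and $k_1{+}1$ inside $\mathbf d'_1$, coordinated to preserve total weight. This is not how the dominance order works --- it is a statement about partial sums, not about unit moves --- and it does not rule out candidates whose first positional block is \emph{not} obtained from $\mathbf d_1$ by a single unit move, nor candidates that keep $\mathbf d'_1=\mathbf d_1$ and try to satisfy type~D by overlapping the $k_1$-multiplicity with parts in $\mathbf d'_2$ (the very case the admissibility condition is needed to exclude). You need to replace the heuristic with the partial-sum argument and integrate the graded-polynomial exclusion there.

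Your final paragraph attempting to verify admissibility of $\mathbf d_D$ is both unnecessary and incorrect. The paper does not prove admissibility of $\mathbf d_D$ inside this lemma; it is established later by constructing an actual reduction of $\gamma_0$ with this partition (Proposition~\ref{prop:D}) and invoking Proposition~\ref{relation for type BD1*}. More importantly, the claim that "the single-slope Newton polygon ... forces this polynomial to degenerate to a monomial" is false: the terms of $\operatorname{gr}_{\mathbf d,d}$ are precisely the boundary coefficients of the Newton polygon, which are generically nonzero, and whether the resulting polynomial is a square depends on actual coefficient relations, not on the Newton polygon alone. This is exactly the dichotomy that conditions (D1.2.1)/(D1.2.2) encode, and the paper explicitly notes that the minimal reduction depends on more than the Newton polygon. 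Finally, you do not address the $k_1=k_r$ (i.e.\ $r=2$) sub-case, which the paper treats separately because there the two outer blocks merge and the modified partition is automatically type~D with a degree-zero graded polynomial.
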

\begin{proof}
    We only need to deal with $\left( \bf{d}_1, \bf{d}_r \right) = \left( (k_1+1)^{l_1}, k_1^{m_1-l_1}, (k_r+1)^{l_r}, k_r^{m_r-l_r} \right)$. Since $m_1 \equiv m_r \equiv 1$, we have the following cases:
    \begin{enumerate}
        \item $k_1, k_r \equiv 0$, then $l_1, l_r \equiv 0$;
        \item $k_1, k_r \equiv 1$, then $l_1, l_r \equiv 1$;
        \item $k_1 \equiv 1$, $k_r \equiv 0$, then $l_1 \equiv 1$, $l_r \equiv 0$;
        \item $k_1 \equiv 0$, $k_r \equiv 1$, then $l_1 \equiv 0$, $l_r \equiv 1$.
    \end{enumerate}
    We only prove for case (1), the other cases follow similarly.

    First, we assume $k_1 \neq k_r$, let $\bf{d}'=[d'_1, d'_2, \ldots]$ be a type D partition that $\bf{d}' > [\bf{d}_1, \bf{d}_r]$. Then $\sum_{i=1}^{m_1} d'_i \geq m_1k_1+l_1=2n_1$. 

    We claim that equality cannot hold. Otherwise, 
    \[
        [d'_1,\ldots d'_{2n_1}] = [(k_1+1)^{l_1}, k_1^{m_1-l_1}].
    \]
    Since $\bf{d}'$ is type D, we have $d'_{2n_1+1} = \ldots = d'_{2n_1+q} = k_1$ for some $q \equiv 1$. We may compute $\operatorname{gr}_{\bf{d}', k_1}$ in this case, which is a polynomial of odd degree, so this cannot happen. Then $\sum_{i=1}^{m_1} d'_i > 2n_1$, since $\bf{d}'$ is type D, $[d'_1, \ldots, d'_{2n_1}] \geq \widetilde{\bf{d}}_1$, and by the construction of $\widetilde{\bf{d}}_r$, we conclude that $\bf{d}' \geq \bf{d}_D=[\widetilde{\bf{d}}_1, \widetilde{\bf{d}}_r]$.

    If $k_1 = k_r$, in this case, $r=2$. Then
    \[
        \mathrm{Re}\left( (k_1+1)^{l_1}, k_1^{m_1-l_1}, (k_r+r)^{l_r}, k_r^{m_r-l_r} \right) = [(k_1+1)^{l_1+l_r}, k_1^{m_1+m_r-l_1-l_r}]
    \]
    which is automatically type D. and the computation of $\operatorname{gr}_{\bf{d}_D, k_1}$ show that it is a polynomial of degree 0, thus, it is admissible to $\gamma_0$. Moreover, it is easy to see that it coincides with $\mathrm{Re}\left( \widetilde{\bf{d}}_1, \widetilde{\bf{d}}_2 \right)$.
\end{proof}

\begin{lemma}\label{lem:Case 2}
    In Case 2, the type D partition $\bf{d}'$ is either strictly larger than the minimal partition from Case 1, or there exists a block
    \[
        d'_{2i_0} > d'_{2i_0+1} = d'_{2i_0+2} = \cdots = d'_{2i_1} > d'_{2i_1+1}, \quad \text{with } d'_{2i_1} \equiv 0,
    \]
    lying in the interval $[\bf{d}'_j, \bf{d}'_{j+1}]$ for some $j$, but not contained in $[\bf{d}_j, \bf{d}_{j+1}]$. Moreover, there exists a positive odd integer $k$ such that
    \[
        d'_{2i_0+1}, \ldots, d'_{2i_0 + k} \in \bf{d}'_j, \qquad d'_{2i_0 + k + 1}, \ldots, d'_{2i_1} \in \bf{d}'_{j+1}.
    \]
\end{lemma}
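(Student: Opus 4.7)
The plan is to assume $\bf{d}'$ is in Case 2 and is not strictly larger (in dominance) than the Case 1 minimal partition $\bf{d}_D = \mathrm{Re}(\widetilde{\bf{d}}_1, \bf{d}_2, \ldots, \bf{d}_{r-1}, \widetilde{\bf{d}}_r)$, and to extract the claimed straddling block. Throughout, I write $s_k(\bf{d}) := d_1 + \cdots + d_k$ and $P_j := m_1 + \cdots + m_j$. A direct inspection of how $\widetilde{\bf{d}}_1$ (one $k_1$ promoted to $k_1+1$ at position $l_1+1$) and $\widetilde{\bf{d}}_r$ (a single part decreased by $1$ near the end of $\bf{d}_r$) differ from $\bf{d}_1$ and $\bf{d}_r$ shows that $s_k(\bf{d}_D) - s_k(\bf{d}_A)$ equals $1$ on a middle interval $[\alpha,\beta]$ and $0$ outside it. Combining $\bf{d}' \ge \bf{d}_A$ with the hypothesis $\bf{d}' \not\ge \bf{d}_D$ (which, since $\bf{d}' \ne \bf{d}_D$ by Case 2, is equivalent to ``not strictly larger'') then yields an index $k^* \in [\alpha,\beta]$ with $s_{k^*}(\bf{d}') = s_{k^*}(\bf{d}_A)$.

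Next, I would promote $k^*$ to an interior boundary $P_j$ with $1 \le j \le r-1$. Because $\bf{d}_A$ is piecewise constant on $[P_{j'-1}+1,\ P_{j'-1}+l_{j'}]$ (value $k_{j'}+1$) and on $[P_{j'-1}+l_{j'}+1,\ P_{j'}]$ (value $k_{j'}$), and $\bf{d}'$ is non-increasing with $\bf{d}' \ge \bf{d}_A$, the equality $s_k(\bf{d}') = s_k(\bf{d}_A)$ at an interior point of a constant piece of $\bf{d}_A$ propagates to the right endpoint of that piece, and inductively to a block boundary $P_j$. The core combinatorial step is then to show that some such boundary carries a straddle $d'_{P_j} = d'_{P_j+1}$. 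I would argue this by contradiction: if $d'_{P_j} > d'_{P_j+1}$ at every boundary where the sum-equality holds, then combining those strict drops with $\bf{d}' \ge \bf{d}_A$ and the extremal property of each balanced $\bf{d}_{j'}$ (the unique dominance-minimum partition of $2n_{j'}$ into $m_{j'}$ positive parts) would inductively force $\bf{d}'_{j'} = \bf{d}_{j'}$ for all $j'$, contradicting Case 2. Let $[2i_0+1,\ 2i_1]$ be the maximal constant block of $\bf{d}'$ containing positions $P_j$ and $P_j+1$, and set $k := P_j - 2i_0$.

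Finally, the parity and value conditions: since $m_1$ is odd while $m_2,\ldots,m_{r-1}$ are even, $P_j$ is odd for every $1 \le j \le r-1$; a short type-D multiplicity check verifies that the block $[2i_0+1,\ 2i_1]$ starts at an odd position, so $2i_0$ is even, and hence both $k = P_j - 2i_0$ and $2i_1 - 2i_0 - k$ are positive odd integers. The common value $d'_{2i_1}$ must be even, for otherwise a local unit-pair modification of $\bf{d}'$ within the straddle would produce a strictly smaller type-D partition still dominating $\bf{d}_A$, contradicting the hypothesis that $\bf{d}'$ is not strictly larger than $\bf{d}_D$. The main obstacle is the inductive pinning-down of $\bf{d}'_{j'} = \bf{d}_{j'}$ for all $j'$ under the no-straddle assumption; there the precise combinatorics of the balanced partitions $\bf{d}_{j'}$, the sum equalities at block boundaries, and the (D2) parity pattern must all be used in concert.
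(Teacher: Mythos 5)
Your overall strategy (reduce to a partial-sum equality at some block boundary $P_j$, then extract a straddling block and analyze its parities) is a reasonable global version of what the paper does, but it diverges in route and has several real gaps. The paper's own argument is local: it looks only at the first boundary $P_1=m_1$, splits on whether $\sum_{i=1}^{m_1}d'_i$ equals or exceeds $2n_1$, gets either $[d'_1,\ldots,d'_{m_1}]\ge\widetilde{\bf d}_1$ or a straddle of value $k_1$ spilling $q$ (odd) parts into $\bf d'_2$, and then hands the remaining blocks to the induction inside Proposition~\ref{prop.type D minimal reduction}. Crucially, the evenness of the straddle value there is free: after the case-split into the four parity classes of $(k_1,l_1,k_r,l_r)$, the case being worked has $k_1\equiv 0$, so the block value is even by hypothesis. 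You instead try to do everything at one boundary $P_j$ for arbitrary $j$, which forces you to re-derive facts that the induction would give you for free.

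The concrete problems in your proposal are these. First, your Step~5 (``if no boundary carries a straddle, then $\bf d'_{j'}=\bf d_{j'}$ for all $j'$, contradicting Case~2'') is the real content of the lemma and is left entirely open — you explicitly label it ``the main obstacle,'' so this is not a proof. Second, your Step~7 argument for $d'_{2i_1}\equiv 0$ does not close: producing a strictly smaller type-D partition $\bf d''<\bf d'$ with $\bf d''\ge\bf d_A$ does not contradict the assumption ``$\bf d'\not>\bf d_D$,'' because $\bf d'$ was never assumed minimal among type-D partitions dominating $\bf d_A$; and to turn it into a contradiction via the minimality of $\bf d_D$ you would need to know $\bf d''$ is \emph{admissible}, which is precisely the kind of fact being established — so the argument is circular. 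The evenness should come, as in the paper, from the parity case-split (the straddle value is forced to be $k_j$ or $k_j+1$, and the relevant one is even in the case under consideration). Third, your Step~6 claim that the straddling block ``starts at an odd position'' by a ``type-D multiplicity check'' is not automatic: a type-D partition can have an even-valued block starting at an even position (e.g.\ $[5,4,4,3,2,2]$ has its block of $4$'s start at position $2$); you need the specific structure of $\bf d'$ relative to $\bf d_A$ here, not just the type-D condition. Finally, Step~4's propagation of the partial-sum equality $s_{k^*}(\bf d')=s_{k^*}(\bf d_A)$ to a block boundary $P_j$ has a subtlety: propagation within a constant piece of $\bf d_A$ works, but such a piece can end at $P_{j-1}+l_j$, and continuing past that internal break requires a separate argument.
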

\begin{proof}
    Similarly as in the proof of Lemma~\ref{lem:Case 1}, we have the following cases:
    \begin{enumerate}
        \item $k_1, k_r \equiv 0$, then $l_1, l_r \equiv 0$;
        \item $k_1, k_r \equiv 1$, then $l_1, l_r \equiv 1$;
        \item $k_1 \equiv 1$, $k_r \equiv 0$, then $l_1 \equiv 1$, $l_r \equiv 0$;
        \item $k_1 \equiv 0$, $k_r \equiv 1$, then $l_1 \equiv 0$, $l_r \equiv 1$.
    \end{enumerate}
    We only prove for case (1), the other cases follow similarly.
    
    Let $\bf{d}'=[d'_1,d'_2, \ldots]$, $\sum_{i=1}^{m_1} d'_i \geq 2n_i$, if the equality holds, then we have $d''_{2n_1+1} = \ldots = d''_{2n_1+q} = k_1$ for some $q \equiv 1$. Otherwise, $\sum_{i=1}^{m_1} d'_i \geq \widetilde{\bf{d}}_1$, moreover, since $\widetilde{\bf{d}}_r < \bf{d}_r$, we conclude $\bf{d}' \geq \bf{d}_D$.   
\end{proof}

\begin{proof}[Proof of Proposition~\ref{prop.type D minimal reduction}]
    Without loss of generality, we assume there is no 
    \[
        \left( \bf{d}_i, \bf{d}_{i+1} \right) = \left( (k_i+1)^{l_i}, k_i^{m_i-l_i}, (k_{i+1}+1)^{l_{i+1}}, k_{i+1}^{m_{i+1}-l_{i+1}} \right)
    \]
    such that $k_i = k_{i+1}$. Otherwise, from the proof of Lemma~\ref{lem:Case 1}, we replace $\left( \bf{d}_i, \bf{d}_{i+1} \right)$ by 
    \[
        \left( (k_i+1)^{l_i+l_{i+1}}, k_i^{m_i+m_{i+1}-l_i-l_{i+1}} \right). 
    \]
    Then, $\bf{d}_A=[\bf{d}_1,\bf{d}_2, \ldots]$ and $\bf{d}_D=[ \widetilde{\bf{d}}_1, \widetilde{\bf{d}}_2, \ldots ]$.

    Let $\bf{d}'=[d'_1, d'_2, \ldots]=[\bf{d}'_1, \bf{d}'_2, \ldots]$ be an admissible type D partition that larger than $\bf{d}_A$, and we put 0's in $\bf{d}'$ such that $\#\bf{d}'_i=\#\bf{d}_i$. If $\bf{d}_1$ satisfies condition (D1.2.2), i.e., $\gr_{\bf{d}_A, k_1}(\chi_{\gamma_0}(\lambda))$ is not a square, then there exists $1 \leq j \leq m_1$ such that $\sum_{i=1}^{j} d'_i > \sum_{i=1}^{j} k_1$. Since $\bf{d}'$ is type D and $\bf{d}_1=[k_1^{m_1}]$, then $j =1$ and $\bf{d}'_1 \geq \widetilde{\bf{d}}_1$. For other conditions (D1.1) and (D1.2.1), $\widetilde{\bf{d}}_1 = \bf{d}_1$, then $\bf{d}'_1 \geq \widetilde{\bf{d}}_1$.

    If $\bf{d}_1$ satisfies condition (D2), it has been delt in Lemma~\ref{lem:Case 1},~\ref{lem:Case 2}. Note that the second case in Lemma~\ref{lem:Case 2} cannot happen since $\operatorname{gr}_{\bf{d}', d'_{2i_1}}$ is a polynomial of odd degree, which can not be a square. Thus, $\bf{d}'_1 \geq \widetilde{\bf{d}}_1$. By induction, we conclude.
\end{proof}

In type D, two very even orbits which are conjugate under $\operatorname{O}_{2n}$ will associate to the same partition, thus in this case, it is not enough to determine the minimal reduction is unique by showing that the partition of the minimal reductions are unique. However, the following lemma will adjust this case.

Let $\bf{O}^i$, $i= \mathrm{I}, \mathrm{II}$ be two very even orbits which are conjugate by $\operatorname{O}_{2n}$, and we use $\bf{d}$ to denote their common partition. We consider $\left(L^+{\mathfrak{so}_{2n}}\right)_{\bf{O}^i}$ defined as before and denote the common image of $\left(L^+\mathfrak{so}_{2n}\right)_{\bf{O}^i}$ in $L^+\mathfrak{c}$ to be $L^+\mathfrak{c}_{\bf{d}}$. Notice that $\left(L^+\mathfrak{so}_{2n}\right)_{\bf{O}^i}$ and $L^+\mathfrak{c}_{\bf{O}}$ are locally closed.

\begin{lemma}
    For any $\gamma\in \left(L^+\mathfrak{so}_{2n}\right)_{\bf{O}^\mathrm{I}}$, and $g\in L\operatorname{SO}_{2n}$, then $$\Ad_{g^{-1}}(\gamma)\notin \left(L^+\mathfrak{so}_{2n}\right)_{\bf{O}^\mathrm{II}}.$$
\end{lemma}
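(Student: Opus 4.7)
The plan is to argue by contradiction, reducing the statement to the claim that the centralizer $Z_{L\operatorname{O}_{2n}}(\gamma)$ of a regular semisimple $\gamma$ lies entirely inside $L\operatorname{SO}_{2n}$.

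Since $\bf{O}^{\mathrm{I}}$ and $\bf{O}^{\mathrm{II}}$ fuse into a single $\operatorname{O}_{2n}$-orbit, I first fix an element $\sigma \in \operatorname{O}_{2n}(\mathbb{C})$ with $\det\sigma = -1$ such that $\sigma\,\bf{O}^{\mathrm{II}}\,\sigma^{-1} = \bf{O}^{\mathrm{I}}$. Suppose for contradiction that some $g \in L\operatorname{SO}_{2n}$ satisfies $\Ad_{g^{-1}}(\gamma) \in (L^+\mathfrak{so}_{2n})_{\bf{O}^{\mathrm{II}}}$, and set $h := g\sigma^{-1}$. Then $h \in L\operatorname{O}_{2n}\setminus L\operatorname{SO}_{2n}$ (since $\det h = -1$), and
\[
    \Ad_{h^{-1}}(\gamma) \;=\; \sigma\cdot\Ad_{g^{-1}}(\gamma)\cdot \sigma^{-1} \;\in\; (L^+\mathfrak{so}_{2n})_{\bf{O}^{\mathrm{I}}}.
\]

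Because $\gamma$ and $\Ad_{h^{-1}}(\gamma)$ are both in $L^{\heartsuit}\mathfrak{so}_{2n}$ and share the same characteristic polynomial, Lemma~\ref{lem:single conj class} applied to $\operatorname{SO}_{2n}$ yields $g' \in L\operatorname{SO}_{2n}$ with $\Ad_{g'^{-1}}(\gamma) = \Ad_{h^{-1}}(\gamma)$. Hence $z := hg'^{-1} \in Z_{L\operatorname{O}_{2n}}(\gamma)$ and $\det z = \det h = -1$. The proof therefore reduces to showing $Z_{L\operatorname{O}_{2n}}(\gamma) \subseteq L\operatorname{SO}_{2n}$, which would force $\det z = 1$ and give the contradiction.

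For this final centralizer claim, I would base change to $\overline{\sK}$, where $\gamma$ diagonalizes with distinct eigenvalues occurring in antipodal pairs $\pm\lambda_1,\ldots,\pm\lambda_n$ (using that $\gamma$ is anti-self-adjoint with respect to the symmetric form). Any $h \in Z_{\operatorname{O}_{2n}(\overline{\sK})}(\gamma)$ preserves each one-dimensional eigenspace $V_{\pm\lambda_i}$ and acts there by a scalar $\mu_i^{\pm}$. The symmetric form pairs $V_{\lambda_i}$ non-degenerately with $V_{-\lambda_i}$ (again by anti-self-adjointness), and the orthogonality condition $h^{\vee} g h = g$ forces $\mu_i^{+}\mu_i^{-} = 1$ for every $i$. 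Consequently $\det h = \prod_i \mu_i^{+}\mu_i^{-} = 1$, which completes the proof. The only genuinely delicate point is justifying that $\det:L\operatorname{O}_{2n} \to \{\pm 1\}$ is $\{\pm 1\}$-valued rather than $\sK^{\times}$-valued, but this is immediate from the identity $(\det h)^2 = 1$ derived from $h^{\vee}g h = g$.
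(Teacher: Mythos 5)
Your reduction to the centralizer claim $Z_{L\operatorname{O}_{2n}}(\gamma)\subseteq L\operatorname{SO}_{2n}$ is a correct statement, and the eigenspace verification over $\overline{\sK}$ in the final paragraph is sound. However, there is a genuine gap in the step that invokes Lemma~\ref{lem:single conj class}.

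The lemma requires $\chi(\gamma)=\chi(\gamma')$, where $\chi$ for $G=\operatorname{SO}_{2n}$ is the Chevalley invariant, which is \emph{not} determined by the monic characteristic polynomial alone: the Pfaffian is an independent $\operatorname{SO}_{2n}$-invariant (its square is the top coefficient, so there is a sign ambiguity that $\chi$ must resolve, as the paper emphasizes when defining $\chi_\gamma$ for $\operatorname{SO}_{2n}$). Conjugation by your $h=g\sigma^{-1}$, which has $\det h=-1$, satisfies $\operatorname{Pf}\!\bigl(\Ad_{h^{-1}}(\gamma)\bigr)=\det(h)^{-1}\operatorname{Pf}(\gamma)=-\operatorname{Pf}(\gamma)$. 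Since $\gamma$ is regular semisimple, $\operatorname{Pf}(\gamma)\ne 0$, so $\gamma$ and $\Ad_{h^{-1}}(\gamma)$ have different Chevalley invariants, and in particular they are not $\operatorname{SO}_{2n}(\overline{\sK})$-conjugate. The very first sentence of the proof of Lemma~\ref{lem:single conj class} already fails, so the desired $g'\in L\operatorname{SO}_{2n}$ does not exist, and the contradiction is never reached. This is not a repairable technicality: the difficulty the lemma is really addressing is precisely the Pfaffian sign that your $\sigma$-twist flips.

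The paper's proof does something quite different: it invokes the Spaltenstein relation (Proposition~\ref{relation for type BD1*}) and the associated graded polynomials $\operatorname{gr}_{\mathbf d,d}$ to exhibit the map $(L^+\mathfrak{so}_{2n})_{\mathbf O^{\mathrm I}}\sqcup(L^+\mathfrak{so}_{2n})_{\mathbf O^{\mathrm{II}}}\to L^+\mathfrak{c}_{\mathbf d}$ as factoring through a connected double cover of $L^+\mathfrak{c}_{\mathbf d}$ determined by a compatible choice of square roots, sending the two strata to the two distinct components. Since $\Ad_{g^{-1}}$ with $g\in L\operatorname{SO}_{2n}$ fixes the image in $L^+\mathfrak{c}_{\mathbf d}$ and hence in the cover, the two strata cannot be exchanged. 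This encodes exactly the Pfaffian/square-root bookkeeping that your argument inadvertently discards.
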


\begin{proof}
    This proof is a local analogue of Proposition 5.3 (case (4) in the proof) of \cite{BK18} or Proposition 4.24 of \cite{WWW25}. In particular, if we regard element $f$ in $L^+\mathfrak{c}_{\bf{d}}$ as characteristic polynomials, then for any $d\in \bf{d}$, the associate polynomial $\operatorname{gr}_{\bf{d},d}$, defined in Definition \ref{associated polynomial}, by Proposition~\ref{relation for type BD1*}, is a square. Now since $\bf{d}$ is a very even partition, let $d_1=\max\{d\in \bf{d}\}$ and $d_r=\min\{d\in \bf{d}\mid d\neq 0\}$, we see that $\operatorname{gr}_{\bf{d},d_1}$ is a monic polynomial and the constant term of $\operatorname{gr}_{\bf{d},d_r}$ is given by the square of the degree $\frac{\mid \bf{d} \mid}{2}$ term of the Pfaffian. While if $d_i>d_{i+1}\in \bf{d}$, we see that the constant term of $\operatorname{gr}_{\bf{d},d_i}$ equals to the coefficient of leading term of $\operatorname{gr}_{\bf{d},d_{i+1}}$. By a similar argument as in Proposition 5.3 (case (4) in the proof) of \cite{BK18} or Proposition 4.25 of \cite{WWW25}, we see that the map 
    \[
        \left(L^+\mathfrak{so}_{2n}\right)_{\bf{O}^\mathrm{I}}\bigsqcup  \left(L^+\mathfrak{so}_{2n}\right)_{\bf{O}^\mathrm{II}}\longrightarrow L^+\mathfrak{c}_{\bf{d}}
    \] factors through a cover of $L^+\mathfrak{c}_{\bf{d}}$, which is a disjoint union of two isomorphism spaces. Moreover, this map sends $\left(L^+\mathfrak{so}_{2n}\right)_{\bf{O}^\mathrm{I}}$ or $\left(L^+\mathfrak{so}_{2n}\right)_{\bf{O}^\mathrm{II}}$ to different components. However, an adjoint action of $g$ on $\gamma$ would not change its image in $L^+\mathfrak{c}_{\bf{d}}$ as well as its image in the cover, which means $\Ad_{g^{-1}}(\gamma)\notin \left(L^+\mathfrak{so}_{2n}\right)_{\bf{O}^\mathrm{II}}$.
\end{proof}

\begin{corollary}\label{very even}
    For any $\gamma\in L^{\heartsuit}\mathfrak{so}_{2n}$, if $\RT^{\mathrm{A}}_{\min}(\gamma)$ is a nilpotent orbit whose partition is very even, then $\operatorname{RT}_{\min}(\gamma)$ is a singleton.
\end{corollary}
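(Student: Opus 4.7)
The plan is to combine the uniqueness of the minimal admissible type~D partition dominating $\bf{d}_A$ (Proposition~\ref{prop.type D minimal reduction}) with the preceding Lemma, which forbids an $L\operatorname{SO}_{2n}$-conjugation from carrying an integral element of $\bf{O}^{\mathrm{I}}$ into an integral element of $\bf{O}^{\mathrm{II}}$. Writing $\bf{d} := \RT^{\mathrm{A}}_{\min}(\gamma)$, the hypothesis that $\bf{d}$ is very even forces each balanced piece $\bf{d}_i$ of $\bf{d}_A$ to be very even; in particular $l_i = 0$ and $k_i$ is even for every factor $f_i$ of $\chi_{\gamma}(\lambda)$, since $k_i$ and $k_i+1$ have opposite parities and odd parts are forbidden in $\bf{d}$.

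First I would inspect recipe~($\star$) of \S\ref{subsec:modify partition} under these structural constraints. Two mutually exclusive cases arise: either some factor falls into case (D1.2.2) or some pair of factors falls into (D2), in which case $\bf{d}_D$ acquires an odd part and therefore labels a unique nilpotent orbit, making $\RT_{\min}(\gamma)$ trivially a singleton; or every factor has $m_i$ even and satisfies (D1.2.1), in which case $\bf{d}_D = \bf{d}$ remains very even. Only the second case is substantive. In it, combining Proposition~\ref{relation for type BD1*} (admissibility of any reduction's partition) with Proposition~\ref{prop.type D minimal reduction} forces the partition of any orbit in $\RT_{\min}(\gamma)$ to equal $\bf{d}$, and hence $\RT_{\min}(\gamma) \subseteq \{\bf{O}^{\mathrm{I}}, \bf{O}^{\mathrm{II}}\}$.

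Finally I would rule out simultaneous occurrence by invoking the preceding Lemma. If both orbits lay in $\RT(\gamma)$, one could choose $g, g' \in L\operatorname{SO}_{2n}$ with $\gamma_1 := g^{-1}\gamma g \in (L^+\mathfrak{so}_{2n})_{\bf{O}^{\mathrm{I}}}$ and $(g')^{-1}\gamma g' \in (L^+\mathfrak{so}_{2n})_{\bf{O}^{\mathrm{II}}}$; setting $h := g(g')^{-1} \in L\operatorname{SO}_{2n}$ would yield $\Ad_{h^{-1}}(\gamma_1) \in (L^+\mathfrak{so}_{2n})_{\bf{O}^{\mathrm{II}}}$, contradicting the Lemma. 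Combined with the realisation of $\bf{d}_D$ as a genuine reduction provided by Proposition~\ref{prop:D}, exactly one of the two orbits occurs, so $\RT_{\min}(\gamma)$ is a singleton. The substantive geometric step—distinguishing the two $\operatorname{SO}_{2n}$-conjugacy classes of very even orbits—has already been carried out in the preceding Lemma, so I do not anticipate any further essential obstacle beyond the partition bookkeeping sketched above.
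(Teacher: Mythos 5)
Your proof is correct and follows the approach the paper clearly intends: the corollary is placed directly after the Lemma and is meant to follow by assembling that Lemma with Propositions~\ref{relation for type BD1*}, \ref{prop.type D minimal reduction} and \ref{prop:D} so as to reduce to the alternative $\RT_{\min}(\gamma)\subseteq\{\bf{O}^{\mathrm{I}},\bf{O}^{\mathrm{II}}\}$ and then rule out both orbits occurring. One small computational slip in your last step: to carry $\gamma_1 = g^{-1}\gamma g$ to $(g')^{-1}\gamma g'$ under $\Ad_{h^{-1}}(\gamma_1)=h^{-1}\gamma_1 h$, the conjugating element must be $h = g^{-1}g'$ rather than $g(g')^{-1}$; with that correction the contradiction with the Lemma goes through exactly as you describe.
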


\vspace{0.3cm}
\paragraph{\textbf{$(\star\star)$ Construction in type B:}}

Suppose the constant term of $\chi_{\gamma}(\lambda)$ has order $m$. Recall that in the type B case, the type A minimal reduction is given by
\[
    \bf{d}_A = \mathrm{Re}\left(\bf{d}_1, \bf{d}_2, \ldots \right) \cup [1].
\]
We define the modified partition $\bf{d}_B$ as follows:

\begin{enumerate}
    \item[\textbf{(B1)}] If $m\equiv 0$, define
    \[
        \bf{d}_B := \left[ \bf{d}_D, 1 \right],
    \]
    where $\mathbf d_D$ is obtained from $(\mathbf d_1,\mathbf d_2,\ldots)$ by applying the same modification rules as in the type~D case.

    \item[\textbf{(B2)}] If $m\equiv 1$, suppose that there exists an index $r>1$ such that $m_r \equiv 1$ and $m_j \equiv 0$ (then, $l_j \equiv 0$) for all $j > r$. We then modify
    \[
        \left(\bf{d}_r, \bf{d}_{r+1}, \ldots \right) \cup [1]
    \]
    to be
    \[
        \left[\widetilde{\bf{d}}_r, \bf{d}_{r+1}, \ldots \right]
    \]
    where
    \[
        \widetilde{\bf{d}}_r=\left[(k_r+1)^{l_r+1}, k_r^{m_r-l_r-1}\right]
    \]
    The remaining part $\left(\bf{d}_1, \ldots, \bf{d}_{r-1} \right)$ is modified in the same manner as in the type~D case.
\end{enumerate}

\begin{proposition}\label{prop.type B minimal reduction}
With the notation as above, the $\bf{d}_B$ is the unique minimal admissible partition of type~B dominating $\bf{d}_A$, i.e., $\bf{d}_B \ge \bf{d}_A$ and for any admissible partition $\mathbf d'_B$ of type~D with $\bf{d}'_B \ge \bf{d}_A$, one has $\bf{d}'_B \ge \bf{d}_B$.
\end{proposition}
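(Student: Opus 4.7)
The plan is to adapt the type D argument from Proposition~\ref{prop.type D minimal reduction} and Lemmas~\ref{lem:Case 1}--\ref{lem:Case 2} to the type B setting, treating the trailing part $1$ of $\bf{d}_A$ as a virtual odd factor. The guiding observation is that the admissibility condition of Proposition~\ref{relation for type BD1*} only constrains blocks of even parts, so the extra $1$ plays a passive role in case (B1), while in case (B2) it must be actively absorbed into the ``right endpoint'' block $\widetilde{\bf{d}}_r$ to compensate for the unpaired odd-valuation factor $f_r$.

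First I would verify that $\bf{d}_B$ is admissible and dominates $\bf{d}_A$. For every even block $d \equiv 0$ in $\bf{d}_B$, the polynomial $\gr_{\bf{d}_B, d}$ coincides with the one produced by the type D construction applied to the corresponding factor(s), hence is a square by the same computation as in Proposition~\ref{prop.type D minimal reduction}. The extra $1$ is odd and enters no admissibility check. In case (B2), the choice of $r$ as the largest index with $m_r \equiv 1$ ensures that $m_1, \ldots, m_{r-1}$ contain an even number of odd entries, so the type D modifications on the front part are well-defined. Dominance $\bf{d}_B \geq \bf{d}_A$ is immediate from the construction, as each local modification only shifts boxes upward.

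Next I would prove minimality. Let $\bf{d}'_B$ be any admissible type B partition with $\bf{d}'_B \geq \bf{d}_A$. In case (B1), a parity argument shows $\bf{d}'_B$ must contain at least one part equal to $1$: since $|\bf{d}'_B| = 2n+1$ is odd and even parts have even multiplicity, the odd parts of $\bf{d}'_B$ must sum to an odd integer; combined with dominance from $\bf{d}_A = [\bf{d}_A^{(D)}, 1]$, the smallest part must be exactly $1$. Excising one copy of $1$ yields an admissible type D partition $\tilde{\bf{d}}'$ of $2n$ dominating $\bf{d}_A^{(D)}$, and Proposition~\ref{prop.type D minimal reduction} gives $\tilde{\bf{d}}' \geq \bf{d}_D$, whence $\bf{d}'_B \geq \bf{d}_B$. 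In case (B2), I would run the dichotomy of Lemmas~\ref{lem:Case 1}--\ref{lem:Case 2}, with $\widetilde{\bf{d}}_r$ (now absorbing the extra $1$) playing the role of the right endpoint and $\widetilde{\bf{d}}_1$ the left endpoint, and then induct downward in $r$.

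The main obstacle is case (B2). The degree-parity argument from the proof of Lemma~\ref{lem:Case 1}---that an odd-degree $\gr_{\bf{d}', d}$ cannot be a square---must be re-examined at the endpoint block involving $\widetilde{\bf{d}}_r$, since the absorbed $1$ shifts the multiplicity of the smallest part of that block and hence the degree of the associated graded polynomial. Once this bookkeeping is performed carefully, the ``odd degree forbids square'' obstruction rules out the critical extremal configuration and forces $\bf{d}'_B \geq \bf{d}_B$, completing the proof.
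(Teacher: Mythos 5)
The proposal hinges, in case (B1), on the claim that every admissible type~B partition $\mathbf d'_B$ dominating $\mathbf d_A$ must contain a part equal to $1$, which it justifies by a parity argument plus dominance. This claim is false, and the inference is invalid. The parity constraint on a type~B partition of $2n+1$ only shows that $\mathbf d'_B$ has an odd number of odd parts; it does not produce a part equal to $1$. Dominance over $\mathbf d_A=[\mathbf d_A^{(D)},1]$ only gives $\ell(\mathbf d'_B)\le \ell(\mathbf d_A)=m+1$, and the last part is forced to equal $1$ \emph{only when equality holds}; when $\ell(\mathbf d'_B)<m+1$ the smallest part can be arbitrarily large. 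A concrete instance: take $\mathbf d_A=[2,2,1]$ and $\mathbf d'_B=[5]$, which is an admissible type~B partition dominating $\mathbf d_A$ with no part equal to $1$ (and here $\mathbf d_B=[2,2,1]$, so the conclusion of the proposition still holds, but the ``excise a $1$'' step has nothing to act on). Since excising a $1$ is exactly the mechanism by which you reduce to Proposition~\ref{prop.type D minimal reduction}, this gap is fatal to your argument in case (B1) as written.

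The paper's own proof of this statement is a single sentence asserting that the argument is analogous to that of Proposition~\ref{prop.type D minimal reduction}, i.e.\ one runs the same block-by-block prefix-sum comparison directly on $\mathbf d'_B$ (padded with zeros to length $\ell(\mathbf d_A)$), treating the trailing $[1]$ as an extra block that satisfies the required inequality trivially in case (B1) and is absorbed into $\widetilde{\mathbf d}_r$ in case (B2). That route never needs a literal part equal to $1$ inside $\mathbf d'_B$, so it avoids your gap. Your plan for case (B2) is on the right track --- you correctly identify that the degree/parity bookkeeping for $\gr_{\mathbf d',d}$ at the right endpoint changes when the extra box is absorbed into $\widetilde{\mathbf d}_r$ --- but it is left as a sketch without verifying that the ``odd degree forbids square'' dichotomy of Lemmas~\ref{lem:Case 1}--\ref{lem:Case 2} still closes, which is precisely the nontrivial content here. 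As it stands, (B1) contains a false step and (B2) is incomplete; the fix is to abandon the excision shortcut and carry out the block-by-block dominance argument directly, as in the type~D proof.
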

\begin{proof}
The argument is analogous to the proof of Proposition~\ref{prop.type D minimal reduction}.
\end{proof}

\subsection{Newton Polygon of Single Slope}\label{single slope}

Assume that
\[
    f(\lambda) = \lambda^{2n} + a_2 \lambda^{2n - 2} + \cdots + a_{2n - 2} \lambda^2 + a_{2n},
\]
with $\ord_t a_{2n} = m > 0$ and 
\[
    \ord_t a_{2i} \geq \left\lceil 2i \cdot \frac{m}{2n} \right\rceil \quad \text{for all} \quad 1 \leq i \leq n. 
\]
Then the Newton polygon of $f(\lambda)$ consists of a single segment.

We first consider the case where $\gamma_0 = \operatorname{R}(f(\lambda))$ is the companion matrix of $f(\lambda)$, acting on the module
\[
    \mathcal{E}_0 := \frac{\sO[\lambda]}{(f(\lambda))},
\]
with basis $\{1, \lambda, \ldots, \lambda^{2n-1}\}$.

Define an involution $\sigma$ on $\mathcal{E}_0$ by $\sigma(\lambda^i) = (-1)^i \lambda^i$. Then we define a pairing $g_0$ on $\mathcal{E}_0$ by
\[
    (g_0)_{i,j} := g_0(\lambda^{i-1}, \lambda^{j-1}) := \operatorname{Tr}\left( \frac{\lambda \cdot \lambda^{i-1} \cdot \sigma(\lambda^{j-1})}{f'(\lambda)} \right).
\]
This defines a symmetric bilinear pairing $g_0$ on $\mathcal{E}_0$ such that $g_0 \gamma_0$ is skew-symmetric. However, note that $g_0$ is not perfect over $\sO$.

\begin{lemma}\label{order of g_0}
    $\det g_0=a_{2n}$ and $\ord_t (g_0)_{i,j}\geq \ord_ta_{i+j-2n}$ for $i+j\geq 2n+1$.
\end{lemma}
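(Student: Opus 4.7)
The plan is to reduce everything to the shifted power sums
\[
    s_q \;:=\; \operatorname{Tr}\!\left(\frac{\lambda^{2n-1+q}}{f'(\lambda)}\right), \qquad q \ge 0.
\]
Using $\sigma(\lambda^{j-1}) = (-1)^{j-1}\lambda^{j-1}$, one has $(g_0)_{i,j} = (-1)^{j-1}\operatorname{Tr}\!\left(\lambda^{i+j-1}/f'(\lambda)\right) = (-1)^{j-1}\, s_{i+j-2n}$ whenever $i+j\ge 2n$, and $(g_0)_{i,j}=0$ otherwise. I would first record the classical identity $s_0=1$, observe that self-duality of $f$ forces $s_q = 0$ for all odd $q$, and derive, from the relation $\lambda^{2n} = -\sum_{l=1}^{n} a_{2l}\,\lambda^{2n-2l}$ in $\mathcal E_0$, the recurrence
\[
    s_q \;=\; -\sum_{l=1}^{\lfloor q/2\rfloor} a_{2l}\, s_{q-2l}, \qquad q\ge 1.
\]

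To prove $\ord_t(g_0)_{i,j} \ge \ord_t a_{i+j-2n}$, I would induct on $q = i+j-2n$, following the template of Lemma~\ref{pairing g in skew case}. The base case $q=1$ is vacuous since $s_1=0=a_1$. For $q\ge 2$, the recurrence together with the inductive hypothesis $\ord_t s_{q-2l}\ge \ord_t a_{q-2l}$ gives $\ord_t s_q \ge \min_{l}\bigl(\ord_t a_{2l} + \ord_t a_{q-2l}\bigr)$, and the single-slope hypothesis $\ord_t a_{2\alpha}\ge \alpha m/n$ forces the right-hand side to be at least $\ord_t a_q$. Transporting via $(g_0)_{i,j} = (-1)^{j-1}s_{i+j-2n}$ yields the claimed bound.

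For the determinant, I would factor the pairing as $g_0(u,v) = T\bigl(u,\,\lambda\sigma(v)\bigr)$, where $T(a,b) := \operatorname{Tr}(ab/f'(\lambda))$ is the standard non-degenerate trace form on $\mathcal E_0$. At the matrix level $g_0 = T\cdot L_{\lambda\sigma}$, and since $L_{\lambda\sigma} = L_{\lambda}\cdot L_{\sigma}$, we have
\[
    \det g_0 \;=\; \det T \cdot \det L_{\lambda} \cdot \det L_{\sigma}.
\]
A direct inspection shows that $T$ is upper anti-triangular with $1$'s on its main anti-diagonal, so $\det T = (-1)^n$. The operator $L_\lambda$ of multiplication by $\lambda$ has matrix the companion $R(f)$, giving $\det L_\lambda = a_{2n}$. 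Finally, $L_\sigma$ is diagonal with entries $(-1)^{i-1}$ for $i=1,\dots,2n$, so $\det L_\sigma = (-1)^n$. Multiplying, $\det g_0 = (-1)^n \cdot a_{2n} \cdot (-1)^n = a_{2n}$.

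The determinant computation is essentially bookkeeping once the factorization through the trace form is in place; the main technical step is the order inequality for the $s_q$, where the single-slope Newton polygon assumption is used in exactly the same way as in the type~C analogue.
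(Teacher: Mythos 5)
Your overall strategy matches the paper's: reduce the entries $(g_0)_{i,j}$ to the shifted power sums $s_q=\operatorname{Tr}(\lambda^{2n-1+q}/f')$, derive the recurrence from $\lambda^{2n}=-\sum_l a_{2l}\lambda^{2n-2l}$, and estimate orders inductively; the paper's own proof just points to Lemma~\ref{pairing g in skew case}, whose argument is exactly this recurrence. Your determinant computation, via the factorization $g_0=T\cdot L_\lambda\cdot L_\sigma$ through the trace form, is a mild rearrangement of the paper's observation that $g_0\operatorname{R}(f)^{-1}$ is anti-triangular with $\pm1$ on the anti-diagonal (indeed $g_0\operatorname{R}(f)^{-1}=-T L_\sigma$, since $L_\sigma L_\lambda=-L_\lambda L_\sigma$); both give $\det g_0=a_{2n}$ correctly.

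There is, however, a genuine gap in the inductive step of the order estimate, and it is the same gap that appears in the paper's proof of Lemma~\ref{pairing g in skew case}. From the recurrence you deduce $\ord_t s_q\geq \min_l\bigl(\ord_t a_{2l}+\ord_t a_{q-2l}\bigr)$ and then assert that the single-slope hypothesis forces the right side to be at least $\ord_t a_q$. But the Newton polygon gives only a \emph{lower} bound $\ord_t a_{2\alpha}\geq\lceil \alpha m/n\rceil$; it says nothing about how large $\ord_t a_q$ actually is, and for $0<q<2n$ the coefficient $a_q$ may sit strictly above the polygon. Concretely, take $f=\lambda^6+t\lambda^4+t^{100}\lambda^2+t^3$, which is self-dual of degree $2n=6$, has $m=3$, and has a single-slope Newton polygon. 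Then $s_4=a_2^2-a_4=t^2-t^{100}$, so $\ord_t s_4=2$, whereas $\ord_t a_4=100$; thus $\ord_t(g_0)_{i,j}\geq \ord_t a_{i+j-2n}$ fails for $i+j=10$. What your induction \emph{does} establish, and what all downstream uses of the lemma (in Lemma~\ref{lem:modified pairing C}, Lemma~\ref{integral and optimal}, and Lemma~\ref{Q div}) actually require, is the weaker and correct estimate
\[
\ord_t(g_0)_{i,j}\ \geq\ \left\lceil\frac{(i+j-2n)\,m}{2n}\right\rceil,
\]
obtained from the same recurrence by bounding $\ord_t a_{2l}+\ord_t a_{q-2l}\geq\lceil lm/n\rceil+\lceil (q/2-l)m/n\rceil\geq\lceil qm/(2n)\rceil$. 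You should state and prove the lemma in this corrected form; as written, both your proposal and the paper assert an inequality that is not true in general.
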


\begin{proof}
    To compute the determinant of $g_0$, it suffice to notice that $g_0\operatorname{R}(f)^{-1}$ satisfies the following condition: the $(i,j)$ entry is $0$ if $i+j\leq 2n$ and the $(i, 2n-i)$ entry is $(-1)^{i+1}$. And the computation of orders of $(g_0)_{i,j}$ is similar to Lemma \ref{pairing g in skew case}.
\end{proof}    

\begin{definition}
    A symmetric matrix $g\in M_{n\times n}(\sO)$ is said to be \emph{optimal} with respect to $I\subset \{1, \cdots , n\}$ if for any $i\in I$, the entries in $i$-th row of $g$ lies in $t\sO$ and if we multiply $t^{-1}$ to each $i$-th row for all $i\in I$, the resulting matrix is invertible over $\sO$.

    Consequently, if $g$ is optimal with respect to $I$, using the standard basis $\{e_1, \cdots , e_n\}$ of $\sO^{\oplus n}$, $g$ induces an exact sequence $$0\longrightarrow \sO^{\oplus n}\stackrel{g}{\longrightarrow}(\sO^{\oplus n})^\vee\longrightarrow Q \longrightarrow 0 $$ where $Q$ is a vector space over $\mathbb{C}$, with basis $\{\overline{e_i^\vee}\}_{i\in I}$.
\end{definition}

We write $2n=mk+l$ so that $0\leq l <m$. The associated balanced partition is $[(k+1)^l, k^{m-l}]$. Since we are working in type B or D, even parts in the partition must occur with even multiplicity. Accordingly, we distinguish the three cases:

\begin{itemize}
    \item  $m\leq 2n$ and $l=0$;
    \item  $m\leq 2n$ and $l>0$;
    \item  $m> 2n$.
\end{itemize}

\subsubsection{The cases that $m\leq 2n$ and $l=0$}  Define 
\[
    P=\mathrm{diag}\left( \{t^{m-1}\}^{k}, \cdots \{t^{i}\}^{k}, \cdots , \{1\}^{k} \right) 
\]  
 Then we have a submodule $\mathcal{E}=P^{-1}\mathcal{E}_0$ on which we have a linear transform $\gamma$ and a symmetric pairing $g$ where $$\gamma=P^{-1}\gamma_0P\ \ \text{and}\ \ g=P^{\vee}g_0P/t^{m-1}.$$

Since $\ord_t a_{2i}\ge \left\lceil 2i\cdot \frac{m}{2n} \right\rceil$, $\gamma$ is integral, i.e., $\gamma\in L^{+}\g$. By Lemma~\ref{order of g_0}, $g$ is integral and optimal with respect to $\{ik\}_{1\leq i \leq m}$, hence it defines an exact sequence $$0\longrightarrow \mathcal{E}\stackrel{g}{\longrightarrow}\mathcal{E}^\vee\longrightarrow Q \longrightarrow 0$$ here $Q$ is a vector space of dimension $m$ over $\mathbb{C}$. 

We define the following polynomial
\[
    \operatorname{gr}_P(f)=X^m+\sum_{i=1}^m(a_{ik})_iX^{m-i}\in \mathbb{C}[X],
\]
where $(a_{ik})_i$ denotes the coefficient of $t^i$ in $a_{ik}$. Notice that if $k\equiv 1$, then $\operatorname{gr}_P(f)$ is self dual.

The pairing $g$ induces a symmetric pairing $g_Q$ on $Q$, which is non-degenerate by Lemma~\ref{order of g_0}.  For a subspace $W$ of $Q$, we consider its preimage $\mathcal{E}_W$ in $\mathcal{E}^\vee$ and the induced map $\gamma_W:\sE_W\rightarrow \sE_W$. Our goal is to compute the reduction of $\gamma_W$ and ensure that there is an induced non-degenerate pairing on $\sE_W$.

The computation of reduction of $\gamma_W$ follows closely the discussion from Lemma~2.38 to Proposition~2.43 in~\cite{WWW24}. Firstly, the basis $\{1,\lambda,\ldots,\lambda^{2n-1}\}$ of $\sE_0$ induces a base $\{e_1, \cdots , e_{2n}\}$ of $\sE$ and hence the dual basis $\{e_1^\vee, \cdots ,e_{2n}^\vee\}$ of $\sE^\vee$. With respect to this basis, the matrix $\gamma^\vee$ acts on $\sE^\vee$ and we may regard $Q=\left\langle e_{ik}^\vee(0)\right\rangle_{1\leq i \leq m}$ as a subspace of $\sE^\vee(0)=\sE^\vee\otimes \mathbb{C}$. We denote $\sE_W=P_W\sE^\vee$ for some $P_W\in \operatorname{GL_{2n}}(\sK)$. Thus under the basis $P_W\{e_1^\vee, \cdots ,e_{2n}^\vee\}$, $\gamma_W$ is given by $P_W^{-1}\gamma^\vee P_W$. Let $\sT=\operatorname{diag}\left(\{\{1\}^{k-1}, t\}^{m}\right)$. Then $P_W^{-1}\sT$ is integral and $\tilde{\gamma^\vee}:=\sT^{-1}\gamma^\vee$ is also integral. Moreover, $\tilde{\gamma^\vee}$ is invertible.

From now on, we do not distinguish an integral matrix $A\in \operatorname{Mat}(\sO)$ from its reduction $A \operatorname{mod} t$. In particular, when we refer to the \emph{rank}, \emph{kernel}, or \emph{image} of $A$, we always mean those of $A \bmod t$.

Thus $\operatorname{rk}P_W=2n-\dim W$, $\Ker P_W^{-1}\sT=W$ and the partition of $\gamma^\vee$ is $[k^m]$. The calculation in the proof of Lemma 2.40 in \cite{WWW24} gives the following: 
\begin{lemma}\label{lem:parition for submodule}
   for $i \geq 1$, we have 
\begin{align*}
    \operatorname{rk}(\gamma_W)^i = \operatorname{rk}(\gamma^\vee)^i +\dim W-\dim\left(W\cap \Ker (\gamma^\vee)^{i-1}\right) -\dim \left(W\cap \left(\tilde{\gamma^\vee}(\Im(\gamma^\vee)^{i})\oplus \tilde{\gamma^\vee}((\gamma^\vee)^{i-1}W) \right) \right)
\end{align*} 
\end{lemma}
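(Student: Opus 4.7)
The plan is to follow the strategy of the calculation in \cite[Lemma~2.40]{WWW24}, adapted to the present single-slope setting. The basic setup is that in the basis $P_W\{e_1^\vee,\ldots,e_{2n}^\vee\}$ the operator is $\gamma_W = P_W^{-1}\gamma^\vee P_W$, so $(\gamma_W)^i = P_W^{-1}(\gamma^\vee)^i P_W$. The two key integral matrices are $M := P_W^{-1}\sT$, which is integral with $\Ker(M \bmod t) = W$ and $\operatorname{rk}(M\bmod t)=2n-\dim W$, and $\tilde{\gamma^\vee} = \sT^{-1}\gamma^\vee$, which is integral and invertible with rectangular Jordan type $[k^m]$. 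All computations are to be made modulo $t$; in what follows I identify integral matrices with their mod-$t$ reductions.

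First I would apply rank-nullity and reduce the statement to computing $\dim\Ker((\gamma_W)^i\bmod t)$. A vector $v\in\sE_W/t\sE_W$ lies in this kernel precisely when $(\gamma^\vee)^i (P_W v) \in t\sE_W$, which using the short exact sequence $0\to \sE\to \sE_W \to W\to 0$ of mod-$t$ vector spaces is equivalent to a pair of conditions: the $\sE$-component of $P_Wv$ must be killed by $(\gamma^\vee)^i$, while the $W$-component, once pushed by $\tilde{\gamma^\vee}$, must be killed by $(\gamma^\vee)^{i-1}$. This is where the term $\dim W - \dim(W\cap \Ker(\gamma^\vee)^{i-1})$ appears: the $\dim W$ counts the extra source directions brought in by $\sE_W\supsetneq\sE$, while the intersection with $\Ker(\gamma^\vee)^{i-1}$ counts those that were already in the kernel for free.

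Second I would analyze the image. The image of $(\gamma_W)^i$ in $\sE_W\bmod t$ is, after unwinding the conjugation by $P_W$ and the identification of $\sT$ as the shift bringing $\sE^\vee$ back into $\sE_W$, the sum $\tilde{\gamma^\vee}\bigl(\Im (\gamma^\vee)^i\bigr) + \tilde{\gamma^\vee}\bigl((\gamma^\vee)^{i-1}W\bigr)$: the first summand is the "old" image coming from $\sE$, and the second is the "new" image coming from the extra $W$-directions. The subtraction $-\dim\bigl(W\cap(\tilde{\gamma^\vee}(\Im(\gamma^\vee)^i)\oplus \tilde{\gamma^\vee}((\gamma^\vee)^{i-1}W))\bigr)$ precisely measures the dimension of $W$ that reappears inside this image, which must be removed to get the genuine rank in $\sE_W$.

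The main obstacle will be verifying that the sum $\tilde{\gamma^\vee}(\Im(\gamma^\vee)^i) + \tilde{\gamma^\vee}((\gamma^\vee)^{i-1}W)$ is genuinely direct, so that the dimensions add as written. This transversality is not automatic: it relies on the rectangular shape $[k^m]$ of the partition associated to $\gamma^\vee$ and on the explicit diagonal form of $\sT = \operatorname{diag}(\{\{1\}^{k-1},t\}^m)$, which together ensure that the extra directions contributed by $W$ under $\tilde{\gamma^\vee}$ are orthogonal (in the appropriate coordinate sense) to $\tilde{\gamma^\vee}(\Im(\gamma^\vee)^i)$. Once this is checked by a direct inspection of the $m$ rectangular Jordan blocks and how $\sT$ interacts with each, the formula assembles from the source and target dimension counts by rank-nullity, as a faithful translation of the argument in \cite[Lemma~2.40]{WWW24}.
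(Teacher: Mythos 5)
Your overall plan — exploit the two integral factors $P_W^{-1}\sT$ (whose reduction has kernel $W$) and $\tilde{\gamma^\vee}$ (invertible mod $t$), following \cite[Lemma~2.40]{WWW24} — is the right one, and the paper itself gives no inline proof but simply cites that lemma, so I am judging your argument on its own terms. You also correctly flag the directness of the sum as the nontrivial transversality to be verified. However, the specific mechanism you describe does not hold up. The sequence $0\to\sE\to\sE_W\to W\to 0$ does \emph{not} remain short exact after tensoring with $\mathbb{C}=\sO/t\sO$: since $W$ is $t$-torsion, the induced map $\sE/t\sE\to\sE_W/t\sE_W$ acquires a $\dim W$-dimensional kernel (coming from $\operatorname{Tor}_1(W,\mathbb{C})$) in addition to the cokernel $W$. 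So a class in $\sE_W/t\sE_W$ has no canonical ``$\sE$-component'' and ``$W$-component,'' and the proposed decoupling of $\Ker\bigl((\gamma_W)^i\bmod t\bigr)$ into two independent conditions is unjustified. Moreover, even informally, the condition you write for the $W$-part would produce the term $W\cap\tilde{\gamma^\vee}{}^{-1}\Ker(\gamma^\vee)^{i-1}$ rather than the $W\cap\Ker(\gamma^\vee)^{i-1}$ that actually appears in the formula, so carrying out the sketch as stated gives the wrong answer.

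The clean route is a rank-of-product computation. Write $(\gamma_W)^i = A\cdot B$ with $A = P_W^{-1}\sT$ and $B = \tilde{\gamma^\vee}\,(\gamma^\vee)^{i-1}P_W$, both integral, so that $\rk(\gamma_W)^i = \rk B - \dim(\Im B\cap\Ker A) = \rk B - \dim(\Im B\cap W)$, everything taken mod $t$. The crucial identity you are missing is that, denoting by $\bar\sE$ the image of $\sE$ in $\sE^\vee(0)$, one has $\Im(\gamma^\vee\bmod t)=\bar\sE$: indeed $\gamma^\vee=\sT\tilde{\gamma^\vee}$ with $\tilde{\gamma^\vee}$ invertible, so $\Im\gamma^\vee=\Im\sT=\bar\sE$, and hence $\Im(\gamma^\vee)^i=(\gamma^\vee)^{i-1}\bar\sE$. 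Combined with $\Im(P_W\bmod t)=\bar\sE\oplus W$ and the transversality $(\gamma^\vee)^{i-1}\bar\sE\cap(\gamma^\vee)^{i-1}W=0$ (exactly the directness you flag; it holds because with Jordan type $[k^m]$ and $Q$ sitting at the closed end of each block, $(\gamma^\vee)^{i-1}Q$ lands one step outside $\Im(\gamma^\vee)^i$), one gets $\Im B=\tilde{\gamma^\vee}\bigl(\Im(\gamma^\vee)^i\oplus(\gamma^\vee)^{i-1}W\bigr)$ and $\rk B=\rk(\gamma^\vee)^i+\dim W-\dim\bigl(W\cap\Ker(\gamma^\vee)^{i-1}\bigr)$, from which the lemma follows. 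In short, the four terms of the formula arise from decomposing $\Im P_W=\bar\sE\oplus W$ and pushing through $(\gamma^\vee)^{i-1}$, not from a kernel count with two decoupled conditions.
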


From the explicit form of $\gamma^\vee$, we see that 
\begin{lemma}\label{lem:intersection with W}
    \begin{enumerate}
        \item for $1\leq i \leq k$, $W\cap\Ker(\gamma^\vee)^{i-1}=0$.
        \item  for $1\leq i \leq k-1$, $W\subset Q \subset \tilde{\gamma^\vee}(\Im(\gamma^\vee)^{i})$.
        \item for $i=k$, $\Im (\gamma^\vee)^k=0$ and $\tilde{\gamma^\vee}((\gamma^\vee)^{k-1}W)=\operatorname{R}(\operatorname{gr}_P(f))^\vee W$. 
    \end{enumerate}
\end{lemma}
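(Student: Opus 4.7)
The plan is to exploit the explicit Jordan structure of $\gamma^\vee(0)$ on $\sE^\vee(0)$. Since $\gamma^\vee$ has partition $[k^m]$, its Jordan chains in the basis $\{e_\ell^\vee\}$ are
\[
e_{jk}^\vee \longrightarrow e_{jk-1}^\vee \longrightarrow \cdots \longrightarrow e_{(j-1)k+1}^\vee \longrightarrow 0 \qquad (j=1,\dots,m),
\]
so $Q=\langle e_{jk}^\vee\rangle_{j=1}^m$ is the span of the chain tops, $\Ker(\gamma^\vee)^{i-1}$ is the span of the bottom $i-1$ vectors in each chain, and $\Im(\gamma^\vee)^{i}$ is the span of the bottom $k-i$ vectors in each chain. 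In particular $\Im(\gamma^\vee)^{k-1}=\Ker\gamma^\vee$, both of dimension $m$.

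For (1), since $W\subseteq Q$ it suffices to verify $Q\cap\Ker(\gamma^\vee)^{i-1}=0$ for $i\leq k$; this is immediate because the basis indices of $Q$ are multiples of $k$, while those of $\Ker(\gamma^\vee)^{i-1}$ are of the form $(j-1)k+s$ with $1\leq s\leq i-1\leq k-1$, which are never multiples of $k$. For (2), since $\Im(\gamma^\vee)^i\supseteq\Im(\gamma^\vee)^{k-1}=\Ker\gamma^\vee$, it suffices to prove $\tilde{\gamma^\vee}(\Ker\gamma^\vee)=Q$. Writing $\gamma^\vee=\sT\,\tilde{\gamma^\vee}$ and noting that $\sT(0)$ is the diagonal projector whose kernel is exactly $Q$ (its diagonal entries vanish precisely at the positions $jk$), for any $v\in\Ker\gamma^\vee$ we obtain
\[
\sT(0)\,\tilde{\gamma^\vee}(0)\,v \;=\; \gamma^\vee(0)\,v \;=\; 0,
\]
so $\tilde{\gamma^\vee}(0)\,v\in Q$. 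Since $\tilde{\gamma^\vee}$ is invertible and $\dim\Ker\gamma^\vee=m=\dim Q$, the inclusion upgrades to an equality.

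For (3), the vanishing $\Im(\gamma^\vee)^k=0$ is immediate since all Jordan blocks have size $k$. To identify the composition $\tilde{\gamma^\vee}(0)\circ(\gamma^\vee(0))^{k-1}|_Q$ with $\operatorname{R}(\operatorname{gr}_P(f))^\vee$, I plan to compute it directly: $(\gamma^\vee(0))^{k-1}$ sends $e_{jk}^\vee\mapsto e_{(j-1)k+1}^\vee$, so one must evaluate $\tilde{\gamma^\vee}(0)\,e_{(j-1)k+1}^\vee$ from the explicit entries of $\gamma$. The only nonzero entries in row $(j-1)k+1$ of $\gamma$ are the subdiagonal entry $(\gamma)_{(j-1)k+1,(j-1)k}=t$ (for $j\geq 2$) and, when $(j-1)k+1$ is odd, the last-column entry $(\gamma)_{(j-1)k+1,2n}=-t^{j-m}a_{(m-j+1)k}$. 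After left multiplication by $\sT^{-1}$ (which divides the components at positions $\ell k$ by $t$) and reduction mod $t$, this produces
\[
\tilde{\gamma^\vee}(0)\,e_{(j-1)k+1}^\vee \;=\; e_{(j-1)k}^\vee \;-\; (a_{(m-j+1)k})_{m-j+1}\,e_{mk}^\vee \qquad (e_0^\vee:=0).
\]
The parity conditions are consistent: $(j-1)k$ and $(m-j+1)k$ share the same parity (their sum is $mk=2n$), so the last-column entry vanishes exactly when $(m-j+1)k$ is odd, which is precisely when $a_{(m-j+1)k}=0$ by the self-duality of $f$. Writing the resulting operator on $Q$ in the ordered basis $(e_k^\vee,e_{2k}^\vee,\dots,e_{mk}^\vee)$ yields the transpose of the companion matrix of
\[
X^m+\sum_{i=1}^m(a_{ik})_i\,X^{m-i} \;=\; \operatorname{gr}_P(f)(X),
\]
completing the identification. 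The main technical point lies in this parity bookkeeping for the last-column entries; once that is handled, the identification is mechanical.
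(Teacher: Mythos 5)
Your proof is correct and follows the same strategy as the paper's: exploit the explicit Jordan block structure of $\gamma^\vee(0)$ coming from the companion matrix and the diagonal conjugation, identify $\Ker\sT(0)$ with $Q$, and compute the composite $\tilde{\gamma^\vee}(0)\circ(\gamma^\vee(0))^{k-1}$ entry by entry. In fact your treatment of part~(3) is more careful than the paper's one-line justification (which only mentions matching characteristic polynomials): you actually exhibit the composite operator on $Q$ as $\operatorname{R}(\operatorname{gr}_P(f))^\vee$ by direct computation, which is precisely the equality of operators the lemma asserts.
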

\begin{proof}
    The first is due to that $Q\cap\Ker(\gamma^\vee)^{i-1}=0$, for $1\leq i \leq k$.

    The second is due to that for $1\leq i \leq k-1$, we have 
\[
    \{e_{jk+1}^\vee(0)\}_{0\leq j \leq m-1}\subset \Im (\gamma^\vee)^{i}
\]

    The last one is due to that the characteristic polynomial of $\tilde{\gamma^\vee}((\gamma^\vee)^{k-1}W)$ mod $t$ is $\gr_{P}(f)$, i.e., the terms on the boundary of the Newton polygon.
\end{proof}

Therefore, in order for the partition of $\gamma_W$ to be $[k^m]$, it suffices to find a maximal isotropic subspace $W$ of $Q$ such that $W\cap \operatorname{R}(\operatorname{gr}_P(f))^\vee W=W$. And in order for the partition of $\gamma_W$ to be $[k+1,k^{m-2},k-1]$, it suffices to find a maximal isotropic subspace $W$ of $Q$ such that $\dim W -\dim W\cap \operatorname{R}(\operatorname{gr}_P(f)^\vee W=1$.

There is an induced perfect pairing on $\sE_W$ if and only if $W$ is isotropic under the pairing $g_Q$ on $Q$. Moreover, if $W$ is maximal isotropic, then the induced pairing is non-degenerate. Now we come to describe the pairing $g_Q$ on $Q$. It is more convenient to describe its inverse pairing $g_Q^{-1}$ on $Q^\vee$. Actually, under the basis chosen for $Q$ (hence the dual basis for $Q^\vee$), 
\[
    (g_Q^{-1})_{i,j}=(g_{ik,jk})_1=\left(\operatorname{Tr}\left(\frac{\lambda \lambda^{ik-1}\sigma(\lambda^{jk-1})}{f^\prime(\lambda)}\right) \right)_{i+j-m}.
\]

By the inequality $\ord_{t}a_{2i}\geq \left\lceil 2i\frac{m}{2n} \right\rceil$ and the calculation of trace, we see that $(g_Q^{-1})_{i,j}$ is exactly the coefficient of $X^{m-1}$ of the expression of $(-1)^{j}X^{i+j}$ in the ring $\mathbb{C}[X]/(\operatorname{gr}_P(f))$. Consequently, we obtain the relations that if $k\equiv 1$, then  
\[
    g_{Q}^{-1}\operatorname{R}(\operatorname{gr}_P(f))=-\operatorname{R}(\operatorname{gr}_P(f))^\vee g_{Q}^{-1}.
\]
and if $k\equiv 0$, then  
\[
    g_{Q}^{-1}\operatorname{R}(\operatorname{gr}_P(f))=\operatorname{R}(\operatorname{gr}_P(f))^\vee g_{Q}^{-1}.
\]

\begin{proposition}\label{m even and l=0}
With the above notation, we obtain 
\begin{enumerate}
    \item Suppose that $m \equiv 0$ and $k \equiv 1$.  
    Then there exists a choice of $W$ such that the reduction of $\gamma_W$ has partition $[k^m]$, and the induced pairing on $\mathcal{E}_W$ is non-degenerate.

    \item Suppose that $m \equiv 0$ and $k \equiv 0$.  
    Then there exists a choice of $W$ such that the reduction of $\gamma_W$ has partition $[k^m]$ and the induced pairing on $\mathcal{E}_W$ is non-degenerate if and only if $\operatorname{gr}_P(f)$ is a square in $\mathbb{C}[X]$.
    
    If $\operatorname{gr}_P(f)$ is not a square, then there exists a choice of $W$ such that the reduction of $\gamma_W$ has partition $[\,k+1,\, k^{m-2},\, k-1\,]$, and the induced pairing on $\mathcal{E}_W$ is non-degenerate.
\end{enumerate}
\end{proposition}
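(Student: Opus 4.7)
The plan is to reduce the construction of $W$ to a linear-algebraic problem on the $m$-dimensional quotient $(Q, g_Q)$ equipped with the operator $\Phi := \operatorname{R}(\operatorname{gr}_P(f))^\vee$, and then to apply the structural lemmas from Subsection~\ref{linear algebra}. By Lemmas~\ref{lem:parition for submodule} and \ref{lem:intersection with W}, for any maximal isotropic $W \subset Q$ the partition of $\gamma_W$ is completely determined by the codimension $c := \dim W - \dim(W \cap \Phi(W))$: the value $c = 0$ produces $[k^m]$, while $c = 1$ produces $[k+1, k^{m-2}, k-1]$. Moreover, the induced pairing on $\mathcal{E}_W$ is non-degenerate precisely when $W$ is maximal isotropic in $(Q, g_Q)$. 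Both parts of the proposition therefore reduce to exhibiting a maximal isotropic $W$ with the prescribed interaction with $\Phi$.

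The identity $g_Q^{-1} \operatorname{R}(\operatorname{gr}_P(f)) = \pm \operatorname{R}(\operatorname{gr}_P(f))^\vee g_Q^{-1}$ derived above translates into the statement that $\Phi$ is anti-self-adjoint with respect to $g_Q$ when $k \equiv 1$, and self-adjoint when $k \equiv 0$. I would record this (anti-)self-adjointness explicitly at the outset, since it is the hinge that selects which linear-algebra lemma applies.

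For case (1), $k$ is odd and $m$ is even, so $\operatorname{gr}_P(f)$ is a self-dual polynomial of even degree and hence lies in $\mathbb{C}[X^2]$. Grouping its roots into pairs $\pm \alpha_i$ and absorbing a scalar $c \in \{1, i\}$ into a suitable monic $\beta$, one exhibits a factorization $\operatorname{gr}_P(f)(X) = \beta(X)\beta(-X)$; Lemma~\ref{self dual polynomial} then yields a maximal isotropic $W$ with $\Phi(W) = W$, so $c = 0$ and the partition is $[k^m]$.

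For case (2), Lemma~\ref{square of polynomial} governs the self-adjoint situation: a $\Phi$-stable maximal isotropic $W$ exists if and only if $\operatorname{gr}_P(f)$ is a square in $\mathbb{C}[X]$, in which case one obtains the partition $[k^m]$ and the forward implication of the ``if and only if'' is automatic. When $\operatorname{gr}_P(f)$ is not a square, Proposition~\ref{prop:codim 1} instead produces a maximal isotropic $W$ with $\operatorname{codim}_W(W \cap \Phi(W)) = 1$, yielding the partition $[k+1, k^{m-2}, k-1]$. The main---though modest---obstacle I anticipate is the bookkeeping that confirms the matrix identity between $g_Q^{-1}$ and $\operatorname{R}(\operatorname{gr}_P(f))$ genuinely implements the (anti-)self-adjointness of $\Phi$ with respect to $g_Q$ in the chosen basis; once that is in hand, the proof is a direct application of Lemmas~\ref{self dual polynomial}, \ref{square of polynomial}, and Proposition~\ref{prop:codim 1}.
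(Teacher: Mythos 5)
Your proposal is correct and follows essentially the same route as the paper: reduce to finding a maximal isotropic subspace of the quotient $(Q,g_Q)$ stable (or almost stable) under $\Phi=\operatorname{R}(\operatorname{gr}_P(f))^\vee$, note that $\Phi$ is anti-self-adjoint when $k\equiv 1$ and self-adjoint when $k\equiv 0$, and invoke Lemmas~\ref{self dual polynomial}, \ref{square of polynomial}, and Proposition~\ref{prop:codim 1}. The only cosmetic differences are that the paper dualizes and works with $\operatorname{R}(\operatorname{gr}_P(f))$ on $(Q^\vee,g_Q^{-1})$ rather than on $(Q,g_Q)$ directly, and that you make explicit the factorization $\operatorname{gr}_P(f)(X)=\beta(X)\beta(-X)$ in case (1) (via the observation $\operatorname{gr}_P(f)\in\mathbb{C}[X^2]$) where the paper leaves this step implicit in the ``if and only if'' of Lemma~\ref{self dual polynomial}.
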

\begin{proof}
    By discussion above, if $m\equiv 0$ and $k\equiv 1$, then $\operatorname{R}(\operatorname{gr}_P(f))$ is anti-self-dual with respect to $g_{Q}^{-1}$, so by Lemma \ref{self dual polynomial}, we have a maximal isotropic subspace ${}^*W$ of $Q^\vee$ so that $\operatorname{R}(\operatorname{gr}_P(f))({}^* W)={}^*W$. Let $W\subset Q$ be the orthogonal complement of ${}^*W$ with respect to the natural pairing between $Q$ and $Q^\vee$. Then $\operatorname{R}(\operatorname{gr}_P(f))^\vee(W)=W$, and therefore the reduction of $\gamma_W$ is $[k^m]$.

    If $m\equiv 0$ and $k\equiv 0$, then $\operatorname{R}(\operatorname{gr}_P(f))$ is self-dual with respect to $g_{Q}^{-1}$, so by Lemma \ref{square of polynomial}, we have a maximal isotropic subspace ${}^*W$ of $Q^\vee$ so that $\operatorname{R}(\operatorname{gr}_P(f))({}^* W)={}^*W$ if and only if $\operatorname{gr}_P(f)$ is a square. And if $\operatorname{gr}_P(f)$ is not a square, by Proposition \ref{prop:codim 1}, we have a maximal isotropic subspace ${}^*W$ of $Q^\vee$ so that $\codim_{{}^*W}\operatorname{R}(\operatorname{gr}_P(f))({}^* W)\cap {}^*W=1$. As before, let $W\subset Q$ be the orthogonal complement of ${}^*W$. Then the reduction of $\gamma_W$ is $[k^m]$ or $[k+1, k^{m-2}, k-1]$ depending on that $\operatorname{gr}_P(f)$ is a square or not.
\end{proof}

\subsubsection{The cases that $m\leq 2n$ and $l\neq 0$}  We take $\gamma_0$ and $g_0$ as before, and choose $P$ to be a diagonal matrix whose $j$-th diagonal entry is $t^{p_j}$, where
\[
    p_{j}=
    \begin{cases}
        m-\left\lceil\tfrac{jm}{2n}\right\rceil, & 1\le j\le n,\\
        \left\lceil\tfrac{(2n+1-j)m}{2n}\right\rceil-1, & n+1\le j\le 2n.
    \end{cases}
\]

\begin{lemma}\label{integral and optimal}
    $\gamma=P^{-1}\gamma_0P$ and $g=P^\vee g_0 P/t^{m-1}$ are integral. Moreover, the partition of $\gamma(0)$ is $[(k+1)^{l}, k^{m-l}]$ and $g$ is optimal with respect to 
    \[
        I_P=\left\{ik+\left\lfloor\frac{il}{m}\right\rfloor, jk+\left\lceil\frac{jl}{m}\right\rceil\right\}_{1\le i\le \left\lfloor\frac{m}{2}\right\rfloor, \left\lfloor\frac{m}{2}\right\rfloor+1\le j\le m}.
    \]
    We write $I_P=\{\alpha_1< \alpha_2< \cdots < \alpha_{m}\}$. In particular, $p_{\alpha_i}=m-i$.
\end{lemma}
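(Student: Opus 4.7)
The plan mirrors the type~C construction of Lemmas~\ref{lem:integrality of modified gamma C}, \ref{lem:modified pairing C} and~\ref{lem:partition of modified gamma C}, adapted to the present situation in which $f$ has only even powers. For integrality of $\gamma = P^{-1}\gamma_0 P$, the only potentially non-integral entries are those in the last column at rows $i$ with $2n+1-i$ even, namely $t^{-p_i}(-a_{2n+1-i})$; integrality reduces to $\ord_t a_{2n+1-i}\ge p_i$, which by the single-slope hypothesis follows from $\lceil (2n+1-i)m/(2n)\rceil + \lceil im/(2n)\rceil \ge m+1$, an immediate consequence of $(2n+1)m/(2n) > m$. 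For $g = P^\vee g_0 P/t^{m-1}$, the entry $(g)_{ij} = t^{p_i + p_j - (m-1)}(g_0)_{ij}$ vanishes unless $i+j$ is even and $\ge 2n+2$; in that case, the bound $\ord_t(g_0)_{ij}\ge \lceil (i+j-2n)m/(2n)\rceil$ from Lemma~\ref{order of g_0}, combined with $\lceil x\rceil + \lceil y\rceil \le \lceil x+y\rceil + 1$, yields integrality exactly as in Lemma~\ref{lem:modified pairing C}. The partition $[(k+1)^l, k^{m-l}]$ is then read off from consecutive equalities among the values $p_1, \ldots, p_{2n}$, just as in Lemma~\ref{lem:partition of modified gamma C}.

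Next, the identification $p_{\alpha_i} = m - i$ is a direct inspection of the piecewise formula for $p_j$. For $j\le n$, $p_j = m - \lceil jm/(2n)\rceil$, so the largest $j\le n$ with $p_j = m - i$ is $j = \lfloor 2ni/m\rfloor = ik + \lfloor il/m\rfloor$, matching $\alpha_i$ for $1\le i\le \lfloor m/2\rfloor$. For $j\ge n+1$, $p_j = \lceil (2n+1-j)m/(2n)\rceil - 1$, and a parallel computation gives the largest such $j$ as $ik + \lceil il/m\rceil$, matching $\alpha_i$ for $\lfloor m/2\rfloor + 1\le i\le m$.

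The new ingredient is the optimality of $g$ with respect to $I_P$. For $\alpha_k\in I_P$ we have $(g)_{\alpha_k, j} = t^{p_j - k + 1}(g_0)_{\alpha_k, j}$, which vanishes unless $\alpha_k + j$ is even and $\ge 2n+2$; in that case the required bound $\ord_t(g)_{\alpha_k, j}\ge 1$ reduces to
\[
    p_j + \lceil (\alpha_k + j - 2n)m/(2n)\rceil \ge k,
\]
which I would verify case-by-case according to whether $j\le n$ or $j\ge n+1$, using the explicit formulas for $p_j$ and $\alpha_k$ together with the essential fact that $\alpha_k$ is maximal among indices with $p$-value $m - k$. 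Once row-integrality is in hand, invertibility of the rescaled matrix $\tilde g$ (with the $m$ rows in $I_P$ divided by $t$) is equivalent to $\ord_t\det g = m$. Since $\det g_0 = a_{2n}$ by Lemma~\ref{order of g_0}, $\det P = t^{\sum p_j}$, and the symmetry $p_j + p_{2n+1-j} = m-1$ gives $\sum_{j=1}^{2n} p_j = n(m-1)$, one obtains $\det g = a_{2n}$, which has order exactly $m$, completing the optimality claim.

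The main obstacle is the row-integrality inequality above. Applying ceiling subadditivity naively yields only $\ge k - 1$, and the missing unit of sharpness must come from the maximality of $\alpha_k$ in its $p$-value fibre. A careful case analysis---particularly around the transition $i = \lfloor m/2\rfloor$ between the two halves of $I_P$, and around the index $j$ where $p_j$ jumps---is needed to secure the inequality uniformly in $j$; I expect this bookkeeping, rather than any conceptual difficulty, to be the only delicate point of the argument.
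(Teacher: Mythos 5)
Your proposal follows essentially the same route as the paper: integrality of $\gamma$ and $g$ from the single-slope estimate and Lemma~\ref{order of g_0}, the determinant computation $\det g=\pm a_{2n}$ via $\sum p_j=n(m-1)$, identification of $I_P$ through the jump indices of Lemma~\ref{lem:jumps}, and reduction of optimality to a row-integrality estimate plus $\ord_t\det g=m$. In fact the paper's own proof is even terser than yours at the point you flag as delicate: after computing the sub-anti-diagonal orders it simply says ``the statement follows from Lemma~\ref{lem:jumps} and that $g$ is symmetric,'' leaving the $j>2n-\alpha_k$ entries implicit. Your diagnosis of where the extra unit comes from is correct: for $\alpha_k\le n$ one uses the jump relation $\lceil(\alpha_k+1)m/(2n)\rceil=\lceil\alpha_k m/(2n)\rceil+1$ and ceiling subadditivity on $(2n+1-j)+(\alpha_k+j-2n)=\alpha_k+1$, while for $n<\alpha_k<2n$ one uses the analogous jump relation at $u=2n+1-\alpha_k$ together with $(2n-\alpha_k)+(2n+1-j)+(\alpha_k+j-2n)=2n+1$; the exceptional index $\alpha_m=2n$ is handled directly from $p_j+\ord_t a_j\ge m$. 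So the bookkeeping closes exactly as you anticipate.
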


\begin{proof}
    Since $\gamma_0$ is adjoint, and the construction of $P$ is the same as that for $\mathfrak{sp}_{2n}$, by Lemma \ref{lem:integrality of modified gamma C}, \ref{lem:partition of modified gamma C}, the statements for $\gamma$ holds in the same way.

    By \eqref{eq:integrality of pairing}, $\ord_t a_{i+j-2n}+p_i+p_j-(m-1)\ge 0$. Hence $g$ is integral and $\ord\det g=\ord\det g_0=m$. Notice that $g_{i,j}=0$ if $i+j< 2n$.  
    
    Now we determine when $\ord g_{i,j}=1$ if $i+j=2n$. We assume that $i\ge j$,
    \[
    \ord g_{i,j}=-\left\lceil\tfrac{jm}{2n}\right\rceil+\left\lceil\tfrac{(1-i)m}{2n}\right\rceil =1
    \]
    In other words, $\left\lceil\tfrac{(j+1)m}{2n}\right\rceil-\left\lceil\tfrac{jm}{2n}\right\rceil=1$, i.e. when $p_j$ jumps. Then the statement follows from Lemma \ref{lem:jumps} and that $g$ is symmetric.
\end{proof}

We construct $\mathcal{E}$ as before and choose the same type of basis. By the choice of $P$, in the present case we have $Q = \left\langle e_{\alpha_i}^\vee(0) \right\rangle_{1\leq i \leq m}$. Under the chosen basis of $Q$, we define a linear transform $\Phi$ on $Q$ using the matrix $\operatorname{R}(\operatorname{gr}_P(f))^\vee$ where the polynomial $\operatorname{gr}_P(f)$ is defined as $$\operatorname{gr}_P(f)=X^{m}+\sum_{i=1}^{m}(a_{\alpha_i})_iX^{m-i}.$$

We further decompose $Q$ as follows: Let $Q^{k+1} \subset Q$ be the subspace generated by 
\[
e_{\alpha_i}^\vee(0)
\quad \text{such that} \quad
\alpha_i - \alpha_{i-1} = k+1
\quad (\text{with } \alpha_0 := 0),
\]
and let $Q^{k} \subset Q$ be the subspace generated by the remaining basis elements.  
Then we have a direct sum decomposition
\[
Q = Q^{k+1} \oplus Q^{k}.
\]
As before, we regard $Q$ as a subspace of $\mathcal{E}(0)$.

Following the same strategy as in the previous case, for a subspace $W$ of $Q$, we consider its inverse image $\sE_W$. Our goal is to compute the partition of the induced endomorphism $\gamma_W$. To make the partition of $\gamma_W$ be $[(k+1)^{l}, k^{m-l}]$, it suffices to choose $W$ such that $\operatorname{rk}(\gamma_W)^i - \operatorname{rk}(\gamma^\vee)^i=0$ for all $i\geq 1$. As Lemma \ref{lem:intersection with W}, we have

\begin{lemma}
   $W\cap\Ker(\gamma^\vee)^{i-1}=0$ for $1\leq i \leq k$; and for $1\leq i \leq k-1$, $W\subset \tilde{\gamma^\vee}(\Im(\gamma^\vee)^{i})$.  
\end{lemma}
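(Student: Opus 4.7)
The plan is to mirror the strategy of Lemma~\ref{lem:intersection with W} by reading off the Jordan structure of $\gamma^\vee\bmod t$ directly from the matrix of $\gamma=P^{-1}\gamma_0 P$, and then performing combinatorial bookkeeping on the indices $\alpha_j\in I_P$. The new ingredients compared with the $l=0$ case are the mixed block sizes $k$ and $k+1$, and the potential contribution of the last column of $\gamma$ coming from the coefficients $a_{2i}$ of $f(\lambda)$. For $k\geq 2$ a direct computation gives $\gamma^\vee(e_k^\vee)=t^{p_{k-1}-p_k}e_{k-1}^\vee$ plus, when $k$ is odd, an extra term in the $e_{2n}^\vee$ direction; for $k=1$ only the latter term appears. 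Using $p_{2n}=0$, $p_1=m-1$, and $p_{\alpha_j}-p_{\alpha_j+1}=1$ at every jump (which follows from Lemma~\ref{lem:jumps} and the explicit formula for $p_j$), $\gamma^\vee(0)$ decomposes into $m$ Jordan chains
\[
e_{\alpha_j}^\vee\longrightarrow e_{\alpha_j-1}^\vee\longrightarrow\cdots\longrightarrow e_{\alpha_{j-1}+1}^\vee\longrightarrow 0,\qquad j=1,\dots,m,
\]
(with $\alpha_0:=0$) of sizes $\alpha_j-\alpha_{j-1}\in\{k,k+1\}$, and the generators of $Q$ are precisely their tops.

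For (1), $\Ker(\gamma^\vee)^{i-1}\bmod t$ equals the span of $\{e_{\alpha_{j-1}+s}^\vee:1\leq s\leq i-1\}_{j=1}^m$. For any $\alpha_{j'}$: if $j'\geq j$ then $\alpha_{j'}\geq\alpha_j\geq\alpha_{j-1}+k>\alpha_{j-1}+s$ since $s\leq i-1\leq k-1$; if $j'<j$ then $\alpha_{j'}\leq\alpha_{j-1}<\alpha_{j-1}+s$. Hence no generator of $Q$ lies in $\Ker(\gamma^\vee)^{i-1}$, so $Q\cap\Ker(\gamma^\vee)^{i-1}=0$ and in particular $W\cap\Ker(\gamma^\vee)^{i-1}=0$.

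For (2), set $\mathcal{T}=\operatorname{diag}(t_1,\dots,t_{2n})$ with $t_j=t$ for $j\in I_P$ and $t_j=1$ otherwise, and put $\tilde{\gamma^\vee}:=\mathcal{T}^{-1}\gamma^\vee$. Integrality follows from $p_{\alpha_j}-p_{\alpha_j+1}=1$, and since $\det\gamma^\vee=\pm a_{2n}$ shares the $t$-valuation $m=\ord_t(\det\mathcal{T})$ with $\det\mathcal{T}$, the reduction $\tilde{\gamma^\vee}\bmod t$ is invertible. Explicitly, $\tilde{\gamma^\vee}(e_k^\vee)\equiv e_{k-1}^\vee+\ast\,e_{2n}^\vee\pmod t$ for $k\geq 2$, and $\tilde{\gamma^\vee}(e_1^\vee)\equiv -(a_{2n})_m\,e_{2n}^\vee\pmod t$ with $(a_{2n})_m\neq 0$. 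Since every block has size at least $k>i$, the vectors $e_1^\vee,e_{\alpha_1+1}^\vee,\dots,e_{\alpha_{m-1}+1}^\vee$ all lie in $\Im(\gamma^\vee)^i$. Then $e_{2n}^\vee=e_{\alpha_m}^\vee$ belongs to $\tilde{\gamma^\vee}(\Im(\gamma^\vee)^i)$ via $e_1^\vee$, and subtracting a suitable multiple of $\tilde{\gamma^\vee}(e_1^\vee)$ from each $\tilde{\gamma^\vee}(e_{\alpha_j+1}^\vee)$ yields $e_{\alpha_j}^\vee$ for $j=1,\dots,m-1$. Hence $Q\subset\tilde{\gamma^\vee}(\Im(\gamma^\vee)^i)$, and a fortiori $W\subset\tilde{\gamma^\vee}(\Im(\gamma^\vee)^i)$.

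The main obstacle is the dual role of the $e_{2n}^\vee$-contributions from the last column: they threaten the clean identity $\tilde{\gamma^\vee}(e_{\alpha_j+1}^\vee)=e_{\alpha_j}^\vee$ for $j<m$, but also supply the only available preimage of $e_{\alpha_m}^\vee=e_{2n}^\vee$. Balancing these two uses — via the cascade argument — is what differs most from the $l=0$ case and relies on the single-slope estimates $\ord_t a_{2i}\geq\left\lceil 2im/(2n)\right\rceil$ to keep all remaining contributions integral.
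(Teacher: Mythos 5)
Your proof is correct and follows essentially the same strategy the paper sketches for Lemma~\ref{lem:intersection with W} in the $l=0$ case: read off the Jordan chain structure of $\gamma^\vee\bmod t$ from the jump set $I_P$, observe that $Q$ is spanned by chain tops so avoids $\Ker(\gamma^\vee)^{i-1}$ for $i\le k$, and recover $Q$ from the chain bottoms via $\tilde{\gamma^\vee}$. The cascade argument through $e_1^\vee\mapsto e_{2n}^\vee$ makes explicit a step the paper leaves implicit.
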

Moreover,
\begin{lemma}\label{lem:boundary terms}
    For $i=k$, we have 
\[
    W\cap \left(\tilde{\gamma^\vee}(\Im(\gamma^\vee)^{k})\oplus \tilde{\gamma^\vee}((\gamma^\vee)^{k-1}W) \right)=W\cap \left(\Phi(Q^{k+1}) \oplus \Phi(W\cap Q^{k})\right)
\]
\end{lemma}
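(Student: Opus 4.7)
The plan is to analyze the action of $\tilde{\gamma^\vee}$ on each summand of the LHS using the Jordan structure of $\gamma^\vee(0)$, whose partition is $[(k+1)^l, k^{m-l}]$. The top vectors of the $l$ Jordan blocks of size $k+1$ span $Q^{k+1}$, while those of the $m-l$ blocks of size $k$ span $Q^k$, so that $Q=Q^{k+1}\oplus Q^k$. Only the $(k+1)$-blocks survive after $k$ iterations of $\gamma^\vee$, hence $\Im(\gamma^\vee)^k=(\gamma^\vee)^k(Q^{k+1})$ is supported at the ``bottom of $(k+1)$-block'' positions; in particular it is transverse to $Q$ inside $\sE^\vee(0)$, a fact I will exploit when taking intersections with $W$.

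First I would establish the identification $\tilde{\gamma^\vee}(\Im(\gamma^\vee)^k)=\Phi(Q^{k+1})$. Since $\tilde{\gamma^\vee}\circ(\gamma^\vee)^k=\sT^{-1}(\gamma^\vee)^{k+1}$, applied to a top vector in $Q^{k+1}$ this realises a full traversal of the corresponding $(k+1)$-block together with the $\sT^{-1}$ correction, which by the construction of $\operatorname{gr}_P(f)$ coincides with the macro shift induced by $\Phi=\operatorname{R}(\operatorname{gr}_P(f))^\vee$. The same mechanism yields $\tilde{\gamma^\vee}(\gamma^\vee)^{k-1}(w^{(k)})=\Phi(w^{(k)})$ for $w^{(k)}\in Q^k$, because $(\gamma^\vee)^{k-1}w^{(k)}$ sits at the bottom of a $k$-block, which is precisely where $\sT$ carries the factor of $t$ needed to make $\tilde{\gamma^\vee}$ integral.

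Next I would treat a general $w\in W$ by writing $w=w^{(k+1)}+w^{(k)}$ with $w^{(k+1)}\in Q^{k+1}$, $w^{(k)}\in Q^k$. At the one-step-before-bottom position of a $(k+1)$-block the corresponding diagonal entry of $\sT$ is trivial, so $\tilde{\gamma^\vee}(\gamma^\vee)^{k-1}(w^{(k+1)})=(\gamma^\vee)^k w^{(k+1)}\in \Im(\gamma^\vee)^k$. Combined with the previous step,
\[
\tilde{\gamma^\vee}((\gamma^\vee)^{k-1}W)=\bigl\{(\gamma^\vee)^k w^{(k+1)}+\Phi(w^{(k)}) : w\in W\bigr\},
\]
so the LHS sum becomes $\Phi(Q^{k+1})+(\gamma^\vee)^k(W^{(k+1)})+\Phi(W^{(k)})$, where $W^{(k+1)}$ and $W^{(k)}$ denote the projections of $W$ onto $Q^{k+1}$ and $Q^k$ respectively.

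Finally I would intersect with $W\subseteq Q$. The summand $(\gamma^\vee)^k(W^{(k+1)})\subseteq \Im(\gamma^\vee)^k$ lives at positions disjoint from $Q$, whereas $\Phi(Q)\subseteq Q$; hence an element of the LHS sum can lie in $W$ only if its $\Im(\gamma^\vee)^k$-component vanishes. As $(\gamma^\vee)^k$ is injective on $Q^{k+1}$, this forces $w^{(k+1)}=0$, i.e.\ $w\in W\cap Q^k$, and $\Phi(W^{(k)})$ may then be replaced by $\Phi(W\cap Q^k)$ in the surviving portion of the intersection. The LHS therefore collapses to $W\cap(\Phi(Q^{k+1})\oplus \Phi(W\cap Q^k))$, as required. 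The main technical obstacle will be the rigorous verification that the $\sT^{-1}$ correction realising both $\tilde{\gamma^\vee}(\gamma^\vee)^k|_{Q^{k+1}}$ and $\tilde{\gamma^\vee}(\gamma^\vee)^{k-1}|_{Q^k}$ is indeed $\operatorname{R}(\operatorname{gr}_P(f))^\vee$: this amounts to matching the leading $t$-coefficients of $\gamma^\vee$ on the ``transition columns'' against the coefficients $(a_{\alpha_i})_i$ defining $\operatorname{gr}_P(f)$, a check that rests on the specific form of $P$ recorded in Lemma~\ref{integral and optimal} together with the Newton polygon inequalities $\ord_t a_{2i}\ge \lceil im/n\rceil$.
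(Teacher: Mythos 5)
Your approach is essentially the same as the paper's: both hinge on recognising that $\tilde{\gamma^\vee}$, applied across the Jordan block boundaries, realises $\Phi=\operatorname{R}(\operatorname{gr}_P(f))^\vee$, followed by an intersection argument. You supply detail that the paper's very terse proof leaves implicit (the paper only records $\tilde{\gamma^\vee}(e_{\alpha_j+1}^\vee(0))=e_{\alpha_j}^\vee(0)+(a_{2n-\alpha_j})_{p_{\alpha_j+1}+1}e_{2n}^\vee(0)$ and asserts the two sub-equalities), which is fine. However, two statements in your write-up are incorrect as stated and should be repaired.

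First, the $t$-entries of $\sT$ sit at the \emph{tops} of the Jordan blocks, i.e.\ at the indices in $I_P$; the paper's own $l=0$ case $\sT=\operatorname{diag}(\{1^{k-1},t\}^m)$ has the $t$'s at $k,2k,\ldots,mk=\alpha_1,\ldots,\alpha_m$. They are not at the bottoms, as you say when justifying your claims~3 and~4. What $\sT^{-1}$ actually cancels is the $t$ in the transition entry $(\gamma^\vee)_{\alpha_{i-1},\,\alpha_{i-1}+1}=t$ and the common $t$-factor of the last row of $\gamma^\vee$ — both in rows indexed by $I_P$.

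Second, the asserted identity $\tilde{\gamma^\vee}(\gamma^\vee)^{k-1}(w^{(k+1)})=(\gamma^\vee)^k w^{(k+1)}\in\Im(\gamma^\vee)^k$ is not quite right: the last row of $\tilde{\gamma^\vee}$ contributes an extra term of the form $-(a_{2n-1-\alpha_{i-1}})_{p_{\alpha_{i-1}+2}+1}\,e_{2n}^\vee(0)$, and the Newton-polygon bound gives only $\ord_t a_{2n-1-\alpha_{i-1}}\ge m-i+1=p_{\alpha_{i-1}+2}+1$, so this coefficient need not vanish. Since $e_{2n}^\vee(0)\in Q$ while $\Im(\gamma^\vee)^k\cap Q=0$, the claimed membership fails whenever the correction survives. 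This is not fatal: in your intersection step, only the $\Im(\gamma^\vee)^k$-component of the expression — which is exactly $(\gamma^\vee)^k w^{(k+1)}$ — must vanish, forcing $w^{(k+1)}=0$; the correction then vanishes as well, and the conclusion follows as you say. In a final write-up one should carry the correction along explicitly and describe $\tilde{\gamma^\vee}((\gamma^\vee)^{k-1}W)$ as the subspace $\bigl\{(\gamma^\vee)^k w^{(k+1)}+c(w^{(k+1)})e_{2n}^\vee(0)+\Phi(w^{(k)}) : w\in W\bigr\}$ parametrised by $w\in W$, rather than the potentially larger sum $(\gamma^\vee)^k(W^{(k+1)})+\Phi(W^{(k)})$ of projections.
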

\begin{proof}
    The action of $\gamma^\vee$ decrease the index of $e_j^\vee(0)$ by $1$ for $j$ such that $j-1\notin I_P$. By the explicit form of $\tilde{\gamma^\vee}$, one see that $\tilde{\gamma^\vee}(e_{\alpha_j+1}^\vee(0))=e_{\alpha_j}^\vee(0)+(a_{2n-\alpha_j})_{p_{\alpha_j+1}+1}e_{2n}^\vee(0)$. Notice that $p_{\alpha_j+1}+1=p_{\alpha_j}$, then by the definition of $\Phi$, we see that
    \[
    Q\cap \tilde{\gamma^\vee}(\Im(\gamma^\vee)^{k})=Q\cap \Phi(Q^{k+1})
    \]
    Similarly, we have 
    \[
    Q\cap \tilde{\gamma^\vee}((\gamma^\vee)^{k-1}W=Q\cap \Phi(W\cap Q^{k})
    \]
    So we have the equality.
\end{proof}
So this means that we need $W$ to satisfy 
\[
    W \subset \Phi(Q^{k+1})+\Phi(W\cap Q^{k}).
\]
Now we consider $i=k+1$, then $W\cap\Ker(\gamma^\vee)^{k}=W\cap Q^{k}$, $\Im (\gamma^\vee)^{k+1}=0$ and
\[
    \tilde{\gamma^\vee}((\gamma^\vee)^{k}W)=\Phi(\overline{(W)_{=k+1}})
\]
where $\overline{(W)_{=k+1}}$ denotes the projection of $W$ into $Q^{k+1}$. Consequently, we further require 
\[
    \dim W= \dim W\cap Q^{k}+ \dim W\cap \Phi(\overline{(W)_{=k+1}}).
\]

As before, to induce the pairing on $\sE_W$, we also need $W$ to be isotropic. So we need to describe the pairing $g_Q$ on $Q$. Since it is more easy to describe the pairing $g_Q^{-1}$ in $Q^\vee$, we first transfer the conditions for $W$ to the conditions for a certain subspace in $Q^\vee=(Q^{k+1})^\vee\oplus (Q^{k})^\vee$.

For a subspace $V$ in $Q$, we denote by $^*V \subset Q^\vee$ its orthogonal complement of $V$ with respect to the natural pairing between $Q$ and $Q^\vee$. Then the following identities hold:
\begin{itemize}
    \item  $^*(Q^{k+1}+W\cap Q^k)=(Q^{k})^\vee\cap {}^*(W\cap Q^k)$;
    \item  $^*(W\cap Q^k)={}^*W+(Q^{k+1})^\vee$;
    \item  $^*(\overline{(W)_{=k+1}})=(Q^{k})^\vee+{}^*W\cap (Q^{k+1})^\vee$;
    \item  For any $V\subset Q$, ${}^*(\Phi(V))=(\Phi^\vee)^{-1}({}^*V)$.
\end{itemize}

Therefore, in order for the reduction of $\gamma_W$ to be $[(k+1)^{l}, k^{m-l}]$ and for the induced pairing on $\mathcal{E}_W$ to be non-degenerate, it suffices to find a subspace ${}^*W$ in $Q^\vee=(Q^{k+1})^\vee\oplus (Q^{k})^\vee$ such that 
\begin{itemize}
    \item [($\dagger1$)] ${}^*W$ is maximal isotropic under the pairing $g_Q^{-1}$;
    \item [($\dagger2$)] $(Q^{k})^\vee\cap ({}^*W+(Q^{k+1})^\vee)\subset \Phi^\vee({}^*W)$;
    \item [($\dagger3$)] $\Phi^\vee({}^*W)\subset (Q^{k})^\vee+{}^*W\cap (Q^{k+1})^\vee$.
\end{itemize}

Recall that $Q^\vee$ admits a basis
\[
    Q^\vee = \left\langle e_{\alpha_i}(0) \right\rangle_{1 \le i \le m },
\]
with respect to which the action of $\Phi^\vee$ is given by the matrix $\operatorname{R}\bigl(\operatorname{gr}_P(f)\bigr)$.

We define the following subspaces of $Q^\vee$:
\[
    Q^\vee_{div}=\left \langle e_{\alpha_i}(0)  \right \rangle_{m \mid il},
\quad \text{and} \quad
    Q^\vee_{>,ndiv}=\left \langle e_{\alpha_i}(0)  \right \rangle_{m \nmid il,\  i>\frac{m}{2}}.
\]

\begin{lemma}\label{Q div}
The following statements hold:
\begin{itemize}
    \item [(a)] $\dim Q^\vee_{div}=\operatorname{gcd}(m, l)$, sharing same parity as $m$. The restriction of $g_Q^{-1}$ on $Q^\vee_{div}$ has rank at least $\operatorname{gcd}(m, l)-1$;
    \item [(b)] The pairing between $Q^\vee_{div}$ and $Q^\vee_{>,ndiv}$ vanishes and $Q^\vee_{>,ndiv}$ is isotropic; 
    \item [(c)] For $e_{\alpha_i}(0)\in Q^\vee_{div}$, if $i\leq \lfloor\frac{m}{2}\rfloor$, then $e_{\alpha_i}(0)\in (Q^{k+1})^\vee$ and $e_{\alpha_{i+1}}(0)\in (Q^{k})^\vee$, if $i> \lfloor\frac{m}{2}\rfloor$, then $e_{\alpha_i}(0)\in (Q^{k})^\vee$, and if $\lfloor\frac{m}{2}\rfloor< i < m$, then $e_{\alpha_{i+1}}(0)\in (Q^{k+1})^\vee$;
    \item [(d)] $(Q^\vee_{div}+Q^\vee_{>,ndiv})\cap (Q^k)^\vee\subset \Phi^\vee(Q^\vee_{>,ndiv})$;
    \item [(e)] $\overline{(\Phi^\vee(Q^\vee_{div}+Q^\vee_{>,ndiv}))_{=k+1}}\subset Q^\vee_{>,ndiv}\cap (Q^{k+1})^\vee$, here we use $\overline{(\Phi^\vee(Q^\vee_{div}+Q^\vee_{>,ndiv}))_{=k+1}}$ to denote the projection of $\Phi^\vee(Q^\vee_{div}+Q^\vee_{>,ndiv})$ to $(Q^{k+1})^\vee$.
\end{itemize}
\end{lemma}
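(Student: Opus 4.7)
The plan is to prove parts (a)--(e) in order: (a) and (c) are arithmetic--combinatorial, (b) is a direct pairing computation, and (d), (e) are structural consequences of the companion matrix action of $\Phi^\vee$ combined with (c).

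For (a), set $d = \gcd(m, l)$ and write $m = m' d$, $l = l' d$ with $\gcd(m', l') = 1$. Then $m \mid il$ is equivalent to $m' \mid i$, yielding exactly $d$ indices in $[1, m]$, so $\dim Q^\vee_{div} = \gcd(m, l)$. The parity claim $\gcd(m, l) \equiv m \pmod{2}$ reduces to checking that whenever $m$ is even, the arithmetic relation $2n = mk + l$ with the B/D parity constraints on the balanced partition $[(k+1)^l, k^{m-l}]$ forces $l$ to be even. For the rank bound, restrict the explicit formula for $(g_Q^{-1})_{i, j}$ (up to sign, the coefficient of $X^{m-1}$ in $X^{i+j} \bmod \operatorname{gr}_P(f)$) to indices $i, j \in m' \mathbb{Z}$ and verify that the resulting Gram matrix has rank at least $d - 1$. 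For (c), combine Lemma \ref{lem:jumps} with the formulas $\alpha_i = ik + \lfloor il/m \rfloor$ (for $i \le \lfloor m/2 \rfloor$) and $\alpha_i = ik + \lceil il/m \rceil$ (for $i > \lfloor m/2 \rfloor$), and use that $e_{\alpha_i}(0) \in (Q^{k+1})^\vee$ precisely when $\alpha_i - \alpha_{i-1} = k + 1$. Under the assumption $m \mid il$, a case analysis of how the floor and ceiling advance at $i - 1 \to i$ and $i \to i + 1$ in each index range delivers the stated memberships. Part (b) then follows from the same formula for $g_Q^{-1}$, combined with the self-duality of $\operatorname{gr}_P(f)$ inherited from that of $f$, which forces the relevant coefficients in $\mathbb{C}[X]/(\operatorname{gr}_P(f))$ to vanish.

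For (d) and (e), use that $\Phi^\vee(e_{\alpha_i}(0)) = e_{\alpha_{i+1}}(0)$ for $i < m$, together with the memberships from (c). An element of $(Q^\vee_{div} + Q^\vee_{>, ndiv}) \cap (Q^k)^\vee$ is a linear combination of basis vectors $e_{\alpha_i}(0)$ that both belong to $Q^\vee_{div} + Q^\vee_{>, ndiv}$ and lie in $(Q^k)^\vee$; by (c), each such vector is the $\Phi^\vee$-image of some $e_{\alpha_{i-1}}(0) \in Q^\vee_{>, ndiv}$, giving (d). Dually, tracking where $\Phi^\vee$ sends the generators of $Q^\vee_{div}$ and $Q^\vee_{>, ndiv}$ relative to the decomposition $(Q^{k+1})^\vee \oplus (Q^k)^\vee$ yields (e): the image of a generator $e_{\alpha_i}(0)$ is $e_{\alpha_{i+1}}(0)$, and (c) controls whether this lies in the $(Q^{k+1})^\vee$ or $(Q^k)^\vee$ summand. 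The main obstacle is the combinatorial bookkeeping, since one must simultaneously track the divisibility condition $m \mid il$, the range condition $i > m/2$, the floor/ceiling jump pattern of the $\alpha_i$ from Lemma \ref{lem:jumps}, and the index shift produced by $\Phi^\vee$; once these are lined up, each individual statement reduces to a short verification, and no conceptually new input beyond (c) and the companion matrix structure is required.
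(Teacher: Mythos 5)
Your proposal gets the overall shape right (arithmetic/combinatorial for (a), (c); pairing estimates for (b); companion-matrix bookkeeping plus (c) for (d), (e)), but there are two substantive gaps and one misremembered formula. First, you invoke ``the explicit formula for $(g_Q^{-1})_{i,j}$ (up to sign, the coefficient of $X^{m-1}$ in $X^{i+j}\bmod \operatorname{gr}_P(f)$)'' both for the rank bound in (a) and for (b). That identity is derived in the paper only in the $l=0$ case, where the jump indices are uniformly spaced ($\alpha_i = ik$). In the $l\neq 0$ case the $\alpha_i$ advance by either $k$ or $k+1$, and no such clean formula is established; the paper instead works directly with $(g_Q^{-1})_{i,j} = (g_{\alpha_i,\alpha_j})_1$ and order estimates. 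Likewise, your claim that (b) ``follows from \ldots the self-duality of $\operatorname{gr}_P(f)$ inherited from that of $f$'' is not correct: the paper only asserts $\operatorname{gr}_P(f)$ is self-dual in the $l=0$, $k\equiv 1$ case, and in the $l\neq 0$ setting (where the $\alpha_i$ have mixed parity) this polynomial need not be self-dual. The actual argument for (b) is a valuation estimate: one bounds $\ord_t a_{\alpha_{j_1}+\alpha_{j_2}-2n} \geq \lceil m(\alpha_{j_1}+\alpha_{j_2}-2n)/(mk+l)\rceil$ and shows this strictly exceeds $j_1+j_2-m$ whenever $m\nmid j_1 l$ or $m\nmid j_2 l$, forcing the relevant entry of $g_Q^{-1}$ to vanish. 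Similarly, the rank bound in (a) comes from the observation that for $i<m$ with $m\mid il$, one has $\alpha_i+\alpha_{m-i}=2n$, so the Gram matrix restricted to $\{e_{\alpha_i}(0): i<m,\ m\mid il\}$ is anti-triangular with $\pm 1$ on the anti-diagonal — a specific structural fact you should name rather than gesture at.

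Second, in (e) you write that ``no conceptually new input beyond (c) and the companion matrix structure is required,'' treating $\Phi^\vee$ as a pure shift $e_{\alpha_i}(0)\mapsto e_{\alpha_{i+1}}(0)$. That works for $i<m$, but $e_{\alpha_m}(0)\in Q^\vee_{div}$ (since $m\mid ml$) and $\Phi^\vee(e_{\alpha_m}(0))$ is the last column of $\operatorname{R}(\operatorname{gr}_P(f))$, which is the linear combination $-\sum_i (a_{\alpha_{m+1-i}})_{m+1-i}\, e_{\alpha_i}(0)$ involving the coefficients of $\operatorname{gr}_P(f)$. To obtain (e) you must additionally show that those coefficients vanish at the indices $i\leq \lfloor m/2\rfloor$ with $e_{\alpha_i}(0)\in (Q^{k+1})^\vee$; the paper does this by estimating $\ord_t a_{\alpha_{m+1-i}} > m+1-i$ for such $i$, using $m\nmid (i-1)l$. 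Without this extra valuation estimate the inclusion in (e) can fail. Your parts (a) (dimension/parity), (c), and (d) match the paper's arguments modulo bookkeeping, but (a)'s rank bound, (b), and (e) all need the valuation estimates you omitted or replaced with an inapplicable formula.
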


\begin{proof}
By the definition of $Q^{\vee}_{div}$, $\dim Q^\vee_{div}=\operatorname{gcd}(m, l)$.  If $m\equiv 0$ then $l\equiv 0$, so $\operatorname{gcd}(m, l)\equiv m$. Notice that when $i\neq m$, if $e_{\alpha_i}(0)\in Q^\vee_{div}$ then $e_{\alpha_{m-i}}(0)\in Q^\vee_{div}$. Since $\alpha_i+\alpha_{m-i}=2n$ when $m\mid il$, so $g_Q^{-1}(e_{\alpha_{j_1}}(0), e_{\alpha_{j_2}}(0))=\pm 1$, hence (a) follows.
    
For (b), we compute $g_Q^{-1}(e_{\alpha_{j_1}}(0), e_{\alpha_{j_2}}(0))$ for $\alpha_{j_1}=j_1k+\lceil \frac{j_1 l}{m}\rceil$, $\alpha_{j_2}=j_2k+\lceil \frac{j_2 l}{m}\rceil$ with $\frac{m}{2}\leq j_1, j_2 \leq m$ and not all of $\frac{j_1l}{m}, \frac{j_2l}{m}$ are integers. By the definition of $g_Q^{-1}$, it equals
\[
    \left(g(e_{\alpha_{j_1}}, e_{\alpha_{j_2}})\right)_{1}. 
\]
By Lemma~\ref{order of g_0}, since $\alpha_{j_1}+\alpha_{j_2}\geq 2n+1$, the order of $g(e_{\frac{m-l}{2}k+i(k+1)}, e_{\frac{m-l}{2}k+j(k+1)})$ is greater or equal to the order of $a_{\alpha_{j_1}+\alpha_{j_2}-2n}/t^{m-1-p_{\alpha_{j_1}}-p_{\alpha_{j_2}}}$. Now we have
\begin{align*}
     \ord_ta_{\alpha_{j_1}+\alpha_{j_2}-2n} &\geq \left\lceil \frac{m(\alpha_{j_1}+\alpha_{j_2}-2n)}{mk+l} \right\rceil \\
     & =\left\lceil j_1+j_2-m+ \frac{m \left\lceil \frac{j_1 l}{m} \right\rceil-j_1l+m \left\lceil \frac{j_2 l}{m} \right\rceil-j_2l}{mk+l} \right\rceil.
\end{align*}
Notice that if $m\nmid jl$ for $j=j_1$ or $j_2$, then $\ord_ta_{\alpha_{j_1}+\alpha_{j_2}-2n}\geq j_1+j_2-m+1$, thus $g_Q^{-1}(e_{\alpha_{j_1}}(0), e_{\alpha_{j_2}}(0))=0$ since $p_{\alpha_j}=m-j$. If we require $j_1\leq \frac{m}{2}$ with $m\mid j_1l$ and $j_2>\frac{m}{2}$ with $m\nmid j_2l$ then the same result follows.
    
For (c), when $i\leq \left\lfloor\frac{m}{2} \right\rfloor$ and $m\mid il$, it suffices to compute that $\alpha_{i}-\alpha_{i-1}=k+1$ and $\alpha_{i+1}-\alpha_{i}=k$. When $\left\lfloor\frac{m}{2}\right\rfloor< i<m$ and $m\mid il$, it suffices to compute that $\alpha_{i}-\alpha_{i-1}=k$ and $\alpha_{i+1}-\alpha_{i}=k+1$.

For (d), we first consider the elements in $Q^\vee_{div}$. By (c) we only need to consider $e_{\alpha_i}$ so that $i> \left\lfloor\frac{m}{2}\right\rfloor$. Now, if $e_{\alpha_i}(0)\in Q^\vee_{div}$ with $i> \left\lfloor\frac{m}{2}\right\rfloor$ and $m\mid il$. Notice that this implies that $m\nmid (i-1)l$ and $i-1>\left\lfloor\frac{m}{2}\right\rfloor$, then $e_{\alpha_{i-1}}\in Q^\vee_{>,ndiv}$ and hence $e_{\alpha_i}(0)=\Phi^\vee(e_{\alpha_{i-1}})\in \Phi^\vee(Q^\vee_{>,ndiv})$. For any $e_{\alpha_i}(0)\in Q^\vee_{>,ndiv}\cap (Q^k)^\vee$, we see that $i>\left\lfloor\frac{m}{2}\right\rfloor+1$ since $\alpha_{\left\lfloor\frac{m}{2}\right\rfloor+1}-\alpha_{\left\lfloor\frac{m}{2}\right\rfloor}=k+1$. Then $e_{\alpha_{i-1}}(0)\in Q^\vee_{>,ndiv}$ otherwise $m\mid (i-l)l$ which implies that $e_{\alpha_i}\in (Q^{k+1})^\vee$ by (c). Hence $e_{\alpha_i}(0)=\Phi^\vee(e_{\alpha_{i-1}})\in \Phi^\vee(Q^\vee_{>,ndiv})$.

For (e), take $e_{\alpha_i}\in Q^\vee_{div}$, by (c), if $i\leq \left\lfloor\frac{m}{2}\right\rfloor$, then $e_{\alpha_{i+1}}(0)\in (Q^{k})^\vee$, and if $ \left\lfloor\frac{m}{2}\right\rfloor<i < m$, then $e_{\alpha_{i+1}}(0)\in (Q^{k+1})^\vee\cap Q^\vee_{>,ndiv}$ since $m\nmid (i+1)l$. For $\Phi^\vee(e_{\alpha_{m}}(0))$, we claim that the projection of $\Phi^\vee(e_{\alpha_{m}}(0))$ to $\left\langle e_{\alpha_{i}} \mid i\leq \left\lfloor\frac{m}{2}\right\rfloor ,e_{\alpha_{i}}\in (Q^{k+1})^\vee \right\rangle$ is zero. So we have 
    \[
    \overline{(\Phi^\vee(Q^\vee_{div}))_{=k+1}}\subset Q^\vee_{>,ndiv}\cap (Q^{k+1})^\vee.
    \]
    For $e_{\alpha_i}(0)\in Q^\vee_{>,ndiv}$, either $\Phi^\vee(e_{\alpha_i}(0))\in (Q^{k})^\vee$ or $\Phi^\vee(e_{\alpha_i}(0))\in (Q^{k+1})^\vee$ which means $\Phi^\vee(e_{\alpha_i}(0))\in Q^\vee_{>,ndiv}$ by the second part of (c).

    Now we need to prove the claim, recall that the action of $\Phi^\vee$ is given by the matrix $\operatorname{R}(\operatorname{gr}_P(f))$, so we need to compute $(a_{\alpha_{m+1-i}})_{m+1-i}$ for $1\leq i \leq \left\lfloor\frac{m}{2}\right\rfloor$ so that $e_{\alpha_i}(0)\in (Q^{k+1})^\vee$. Recall that $e_{\alpha_i}(0)\in (Q^{k+1})^\vee$ means $\alpha_i-\alpha_{i-1}=k+1$, which says that $\left\lfloor \frac{il}{m}\right\rfloor-\left\lfloor\frac{(i-1)l}{m}\right\rfloor=1$. Hence, in particular, $m\nmid (i-1)l$.

    To compute $\ord_t a_{\alpha_{m+1-i}}$, it suffices to compute 
    \[
    m\alpha_{m+1-i}=(m+1-i)mk+m\left \lceil \frac{(m+1-i)l}{m}\right \rceil
    \]
    since $m\nmid (i-1)l$, we see that $\ord_t a_{\alpha_{m+1-i}}> m+1-i$ hence the claim follows.
\end{proof}

\begin{lemma}\label{choice of isotropic subspace of Q div}
    Recall $\dim Q^\vee_{div} = \gcd(m,l) \equiv m$, then we have
    \begin{itemize}
        \item[(i)] If $m\equiv 0$, then there exists an isotropic subspace $(Q^\vee_{div})_0$ of dimension $\frac{\gcd(m,l)}{2}$. 
        \item[(ii)] If $m\equiv 1$, then there exists a vector $v$ of $Q^\vee_{div}$, and an isotropic subspace $(Q^\vee_{div})_0$ such that 
        \[
            g_Q^{-1}(v,v)=1,
            \quad \text{and} \quad
            g_Q^{-1}(v,w)=0 \ \text{ for all } w \in (Q^\vee_{div})_0.
        \]
        Moreover, $v\notin (Q^{k+1})^\vee$.
    \end{itemize}
\end{lemma}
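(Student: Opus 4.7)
The plan is to use the explicit pairing structure on $Q^\vee_{div}$, combined with standard linear algebra over $\mathbb{C}$ and the splitting information from Lemma~\ref{Q div}(c).

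First I would nail down the restricted pairing. Repeating the coefficient computation from the proof of Lemma~\ref{Q div}(a), one sees that $g_Q^{-1}(e_{\alpha_i},e_{\alpha_j})=\pm 1$ precisely when $i+j=m$, and that $e_{\alpha_m}$ is orthogonal to every other basis vector. Hence $g_Q^{-1}|_{Q^\vee_{div}}$ decomposes orthogonally into hyperbolic pairs $(e_{\alpha_i},e_{\alpha_{m-i}})$ for $1\le i<m/2$ with $m\mid il$, a self-paired anisotropic summand $\langle e_{\alpha_{m/2}}\rangle$ (occurring exactly when $m$ is even, in which case the type~B/D parity constraints force $l$ to be even as well), and the radical line $\langle e_{\alpha_m}\rangle$. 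In particular the rank equals $\gcd(m,l)-1$.

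For part (i), $m\equiv 0$ makes $\gcd(m,l)$ even. Over $\mathbb{C}$, a symmetric form of rank $r$ on a $d$-dimensional space admits a maximal isotropic subspace of dimension $d-\lceil r/2\rceil$, which here gives $\gcd(m,l)/2$. Explicitly, one takes one isotropic vector from each hyperbolic pair together with the radical vector $e_{\alpha_m}$ (skipping the self-paired summand $\langle e_{\alpha_{m/2}}\rangle$), obtaining the desired $(Q^\vee_{div})_0$.

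For part (ii), $m\equiv 1$ makes $\gcd(m,l)$ odd and removes the self-paired summand, leaving $s=(\gcd(m,l)-1)/2$ hyperbolic pairs and the radical. To produce $v$, I would choose any single hyperbolic pair $(e_{\alpha_{i_j}},e_{\alpha_{m-i_j}})$ with $i_j\le (m-1)/2$---by Lemma~\ref{Q div}(c), $e_{\alpha_{i_j}}\in (Q^{k+1})^\vee$ while $e_{\alpha_{m-i_j}}\in (Q^k)^\vee$---and set $v=\alpha\,e_{\alpha_{i_j}}+\beta\,e_{\alpha_{m-i_j}}$ with $\alpha,\beta\ne 0$ and $2\alpha\beta\,g_Q^{-1}(e_{\alpha_{i_j}},e_{\alpha_{m-i_j}})=1$. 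The nonzero $(Q^k)^\vee$-component then forces $v\notin (Q^{k+1})^\vee$, and $(Q^\vee_{div})_0$ is realized as any isotropic subspace of $v^\perp\cap Q^\vee_{div}$---for instance the isotropic line orthogonal to $v$ inside the chosen pair, together with one isotropic vector from each remaining hyperbolic pair and the radical line.

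The main obstacle is the constraint $v\notin (Q^{k+1})^\vee$, which prevents $v$ from being a single anisotropic basis vector. The resolution is powered by Lemma~\ref{Q div}(c): each hyperbolic pair straddles the two summands of the decomposition $Q^\vee=(Q^{k+1})^\vee\oplus (Q^k)^\vee$, so forming $v$ as a mixture of the two members of a single pair automatically places $v$ outside $(Q^{k+1})^\vee$.
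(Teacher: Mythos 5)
Your proof has a genuine gap in part~(ii): the argument produces $v$ by ``choosing any single hyperbolic pair,'' but when $\gcd(m,l)=1$ the space $Q^\vee_{div}$ is one-dimensional, spanned by $e_{\alpha_m}$, and there are \emph{no} hyperbolic pairs at all. In that case the claimed construction simply does not apply, and you give no alternative.

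This is connected to an over-claim in your structural analysis. You assert that $g_Q^{-1}(e_{\alpha_i},e_{\alpha_j})=\pm1$ precisely when $i+j=m$, that $e_{\alpha_m}$ is ``orthogonal to every other basis vector,'' and hence that the rank equals $\gcd(m,l)-1$ with $\langle e_{\alpha_m}\rangle$ the radical. None of this is established: Lemma~\ref{Q div}(a) gives only the lower bound $\gcd(m,l)-1$ on the rank, and it says nothing about the entries $g_Q^{-1}(e_{\alpha_i},e_{\alpha_j})$ with $i+j>m$ and both indices divisible by $m/\gcd(m,l)$ (the argument in Lemma~\ref{Q div}(b) kills such off-diagonal entries only when at least one index is \emph{not} divisible). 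In particular, $g_Q^{-1}(e_{\alpha_m},e_{\alpha_m})$ is not forced to vanish; indeed the paper shows it is \emph{nonzero} when $\gcd(m,l)=1$ — there, $\gcd(m,2n)=1$ forces $\ord_t a_i+\ord_t a_{2n-i}>m$, which via the recursion as in Lemma~\ref{pairing g in skew case} gives $\ord_t g(e_{2n},e_{2n})=1$, so the restricted pairing is nondegenerate on the line $Q^\vee_{div}=\langle e_{\alpha_m}\rangle$. This is exactly how the case you miss is salvaged: $v$ is a scalar multiple of $e_{\alpha_m}$, $(Q^\vee_{div})_0=0$, and $v\notin(Q^{k+1})^\vee$ because by Lemma~\ref{Q div}(c) the vector $e_{\alpha_m}$ lies in $(Q^k)^\vee$.

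You should also note that the constraint $v\notin(Q^{k+1})^\vee$ can be handled without any explicit ``pair-straddling'' construction: since every basis vector $e_{\alpha_i}(0)$ of $Q^\vee_{div}\cap(Q^{k+1})^\vee$ has $i\le\lfloor m/2\rfloor=(m-1)/2$ (Lemma~\ref{Q div}(c)), any two such vectors satisfy $\alpha_i+\alpha_j<2n$ and therefore pair to $0$, so any $v\in(Q^{k+1})^\vee\cap Q^\vee_{div}$ is automatically isotropic, contradicting $g_Q^{-1}(v,v)=1$. This argument is uniform in $\gcd(m,l)$ and avoids the unestablished orthogonal decomposition. For part~(i), your explicit description (take one vector from each hyperbolic pair together with $e_{\alpha_m}$) also presupposes $e_{\alpha_m}$ is in the radical; since that is not guaranteed, it is safer to argue abstractly from the rank bound: on a $\gcd(m,l)$-dimensional space of even dimension with rank $\ge\gcd(m,l)-1$, a maximal isotropic subspace over $\mathbb C$ has dimension $\gcd(m,l)/2$ regardless of whether the rank is full or drops by one.
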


\begin{proof}
    When $m\equiv 0$, then $\dim Q^\vee_{div}\equiv 0$ so the statement follows from (a) of Lemma \ref{Q div}.

    When $m\equiv 1$, then if $\dim Q^\vee_{div}>1$, we can easily pick the vector $v_0$ and the subspace $(Q^\vee_{div})_0$ satisfying the conditions. When $\dim Q^\vee_{div}=\operatorname{gcd}(m,l)=1$, we see that $\gcd(m, 2n)=1$, which implies that $\ord_t a_{i}+\ord_t a_{2n-i}>m$ for any $1\leq i < 2n$. Thus by the similar recurrence relation as in Lemma \ref{pairing g in skew case}, we have $\ord_tg(e_{2n}, e_{2n})=1$ and hence the restriction of $g_Q^{-1}$ on $Q^\vee_{div}=\left \langle e_{2n}(0) \right \rangle$ is non-degenerate, hence the result follows.

    By (c) of Lemma \ref{Q div}, we see that if $v\in (Q^{k+1})^\vee$, then $v$ is a linear combination of $e_{\alpha_i}(0)$ so that $i\leq \lfloor\frac{m}{2}\rfloor$. So any $i, j\leq \lfloor\frac{m}{2}\rfloor=\frac{m-1}{2}$, we see that $\alpha_i+\alpha_j<2n$ so the pairing of $e_{\alpha_i}(0)$ and $e_{\alpha_j}(0)$ vanishes hence $v$ would be isotropic, which is a contradiction.
\end{proof}

\begin{proposition}\label{maximal isotropic in even case}
    Suppose that $m\equiv 0$ and $l\neq 0$. For any maximal isotropic subspace $(Q^\vee_{div})_0$ of $Q^\vee_{div}$, the subspace 
    \[
        {}^*W = (Q^\vee_{div})_0+Q^\vee_{>,ndiv}
    \]
    satisfies conditions ($\dagger1$), ($\dagger2$) and ($\dagger3$) above. Consequently, the reduction of $\gamma_W$ has partition $[(k+1)^{l}, k^{m-l}]$ and the induced pairing on $\sE_W$ is non-degenerate.
\end{proposition}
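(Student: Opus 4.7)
The plan is to verify the three conditions $(\dagger 1)$, $(\dagger 2)$, $(\dagger 3)$ one by one, relying throughout on the structural information collected in Lemma~\ref{Q div}. The concluding statement about the reduction partition and the non-degeneracy of the induced pairing on $\mathcal{E}_W$ then follows directly from the general correspondence set up before the proposition.

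For $(\dagger 1)$, the first step is a dimension count. Since $m\equiv 0$, part~(a) of Lemma~\ref{Q div} gives $\dim Q^\vee_{div}=\gcd(m,l)\equiv 0$, so a maximal isotropic $(Q^\vee_{div})_0$ has dimension $\gcd(m,l)/2$; counting indices $i\in(\tfrac{m}{2},m]$ with $m\mid il$ gives $\dim Q^\vee_{>,ndiv}=(m-\gcd(m,l))/2$, so $\dim {}^*W=m/2$, which is the half-dimension of $Q^\vee$. Isotropy of ${}^*W$ is immediate: $(Q^\vee_{div})_0$ is isotropic by construction, $Q^\vee_{>,ndiv}$ is isotropic by part~(b), and the cross pairing between $(Q^\vee_{div})_0\subset Q^\vee_{div}$ and $Q^\vee_{>,ndiv}$ vanishes by the same part~(b). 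Hence ${}^*W$ is maximal isotropic.

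For $(\dagger 2)$, I would first rewrite the left-hand side as a projection. Using the decomposition $Q^\vee=(Q^{k+1})^\vee\oplus (Q^k)^\vee$, for any $v\in (Q^k)^\vee\cap({}^*W+(Q^{k+1})^\vee)$, writing $v=w+q$ with $w\in{}^*W$ and $q\in(Q^{k+1})^\vee$ shows that $v$ is exactly the $(Q^k)^\vee$-component of $w$. Therefore
\[
(Q^k)^\vee\cap({}^*W+(Q^{k+1})^\vee)=\operatorname{pr}_{(Q^k)^\vee}({}^*W).
\]
Part~(c) of Lemma~\ref{Q div} tells us that $Q^\vee_{div}$ and $Q^\vee_{>,ndiv}$ each split cleanly into their $(Q^{k+1})^\vee$- and $(Q^k)^\vee$-parts along the basis vectors $e_{\alpha_i}(0)$, so $\operatorname{pr}_{(Q^k)^\vee}(Q^\vee_{div})$ and $\operatorname{pr}_{(Q^k)^\vee}(Q^\vee_{>,ndiv})$ sit inside $(Q^\vee_{div}+Q^\vee_{>,ndiv})\cap (Q^k)^\vee$. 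Then part~(d) yields
\[
\operatorname{pr}_{(Q^k)^\vee}({}^*W)\subset (Q^\vee_{div}+Q^\vee_{>,ndiv})\cap (Q^k)^\vee\subset \Phi^\vee(Q^\vee_{>,ndiv})\subset \Phi^\vee({}^*W),
\]
which is exactly $(\dagger 2)$.

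For $(\dagger 3)$, I would directly invoke part~(e): since $(Q^\vee_{div})_0\subset Q^\vee_{div}$ and $Q^\vee_{>,ndiv}\subset Q^\vee_{>,ndiv}$, the projection of $\Phi^\vee({}^*W)$ onto $(Q^{k+1})^\vee$ lies in $Q^\vee_{>,ndiv}\cap (Q^{k+1})^\vee$. Because $Q^\vee_{>,ndiv}\subset {}^*W$, this projection lies in ${}^*W\cap (Q^{k+1})^\vee$, and therefore
\[
\Phi^\vee({}^*W)\subset (Q^k)^\vee+{}^*W\cap (Q^{k+1})^\vee,
\]
proving $(\dagger 3)$.

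The only real subtlety is the book-keeping in $(\dagger 2)$, namely recognizing that the intersection on the left is a projection and that the projection of $(Q^\vee_{div})_0$ to $(Q^k)^\vee$, even though it is not contained in $Q^\vee_{div}$, still lies inside $(Q^\vee_{div}+Q^\vee_{>,ndiv})\cap (Q^k)^\vee$ by virtue of the basis splitting from part~(c); all the other implications are immediate consequences of the lemma. Once the three conditions are verified, the preceding discussion shows that $\gamma_W$ has reduction partition $[(k+1)^{l}, k^{m-l}]$ and that the pairing induced from $g$ on $\mathcal{E}_W$ is non-degenerate, completing the proof.
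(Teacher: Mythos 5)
Your proof is correct and follows the same route as the paper: maximal isotropy of ${}^*W$ via parts (a) and (b) of Lemma~\ref{Q div}, condition $(\dagger 2)$ via the projection description and part (d), and condition $(\dagger 3)$ via part (e). The extra details you supply (the explicit dimension count showing $\dim{}^*W = m/2$, and the observation that $(Q^k)^\vee\cap({}^*W+(Q^{k+1})^\vee)$ is the projection of ${}^*W$) are exactly the implicit bookkeeping the paper compresses, so there is no genuine deviation.
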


\begin{proof}
    By (a) and (b) of Lemma \ref{Q div} and Lemma \ref{choice of isotropic subspace of Q div} above, we see that ${}^*W$ is maximal isotropic. 

    Then we need to show that ${}^*W$ satisfies conditions (2) and (3). Notice that $(Q^{k})^\vee\cap ({}^*W+(Q^{k+1})^\vee)$ equals to the projection of ${}^*W$ to $(Q^{k})^\vee$. We may prove that the projection of $Q^\vee_{div}+Q^\vee_{>,ndiv}$ to $(Q^k)^\vee$, which equals to $(Q^\vee_{div}+Q^\vee_{>,ndiv})\cap(Q^k)^\vee$, is contained in $\Phi^\vee(Q^\vee_{>,ndiv})$, which is further contained in $\Phi^\vee({}^*W)$. So this follows from (d) of Lemma \ref{Q div}.

    For (3) we show that 
    \[
    \Phi^\vee(Q^\vee_{div}+Q^\vee_{>,ndiv})\subset (Q^{k})^\vee+Q^\vee_{>,ndiv}\cap (Q^{k+1})^\vee
    \]
    which is equivalent to prove that $\overline{(\Phi^\vee(Q^\vee_{div}+Q^\vee_{>,ndiv}))_{=k+1}}\subset Q^\vee_{>,ndiv}\cap (Q^{k+1})^\vee$, so this follows from (e) of Lemma \ref{Q div}.
\end{proof}

\subsubsection{The case that $m>2n$} If $m>2n$, then we write $m=2n\alpha+s$ with $0\leq s <2n$ and $\alpha>0$. We then consider a new polynomial 
\[
    f^\prime(\lambda) = \lambda^{2n} + a_2^\prime \lambda^{2n - 2} + \cdots + a_{2n - 2}^\prime \lambda^2 + a_{2n}^\prime,
\]
where $a_{2i}^\prime=a_{2i}/t^{2i\alpha}$. 

Then if $s\equiv 0$, we may take $\gamma_0^\prime\in L^+\mathfrak{so}_{2n}$ so that $\chi(\gamma^\prime)=f^\prime(\lambda)$, then $\gamma:=t^\alpha\gamma_0^\prime\in L^+\mathfrak{so}_{2n}$ and $\chi(\gamma)=f(\lambda)$. Thus the reduction of $\gamma$ is $[1^{2n}]$.

If $s\equiv 1$, then we may firstly take $\gamma_0^\prime$ as above, then take $g_0^\prime$ as before associated to $f^\prime(\lambda)$. Then we modify $(\gamma_0^\prime, g_0^\prime)$ to $(\gamma^\prime, g^\prime)$ as before so that $g^\prime$ is optimal. Now we consider $(\gamma^\prime:=t^\alpha\gamma_0^\prime, g^\prime)$, since $\alpha\geq 1$, we may take $P^\prime$ so that $g:=(P^\prime)^\vee g^\prime P^\prime$ is optimal w.r.t. $\{1\}$ and the first column of $\gamma:=(P^\prime)^{-1} \gamma^\prime P^\prime$ is divisible by $t^2$.

\subsection{Modification of Pairings}\label{subsec:modify pairing}

In the previous subsection, we constructed symmetric pairings $g_i$ on each $\cE_i:=\tfrac{\cO[\lambda]}{(f_i(\lambda))}$. Some of these pairings are non-degenerate, while others are degenerate. In this subsection, we modify the pairings inductively according to the modification of the partition $\mathbf{d}_A$. In this way, we eventually obtain a non-degenerate pairing.

\vspace{0.3cm}
\paragraph{\textbf{Type D Case:}}
\begin{itemize}
    \item If $m_1 \equiv 0$: $\widetilde{\bf{d}}_1$ is defined in (D1.1) and (D1.2) in \S~\ref{subsec:modify partition}. By Proposition~\ref{m even and l=0} and~\ref{maximal isotropic in even case}, we have non-degenerate pairing $g_1$. Then, there is no need to modify.
    
    \item If $m_1 \equiv 1$: Let $r \geq 2$ be the smallest integer such that $m_2 \equiv \cdots \equiv m_{r-1} \equiv 0$ but $m_r \equiv 1$.  Recall
    \begin{align}\label{modify partition}
    \begin{split}
        &\left( \widetilde{\bf{d}}_1, \bf{d}_2, \ldots, \bf{d}_{r-1}, \widetilde{\bf{d}}_r \right) \\ 
        &= 
        \begin{cases}
        \left( (k_1+1)^{l_1+1}, k_1^{m_1-l_1-1},\bf{d}_2, \ldots, \bf{d}_{r-1}, (k_r+1)^{l_r-1}, k_r^{m_r-l_r+1} \right), & l_r \neq 0;\\
        \left( (k_1+1)^{l_1+1}, k_1^{m_1-l_1-1},\bf{d}_2, \ldots, \bf{d}_{r-1},  k_r^{m_r-l_r-1}, k_r-1 \right), & l_r = 0,
         \end{cases}
     \end{split}
    \end{align}
    is defined in (D2) of ($\star$) in \S\ref{modify partition}. In the following, we construct a non-degenerate pairing on $\bigoplus_{i=1}^r\cE_i$ as in Proposition~\ref{choice of W}.
\end{itemize}
By induction, we arrive at the following Proposition.

\begin{proposition}\label{prop:D}
    For any $\gamma\in L^\heartsuit \mathfrak{so}_{2n}$, if $\chi_{\gamma}(\lambda)$ decomposes as in \eqref{eq:chi decomposition}, then $\gamma$ has a reduction with partition $\bf{d}_D$ as in ($\star$) in \S\ref{modify partition}.
\end{proposition}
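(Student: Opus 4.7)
The plan is to work block by block along the Newton polygon decomposition $\chi_{\gamma}(\lambda)=\prod_{i=1}^r f_i(\lambda)$. By Corollary~\ref{cor:only depends on char} and Lemma~\ref{lem:congjuation invriance of RT}, the reduction types depend only on the characteristic polynomial, so we are free to replace $\gamma$ by any explicit model with the same $\chi$. Following Section~\ref{single slope}, for each factor $f_i(\lambda)$ we have a pair $(\gamma_i,g_i)$ on $\cE_i\cong\sO^{\oplus 2n_i}$ such that $\gamma_i\in L^+\mathfrak{gl}_{2n_i}$ has balanced reduction $[(k_i+1)^{l_i},k_i^{m_i-l_i}]$, $g_i$ is a symmetric $\gamma_i$-compatible pairing with $\ord_t(\det g_i)=m_i$, and $g_i$ is optimal with respect to an explicit index set. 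The goal is to assemble from these data a global pair $(\gamma,g)$ on some lattice inside $\bigoplus_i\cE_i^\vee$ that is integral, $\gamma$-compatible, non-degenerate over $\sO$, and whose reduction realizes the partition $\bf{d}_D$ of $(\star)$.

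The first step is to dispose of the factors $f_i$ with $m_i\equiv 0$. Since $\ord_t(\det g_i)=m_i$ is even, there is no parity obstruction, and Propositions~\ref{m even and l=0} and~\ref{maximal isotropic in even case} directly produce a submodule $\cE_{W_i}\subset\cE_i^\vee$ carrying a non-degenerate symmetric pairing together with an endomorphism whose reduction is $[(k_i+1)^{l_i},k_i^{m_i-l_i}]$, or, in the single sub-case $l_i=0$ with $k_i\equiv 0$ and $\gr_P(f_i)$ not a square, is $[k_i+1,k_i^{m_i-2},k_i-1]$. These two possibilities match cases (D1.1), (D1.2.1), and (D1.2.2) of the construction of $\widetilde{\bf{d}}_i$ exactly, once one identifies $\gr_P(f_i)$ with the relevant associated graded polynomial $\gr_{\bf{d}_A,k_i}(\chi_{\gamma})$ appearing in Proposition~\ref{relation for type BD1*}.

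The hard part will be the $m_i\equiv 1$ case. Here each individual $g_i$ has $\ord_t(\det g_i)$ odd, so the naive direct sum $\bigoplus g_i$ cannot be non-degenerate on any maximal $\sO$-lattice; odd slots must be paired against each other. The key observation is that two consecutive ``odd'' factors $f_i$ and $f_{i+r}$ are separated only by factors with $m_j\equiv 0$ for $i<j<i+r$, so one can package them into a single block $\bigoplus_{j=i}^{i+r}\cE_j$. On this block, the strategy is to construct, as indicated in Section~\ref{subsec:modify pairing} and in the yet-to-be-proved Proposition~\ref{choice of W}, a new symmetric pairing $\widetilde g$ by adding off-diagonal corrections between the two odd factors. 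Concretely, the modification transfers one unit of $t$-order from the degenerate direction of $g_i$ to that of $g_{i+r}$, at the cost of shifting one basis vector of $\cE_i$ up by one power of $t$ and one basis vector of $\cE_{i+r}$ down by one; this is precisely the bookkeeping that realizes $\widetilde{\bf{d}}_i=[(k_i+1)^{l_i+1},k_i^{m_i-l_i-1}]$ and the two cases of $\widetilde{\bf{d}}_{i+r}$ in the construction (D2) of $(\star)$. The middle factors, all with $m_j\equiv 0$, are carried along unchanged using the first step.

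After these modifications each block carries a non-degenerate symmetric pairing whose associated endomorphism has reduction equal to that block's contribution to $\bf{d}_D$. Taking the orthogonal direct sum of all blocks produces the desired global pair $(\gamma,g)$. By the discussion in Subsection~\ref{subsec:preparation modify pairing}, since $(\gamma,g)$ is integral, non-degenerate, and has characteristic polynomial $\chi_{\gamma_0}$, it is $L\mathrm{O}_{2n}$-conjugate to $\gamma_0$; hence $\bf{d}_D$ arises as the partition of a reduction of $\gamma_0$. Combined with Proposition~\ref{prop.type D minimal reduction} this forces $\bf{d}_D\in\RT_{\min}(\gamma_0)$ and resolves uniqueness, while the very-even ambiguity is eliminated by Corollary~\ref{very even}. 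The main technical obstacle is therefore the construction and analysis of $\widetilde g$ in Proposition~\ref{choice of W}: verifying integrality of the off-diagonal correction, non-degeneracy of the resulting pairing over $\sO$, and that the reduction of the modified $\gamma$ admits exactly the prescribed partition $(\widetilde{\bf{d}}_i,\bf{d}_{i+1},\ldots,\bf{d}_{i+r-1},\widetilde{\bf{d}}_{i+r})$.
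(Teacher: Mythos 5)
Your proposal matches the paper's proof: the paper likewise reduces to the single-slope pairs $(\gamma_i,g_i)$ of \S\ref{single slope}, disposes of the even-$m_i$ factors via Propositions~\ref{m even and l=0} and~\ref{maximal isotropic in even case}, groups consecutive odd-$m_i$ factors together with the intervening even-$m_i$ ones, and appeals to Proposition~\ref{choice of W} for the gluing, then concludes by induction. One small imprecision worth noting: you describe the middle factors as ``carried along unchanged using the first step,'' but in the paper only those middle blocks whose pairing is already non-degenerate (namely $l_i\neq 0$, or $l_i=0$ with $k_i\equiv 1$) are split off; the remaining ones (with $l_i=0$, $k_i\equiv 0$) are kept inside the chain, and their degenerate directions $v_i^{\pm}$ are actively linked by the vectors $v_\alpha^{+}\pm\sqrt{-1}\,v_{\alpha+1}^{-}$ in the construction of ${}^*W$---so the off-diagonal correction threads through the whole chain rather than coupling only the two odd endpoints.
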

\paragraph{\textbf{Type B Case:}}
Recall that
\[
\mathbf d_A=\mathrm{Re}(\mathbf d_1,\mathbf d_2,\ldots)\cup[1].
\]
Assume that $\ord_t\chi_\gamma(0)=m$.

\begin{itemize}
    \item If $m\equiv 0$: define $\mathbf d_B := [\mathbf d_D,1]$ as in (B1) of $(\star\star)$ in \S\ref{modify partition}.
    \item If $m\equiv 1$: we are in (B2) of $(\star\star)$ in \S\ref{modify partition}. 
    There exists an integer $r>1$ such that $m_r\equiv 1$ and $m_j\equiv 0$ for all $j>r$. 
    The initial segment $(\widetilde{\mathbf d}_1,\ldots,\widetilde{\mathbf d}_{r-1})$ is modified exactly as in the type~D case, and the remaining part
    \[
        (\mathbf d_r,\mathbf d_{r+1},\ldots)\cup[1]
    \]
    is modified to
    \[
        [\widetilde{\mathbf d}_r,\mathbf d_{r+1},\ldots],
        \qquad
        \widetilde{\mathbf d}_r=\bigl[(k_r+1)^{l_r+1},\,k_r^{m_r-l_r-1}\bigr].
    \]
    After reordering, we obtain a type B partition $\bf{d}_B$.
\end{itemize}

\begin{proposition}\label{prop:B}
For any $\gamma\in L^{\heartsuit}\mathfrak{so}_{2n+1}$, if $\chi_\gamma(\lambda)$ decomposes as in \eqref{eq:chi decomposition}, then $\gamma$ admits a reduction whose associated partition is $\mathbf d_B$ (as in $(\star\star)$ of \S\ref{modify partition}).
\end{proposition}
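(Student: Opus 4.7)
The plan is to follow the strategy of Proposition~\ref{prop:D} closely, reducing the odd-dimensional case to the even-dimensional one by exploiting the subsection~\ref{subsec:preparation modify pairing}: it suffices to construct on $\cO^{2n+1}$ a pair $(\gamma, g)$ consisting of an element $\gamma$ with $\chi_\gamma = \chi_{\gamma_0}$ and a non-degenerate symmetric bilinear form $g$ with $g\gamma + \gamma^\vee g = 0$, such that the reduction of $\gamma$ has associated partition $\mathbf d_B$. The key observation is that an element $\gamma_0 \in L^\heartsuit \mathfrak{so}_{2n+1}$ always has a 1-dimensional kernel summand in its action on $\cO^{2n+1}$, which provides a ``free'' trivial $[1]$ piece to absorb into the partition construction.

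For Case~B1 ($m \equiv 0$), the construction is essentially a direct orthogonal sum: apply Proposition~\ref{prop:D} to $\chi_{\gamma_0}(\lambda)$ (regarded as a type~D input of degree $2n$ with even total valuation) to obtain a pair $(\gamma', g')$ on $\cO^{2n}$ whose reduction has partition $\mathbf d_D$, then set $(\gamma, g) = (\gamma' \oplus 0,\; g' \oplus 1)$ on $\cO^{2n} \oplus \cO$. Non-degeneracy, symmetry, and the $\mathfrak{so}_{2n+1}$-compatibility are preserved under orthogonal sums, and the resulting partition is $[\mathbf d_D, 1] = \mathbf d_B$.

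For Case~B2 ($m \equiv 1$), let $r>1$ be as in the hypothesis. I would partition the factors into three groups: (i) the factors $f_1, \ldots, f_{r-1}$, (ii) the factor $f_r$ together with the extra $[1]$ piece, and (iii) the factors $f_{r+1}, f_{r+2}, \ldots$ with all $m_j$ even. For groups (i) and (iii), the constructions of \S\ref{single slope} and the type~D modifications of \S\ref{subsec:modify partition} apply verbatim, producing the required partitions and non-degenerate pairings on the respective summands (note that within (i), consecutive odd-$m$ factors are paired as in the D2 case). The new work is in group (ii): we must construct a pair $(\gamma_r, g_r)$ on $\cO^{2n_r+1}$ whose reduction has partition $\widetilde{\mathbf d}_r = [(k_r+1)^{l_r+1},\, k_r^{m_r-l_r-1}]$ and whose pairing is non-degenerate.

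The construction in step (ii) adapts the single-slope machinery of \S\ref{single slope}. Starting with $\gamma_r = \operatorname{R}(f_r(\lambda)) \oplus 0$ and $g_r = g_0 \oplus 1$ on $\cE_r \oplus \cO$, I would conjugate by the diagonal matrix $P$ of Lemma~\ref{integral and optimal} (trivially extended on the extra summand), rescale the pairing analogously, and then choose the maximal isotropic subspace $^*W \subset Q^\vee$ of the reduction quotient so as to promote exactly one $k_r$-part to $k_r+1$. Concretely, the extra 1-dimensional summand contributes to $Q^\vee$ a vector $v_0$ with $g_Q^{-1}(v_0, v_0) = 1$, playing the role of the vector $v$ in Lemma~\ref{choice of isotropic subspace of Q div}(ii); combining this with an isotropic choice on the divisible part as in Lemma~\ref{choice of isotropic subspace of Q div}(ii) and the ``non-divisible'' piece $Q^\vee_{>,ndiv}$ as in Proposition~\ref{maximal isotropic in even case}, and verifying the analogues of conditions $(\dagger1)$--$(\dagger3)$, yields the partition $\widetilde{\mathbf d}_r$ together with a non-degenerate induced pairing. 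The main obstacle is precisely this step: checking that $v_0$ can be chosen to lie in the ``correct'' $(Q^{k+1})^\vee$ component (so that the promotion $k_r \mapsto k_r+1$ occurs once rather than zero or two times) and that conditions $(\dagger2)$, $(\dagger3)$ continue to hold when $(Q^\vee_{div})_0$ is replaced by the pair $\bigl((Q^\vee_{div})_0, v_0\bigr)$ of Lemma~\ref{choice of isotropic subspace of Q div}(ii). Once this is established, taking the orthogonal direct sum of the contributions from (i), (ii), and (iii) and reordering yields the required pair $(\gamma, g)$ on $\cO^{2n+1}$ with partition $\mathbf d_B$, completing the proof.
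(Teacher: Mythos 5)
Your treatment of Case~(B1) matches the paper exactly. The problem is in Case~(B2), where you make a grouping choice that the paper deliberately avoids.

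You split the tail into group~(ii) $=f_r\cup[1]$ and group~(iii) $=f_{r+1},f_{r+2},\ldots$, and claim that group~(iii) can be handled ``verbatim'' by the type~D machinery. This fails whenever some $f_j$ with $j>r$ lands in the degenerate situation $l_j=0$, $k_j\equiv 0$, and $\gr_P(f_j)$ not a square. Treated in isolation via Proposition~\ref{m even and l=0}(2), such an $f_j$ would force the partition $[k_j+1,\,k_j^{m_j-2},\,k_j-1]$; but the target $\mathbf d_B$ in ($\star\star$)(B2) explicitly leaves $\mathbf d_j=[k_j^{m_j}]$ unchanged for all $j>r$. The reason $\mathbf d_j$ can stay unchanged is precisely that $f_{r+1},\ldots,f_{\text{last}}$ sit as \emph{middle} pieces in the chain of Proposition~\ref{choice of W}, where the vectors $v_i^{+}\pm\sqrt{-1}\,v_{i+1}^{-}$ propagate the ``promotion/demotion'' through the middle blocks and cancel it there, with the two odd anchors at the ends absorbing the net shift. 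The paper therefore applies Proposition~\ref{choice of W} to the \emph{entire} tail $f_r,f_{r+1},\ldots,f_{\text{last}},\sE'$ all at once, not to $f_r\cup[1]$ alone. Severing the chain between $f_r$ and $f_{r+1}$ destroys this mechanism and yields the wrong partition.

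There is a second, more local, inconsistency in your group~(ii) construction. You take $g_r=g_0\oplus 1$ on $\cE_r\oplus\cO$, i.e.\ a \emph{perfect} pairing on the extra summand $\cO$, yet a few lines later you say ``the extra 1-dimensional summand contributes to $Q^\vee$ a vector $v_0$.'' These are mutually exclusive: a perfect pairing on $\sE'=\cO$ contributes nothing to $Q=\operatorname{coker}(g)$. To make $\sE'$ contribute a one-dimensional summand of $Q$ (so that it can serve as the second odd anchor in the chain, playing the role of the case $m_r>2n_r$ discussed before Lemma~\ref{rank equality}, where $\dim Q_r=1$ and $\Phi_r=0$), the pairing on $\sE'$ must be \emph{degenerate modulo $t$}. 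The paper's choice is $g'(x,y)=txy$, giving $\ord_t\det g'=1$. ``Rescaling the pairing analogously'' by $P^\vee(\cdot)P/t^{m-1}$ with $P|_{\sE'}=1$ would moreover produce $1/t^{m-1}$, which is not even integral. The fix --- adjoin $\sE'=\cO$ with $\gamma'=0$ and the pairing $g'(x,y)=txy$, then run Proposition~\ref{choice of W} on the whole block $f_r,\ldots,\sE'$ --- is exactly how the paper completes Case~(B2).
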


\begin{proof}
    If we are in the case (B1) of ($\star\star$) in \S\ref{modify partition}, then we first consider  $(\gamma_D, g_D)$, so that $\chi_{\gamma_D}(\lambda)=\chi_{\gamma}(\lambda)$, and the reduction of $\gamma_D$ is $\bf{d}_D$. Then we consider $\gamma=\gamma_D\oplus 0$, and $g=g_D\oplus 1$, so $\gamma\in L^\heartsuit \mathfrak{so}_{2n+1}$ with the reduction being $\bf{d}_B$.

    If we are in the case (B2) of ($\star\star$) in \S\ref{modify partition}, we adjoin a rank-one summand $\sE'=\sO$, and $\gamma'=0$ action on $\sE'$. Moreover, we define a pairing $g_n$ on $\sE'$ by $g'(x,y)=txy$. Then the argument in the proof of Proposition~\ref{choice of W} applies verbatim, and yields an element $\gamma\in L^{\heartsuit}\mathfrak{so}_{2n+1}$ whose reduction has partition $\mathbf d_B$.
\end{proof}

We now focus on the second case of type D.   

By the discussion above, each $\gamma_i$ acts on a free $\mathcal{O}$-module $\mathcal{E}_i$ such that $\gamma_i(0)$ has partition $[(k_i+1)^{l_i}, k_i^{m_i-l_i}]$. For $2\leq i \leq r-1$, since $m_i\equiv 0$, so the pairing $g_i$ we constructed before, if already non-degenerate and the parition remains to be $[(k_i+1)^{l_i}, k_i^{m_i-l_i}]$, then we may deal with such $\sE_i$ separately. Such case happens when $l_i\neq 0$ or $l_i=0$ but $k_i\equiv 1$. Then we may assume that $\gamma_i(0)$ has partition $[k_i^{m_i}]$ for $2\leq i \leq r-1$.

Each $\mathcal{E}_i$ carries a symmetric pairing $g_i$ fitting into a short exact sequence
\[
    0\longrightarrow \mathcal{E}_i\stackrel{g_i}{\longrightarrow}\mathcal{E}_i^\vee\longrightarrow Q_i \longrightarrow 0
\]
where $Q_i$ is equipped with an induced non-degenerate pairing $g_{Q_i}$ and a linear operator $\Phi_i$. Now we consider $\oplus_{i=1}^r\sE_i$, then we have 
\[
    0\longrightarrow \oplus_{i=1}^r\sE_i\stackrel{\oplus_{i=1}^rg_i}{\longrightarrow}\oplus_{i=1}^r\sE_i^\vee\longrightarrow \oplus_{i=1}^rQ_i \longrightarrow 0.
\]
Our goal is to find a maximal isotropic subspace $W\subset \oplus_{i=1}^rQ_i$ such that, for its preimage $\mathcal{E}_W \subset \bigoplus_{i=1}^r \mathcal{E}_i^\vee$, the induced endomorphism $\gamma_W$ has the desired partition.

Recall that for $i=1, r$ we have decompositions 
\[
    Q_1=Q^{k_1+1}\oplus Q^{k_1}\quad \text{and} \quad Q_r=Q^{k_r+1}\oplus Q^{k_r}. 
\]
Moreover, for each $1\leq i \leq r$ we have a linear operator $\Phi_i$ on $Q_i$. Although it may happen that $k_1 = k_r + 1$, we always regard $Q^{k_1}$ and $Q^{k_r+1}$ as distinct summands.

We set
\[
    \Phi := \bigoplus_{i=1}^r \Phi_i,
    \qquad
    \Gamma^\vee := \bigoplus_{i=1}^r \gamma_i^\vee,
    \qquad
    \widetilde{\Gamma^\vee} := \bigoplus_{i=1}^r \widetilde{\gamma_i^\vee}.
\]

To unify the notation, for $2 \le j \le r-1$ we set $Q^{k_j} := Q_j$. Thus, we obtain a decomposition 
\[
    Q=Q^{k_1+1}\oplus Q^{k_1}\oplus Q^{k_2}\oplus \cdots \oplus Q^{k_{r-1}}\oplus Q^{k_{r}+1}\oplus Q^{k_r}.
\]
For each $j \ge 0$, we introduce the following filtrations:
\[
    F_{\ge j} := \displaystyle\bigoplus_{\alpha \ge j} Q^\alpha \quad \text{and} \quad F^{\le j} := \displaystyle\bigoplus_{\alpha \le j} Q^\alpha,
\]
and set
\begin{itemize}
    \item $F_{=j} := F_{\ge j} \cap F^{\le j}$;
    \item for a subspace $V \subset Q$, $\overline{(V)_{\ge j}}$ denotes its projection to $F_{\ge j}$.
\end{itemize}

Following the same strategy as before, if $m_r\leq 2n_r$, then for any $j\geq 1$, we compute 
\[
    \operatorname{rk}(\gamma_W)^j = \operatorname{rk}(\Gamma^\vee)^j +\dim W-\dim\left(W\cap \Ker (\Gamma^\vee)^{j-1}\right) -\dim \left(W\cap \left(\tilde{\Gamma^\vee}(\Im(\Gamma^\vee)^{j}\oplus (\Gamma^\vee)^{j-1}W) \right)\right).
\]

If $m_r> 2n_r$, in this case we see that $\dim Q_r=1$ and $\Phi_r=0$. Notice that in this case $\tilde{\Gamma^\vee}$ is no longer invertible, then following the proof of Lemma 2.40 of \cite{WWW24}, we have 
\begin{align*}
    \operatorname{rk}(\gamma_W)^j =& \operatorname{rk}(\Gamma^\vee)^j +\dim W-\dim\left(W\cap \Ker (\Gamma^\vee)^{j-1}\right) -\dim \left(W\cap \left(\tilde{\Gamma^\vee}(\Im(\Gamma^\vee)^{j}\oplus (\Gamma^\vee)^{j-1}W) \right)\right)  \\
    &-\dim\left( \Im(\Gamma^\vee)^{j-1}P_W\cap \Ker \tilde{\Gamma^\vee} \right).
\end{align*}
Notice that by the construction of $\gamma_r$, we see that if $j\geq 2$, then $\Im(\Gamma^\vee)^{j-1}P_W\cap \Ker \tilde{\Gamma^\vee}=0$ and if the projection of $W$ to $Q_r$ is surjective, then $\Im P_W\cap \Ker \tilde{\Gamma^\vee}=0$. Now we assume that the projection of $W$ to $Q_r$ is surjective and the intersection of $W$ with $Q_r$ is zero. And as we will see, the choice of $W$ as in Proposition \ref{choice of W} satisfies this condition.

Then we have 
\[
    W\cap \Ker (\Gamma^\vee)^{j-1}=W\cap F^{\leq j-1}, 
\]
and
\[
    W\cap\big(\tilde{\Gamma^\vee}(\Im(\Gamma^\vee)^{j}\oplus (\Gamma^\vee)^{j-1}W)\big)=W\cap \big(\Phi(F_{\geq j+1})\oplus \Phi(\overline{(W)_{\geq j}}\cap F_{=j})\big).
\]
If $m_r>2n_r$, then under the assumption above, when $j=1$, we see that $\overline{(W)_{\geq j}}\cap F_{=j}=0$.

\begin{lemma}\label{rank equality}
The partition of $\gamma_W$ is the reordering of \eqref{modify partition} if and only if the following rank conditions hold:
\begin{itemize}
    \item[(i)]
    If $l_r \neq 0$, then
    \[
    \rk(\gamma_W)^j - \rk(\Gamma^\vee)^j =
    \begin{cases}
        1, & k_r+1 \le j \le k_1,\\
        0, & \text{otherwise}.
    \end{cases}
    \]
    
    \item[(ii)]
    If $l_r = 0$, then
    \[
    \rk(\gamma_W)^j - \rk(\Gamma^\vee)^j =
    \begin{cases}
        1, & k_r \le j \le k_1,\\
        0, & \text{otherwise}.
    \end{cases}
    \]
\end{itemize}
\end{lemma}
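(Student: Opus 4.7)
This is a Young-diagram bookkeeping statement. Recall that for any nilpotent endomorphism $N$ with Jordan type $\mu$, one has $\rk(N^j) = \sum_i \max(\mu_i - j, 0)$, so the map from partitions of a fixed integer to rank sequences is injective. Since $\gamma_W$ and $\Gamma^\vee$ are both nilpotent and act on vector spaces of the same dimension, the partition of $\gamma_W$ is uniquely determined by the partition of $\Gamma^\vee$ (which is the reordering of $\bf{d}_1,\dots,\bf{d}_r$) together with the differences $\delta_j := \rk(\gamma_W^j) - \rk((\Gamma^\vee)^j)$. Hence the if-and-only-if in the lemma reduces to checking that the two partitions in question differ by exactly the rank differences prescribed in (i) and (ii).

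Comparing the reorderings of $(\bf{d}_1,\ldots,\bf{d}_r)$ and $(\widetilde{\bf{d}}_1, \bf{d}_2, \ldots, \bf{d}_{r-1}, \widetilde{\bf{d}}_r)$ using the explicit description in \eqref{modify partition}, the two partitions differ by exactly two elementary moves: one part of size $k_1$ is replaced by a part of size $k_1+1$ (the modification $\bf{d}_1\to\widetilde{\bf{d}}_1$), and one part of size $k_r+1$ (when $l_r\neq 0$) or of size $k_r$ (when $l_r=0$) is replaced by a part of size one smaller (the modification $\bf{d}_r\to\widetilde{\bf{d}}_r$). All intermediate blocks $\bf{d}_2,\ldots,\bf{d}_{r-1}$ are preserved.

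For a single Jordan block one has $\rk(J_\lambda^j)=\max(\lambda-j,0)$. Incrementing a part of length $k_1$ therefore increases $\rk(\cdot^j)$ by $1$ for $0\le j\le k_1$ and leaves it unchanged for $j\ge k_1+1$; while decrementing a part of length $k_r+1$ (resp.\ $k_r$) decreases $\rk(\cdot^j)$ by $1$ for $0\le j\le k_r$ (resp.\ $0\le j\le k_r-1$) and leaves it unchanged otherwise. Summing the two contributions, the increments cancel on the initial window and the surviving $+1$ occupies precisely $k_r+1\le j\le k_1$ in the case $l_r\ne 0$, and $k_r\le j\le k_1$ in the case $l_r=0$; for $j\ge k_1+1$ both contributions vanish. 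This is exactly (i) (resp.\ (ii)), proving the forward direction.

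The converse then follows from the injectivity recalled in the first paragraph: given the rank differences in (i) or (ii) together with the partition of $\Gamma^\vee$, the partition of $\gamma_W$ is uniquely pinned down, and we have just verified that the reordering of \eqref{modify partition} realizes these values, so the two must coincide. The only point requiring care is the endpoint bookkeeping at $j=k_r$, which is precisely what forces the case split between (i) and (ii); no deeper obstacle appears, since the genuine work — realizing these rank differences via a subspace $W$ satisfying the isotropy and $\Phi$-compatibility conditions of \S\ref{single slope} — will be carried out in the subsequent construction of $W$.
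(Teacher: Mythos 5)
Your proposal is correct, and it fills in what the paper leaves implicit (the lemma is stated without proof). The key observations — that $\rk(N^j)=\sum_i\max(\mu_i-j,0)$ determines the partition $\mu$, that this quantity is invariant under reordering, and that the passage from $(\bf{d}_1,\ldots,\bf{d}_r)$ to $(\widetilde{\bf{d}}_1,\bf{d}_2,\ldots,\bf{d}_{r-1},\widetilde{\bf{d}}_r)$ in \eqref{modify partition} amounts to incrementing one part $k_1\to k_1+1$ and decrementing one part $k_r+1\to k_r$ (if $l_r\neq 0$) or $k_r\to k_r-1$ (if $l_r=0$) — are exactly the bookkeeping needed, and your computation of the resulting rank differences is accurate. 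The case split at $j=k_r$ and the cancellation on $0\le j\le k_r$ (resp.\ $0\le j\le k_r-1$) come out exactly as stated in (i) and (ii). One minor remark: the assertion that the partition of $\Gamma^\vee$ equals the reordering of $(\bf{d}_1,\ldots,\bf{d}_r)$ uses that transposition preserves Jordan type (i.e.\ $\rk((\gamma_i^\vee)^j)=\rk(\gamma_i^j)$), which is worth saying explicitly, but this is standard.
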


As before, it is more convenient to work on $Q^\vee$ with respect to the pairing $\oplus_{i=1}^rg_{Q_i}^{-1}$. We first decompose 
\[
    Q^\vee=(Q^{k_1+1})^\vee\oplus (Q^{k_1})^\vee\oplus (Q^{k_2})^\vee\oplus \cdots \oplus (Q^{k_{r-1}})^\vee\oplus (Q^{k_{r}+1})^\vee\oplus (Q^{k_r})^\vee.
\]
For any integer $j$, we introduce filtrations on $Q^\vee$ by 
\begin{itemize}
    \item $G^{\leq j}=\oplus_{\alpha\leq j}(Q^{\alpha})^\vee$ then ${}^*F_{\geq j}=G^{\leq j-1}$;
    \item $G_{\geq j}=\oplus_{\alpha\geq j}(Q^{\alpha})^\vee$ then ${}^*F^{\leq j}=G_{\geq j+1}$;
    \item ${G_{=j}}=G^{\leq j}\cap G_{\geq j}$.
\end{itemize}

It follows that 
\[
    {}^*F_{=j}={}^*F_{\geq j}+ {}^*F^{\leq j}=G^{\leq j-1}+G_{\geq j+1},
\]
and
\[
    {}^*(W\cap \Ker (\Gamma^\vee)^{j-1})={}^*W+{}^*F^{\leq j-1}={}^*W+G_{\geq j}.
\]

Moreover, we compute
\begin{align*}
     &\dim{}^*(W\cap\tilde{\Gamma^\vee}(\Im(\Gamma^\vee)^{j}\oplus (\Gamma^\vee)^{j-1}W))  \\
     =&\dim\left(\Phi^\vee({}^*W)+\left({}^*F_{\geq j+1}\cap ({}^*F_{=j}+{}^*(\overline{(W)_{\geq j}}))\right)\right) \\
    =&\dim\left(\Phi^\vee({}^*W)+\left(G^{\leq j}\cap (G^{\leq j-1}+G_{\geq j+1}+({}^*W_{\geq j})\right)\right) \\
    =&\dim\left(\Phi^\vee({}^*W)+\left((G^{\leq j-1}\oplus G_{=j})\cap (G^{\leq j-1}\oplus(G_{\geq j+1}+({}^*W_{\geq j}))\right)\right) \\
    =&\dim\left(\Phi^\vee({}^*W)+\left(G^{\leq j-1}+ (G_{\geq j+1}+({}^*W_{\geq j}))\cap G_{=j}\right)\right) \\
    =&\dim\left(\Phi^\vee({}^*W)+\left(G^{\leq j-1}+\overline{({}^*W_{\geq j})_{=j}} \right)\right).
\end{align*}
Here we use the fact that $\overline{(W)_{\geq j}}=(W+F^{\leq j-1})\cap F_{\geq j}$ and $\overline{({}^*W_{\geq j})_{=j}}$ denotes the projection of ${}^*W_{\geq j}$ to $G_{=j}$. And we remark that if $m_r>2n_r$, then  under the assumption above, when $j=1$, we have $\overline{(W)_{\geq j}}\cap F_{=j}=0$, so the equality holds.

We thus obtain the following result.

\begin{lemma}\label{dim reduction}
For each $j \ge 1$, the difference $\rk (\gamma_W)^j-\rk (\Gamma^\vee)^j$ is equal to
     \[
        \dim \left( \overline{\left(\Phi^\vee({}^*W)\right)_{\geq j}}+\overline{({}^*W_{\geq j})_{=j}} \right)-\dim {}^*W_{\geq j}.
     \]
\end{lemma}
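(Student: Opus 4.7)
The plan is a direct manipulation that recombines the rank formula of Lemma~\ref{lem:parition for submodule} with the chain of dimension identities displayed immediately above the lemma statement. The idea is to systematically convert every intersection on $Q$ into a sum on $Q^\vee$ via the orthogonal complement under the natural pairing between $Q$ and $Q^\vee$, using only the two standard identities $\dim V+\dim {}^*V=\dim Q$ and ${}^*(U\cap V)={}^*U+{}^*V$, and then to collapse the auxiliary term $G^{\leq j-1}$ by a single projection identity.

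First I would dispose of the case distinction. In the $m_r\le 2n_r$ regime, the rank formula of Lemma~\ref{lem:parition for submodule} applies verbatim. In the $m_r>2n_r$ regime, one must check that the extra term $\dim(\Im(\Gamma^\vee)^{j-1}P_W\cap\Ker\tilde{\Gamma^\vee})$ vanishes for every $j\ge 1$: for $j\ge 2$ this follows from the shape of $\tilde{\Gamma^\vee}$ noted in the paragraph preceding the lemma, while for $j=1$ it is exactly the content of the running hypothesis $W\cap Q_r=0$ together with surjectivity of $W\twoheadrightarrow Q_r$. In both regimes one thus has
\[
\rk(\gamma_W)^j-\rk(\Gamma^\vee)^j=\dim W-\dim(W\cap\Ker(\Gamma^\vee)^{j-1})-\dim(W\cap B_j),
\]
where $B_j:=\tilde{\Gamma^\vee}(\Im(\Gamma^\vee)^j\oplus(\Gamma^\vee)^{j-1}W)$. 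Using the already established identity ${}^*(W\cap\Ker(\Gamma^\vee)^{j-1})={}^*W+G_{\geq j}$ and the duality above, the first two terms simplify to $\dim G_{\geq j}-\dim {}^*W_{\geq j}$. Using the final line of the author's displayed computation, the third term becomes $\dim(W\cap B_j)=\dim Q-\dim(\Phi^\vee({}^*W)+G^{\leq j-1}+\overline{({}^*W_{\geq j})_{=j}})$. Since $\dim G_{\geq j}-\dim Q=-\dim G^{\leq j-1}$, collecting terms yields
\[
\rk(\gamma_W)^j-\rk(\Gamma^\vee)^j=\dim\bigl(\Phi^\vee({}^*W)+G^{\leq j-1}+\overline{({}^*W_{\geq j})_{=j}}\bigr)-\dim G^{\leq j-1}-\dim {}^*W_{\geq j}.
\]

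Finally I would invoke the elementary identity $\dim(V+G^{\leq j-1})-\dim G^{\leq j-1}=\dim\overline{V_{\geq j}}$ for the splitting $Q^\vee=G^{\leq j-1}\oplus G_{\geq j}$, applied with $V:=\Phi^\vee({}^*W)+\overline{({}^*W_{\geq j})_{=j}}$. Since $\overline{({}^*W_{\geq j})_{=j}}\subset G_{=j}\subset G_{\geq j}$ is fixed by projection, linearity of the projection gives $\overline{V_{\geq j}}=\overline{(\Phi^\vee({}^*W))_{\geq j}}+\overline{({}^*W_{\geq j})_{=j}}$, which is exactly the asserted expression. The only point requiring genuine care is the vanishing of the extra term in Step~1 (crucially using the hypotheses on $W$); everything else is bookkeeping with the filtrations $F_{\geq j}$, $F^{\leq j}$, $G_{\geq j}$, $G^{\leq j}$ and the formal duality between them.
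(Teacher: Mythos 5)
Your proof is correct and follows essentially the same route as the paper: start from the rank formula (with the extra term in the $m_r>2n_r$ case killed by the running hypotheses on $W$), convert each intersection on $Q$ to a sum on $Q^\vee$ via the pairing, then finish with dimension bookkeeping under the splitting $Q^\vee = G^{\leq j-1}\oplus G_{\geq j}$. Your final step, phrased as the single projection identity $\dim(V+G^{\leq j-1})-\dim G^{\leq j-1}=\dim\overline{V_{\geq j}}$, is a cleaner packaging of the paper's two-line expansion of the same fact.
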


\begin{proof}
    By the above discussion and the fact that $\dim W=\dim {}^*W$, we have $\rk (\gamma_W)^j-\rk (\Gamma^\vee)^j$ is equal to 
    \[
        \dim\left({}^*W+G_{\geq j} \right) + \dim\left(\Phi^\vee({}^*W)+\left(G^{\leq j-1}+\overline{({}^*W_{\geq j})_{=j}} \right)\right)-\dim{}^*W-\dim Q^\vee.
    \]
    First, 
    \begin{align*}
        \dim\left({}^*W+G_{\geq j} \right)-\dim{}^*W&=\dim{}^*W+\dim G_{\geq j}-\dim {}^*W_{\geq j}-\dim {}^*W \\
        &=\dim G_{\geq j}-\dim {}^*W_{\geq j}.
    \end{align*}
    Next,
    \begin{align*}
        &\dim\left(\Phi^\vee({}^*W)+\left(G^{\leq j-1}+\overline{({}^*W_{\geq j})_{=j}} \right)\right) \\
        =&\dim \left(\Phi^\vee({}^*W)+\overline{({}^*W_{\geq j})_{=j}} \right)+\dim G^{\leq j-1}-\dim \left(\Phi^\vee({}^*W)+\overline{({}^*W_{\geq j})_{=j}} \right)^{\leq j-1} \\
        =&\dim \overline{\left(\Phi^\vee({}^*W)+\overline{({}^*W_{\geq j})_{=j}} \right)_{\geq j}}+\dim G^{\leq j-1} \\
        =&\dim\left( \overline{\left(\Phi^\vee({}^*W)\right)_{\geq j}}+\overline{({}^*W_{\geq j})_{=j}} \right) +\dim G^{\leq j-1}
    \end{align*}
    Notice that $G_{\geq j}\oplus G^{\leq j-1}=Q^\vee$, then $\rk (\gamma_W)^j-\rk (\Gamma^\vee)^j$ equals to 
    \[
        \dim \left( \overline{\left(\Phi^\vee({}^*W)\right)_{\geq j}}+\overline{({}^*W_{\geq j})_{=j}} \right)-\dim {}^*W_{\geq j}.
    \]
\end{proof}
   
We have fixed bases for $(Q^{k_1+1})^\vee$, $(Q^{k_1})^\vee$, $(Q^{k_r+1})^\vee$, and $(Q^{k_r})^\vee$ in the previous discussion.

\medskip
\noindent
\textbf{The endpoints $i=1,r$:}
Assume first that $l_1\neq 0$ and $l_r\neq 0$. If $m_r\leq 2n_r$, by Lemma~\ref{Q div}, we have subspaces
\[
(Q_1^\vee)_{div},\ (Q_1^\vee)_{>,ndiv}\subset Q_1^\vee,
\qquad
(Q_r^\vee)_{div},\ (Q_r^\vee)_{>,ndiv}\subset Q_r^\vee.
\]
Moreover, by Lemma~\ref{choice of isotropic subspace of Q div}, we may choose
\[
v_1^+ \in (Q_1^\vee)_{div},\quad ((Q_1^\vee)_{div})_0\subset (Q_1^\vee)_{div},
\qquad
v_r^- \in (Q_r^\vee)_{div},\quad ((Q_r^\vee)_{div})_0\subset (Q_r^\vee)_{div},
\]
as in that lemma.
We set
\[
(Q_1^\vee)_{\blacktriangleright}:=((Q_1^\vee)_{div})_0+(Q_1^\vee)_{>,ndiv},
\qquad
(Q_r^\vee)_{\blacktriangleright}:=((Q_r^\vee)_{div})_0+(Q_r^\vee)_{>,ndiv}.
\]

If $m_r> 2n_r$, recall that in this case $(Q_r)^\vee$ has dimension $1$, we then take $v_r^{-}$ be a vector so that $g_{Q_r}^{-1}(v_r^{-}, v_r^{-})=1$, and set $(Q_r^\vee)_{\blacktriangleright}=0$.

If $l_1=0$, then $Q_1^\vee=(Q^{k_1})^\vee$ and, by the discussion preceding Proposition~\ref{m even and l=0}, the restriction of $\Phi^\vee$ to $Q_1^\vee$ is self-adjoint with respect to the induced pairing. Hence we may choose
\[
v_1^+=v^+,
\qquad
(Q_1^\vee)_{\blacktriangleright}=(Q_1^\vee)_0
\]
as in Lemma~\ref{odd maximal isotropic}.

If $l_r=0$, similarly we may choose
\[
v_r^-=v^-,
\qquad
(Q_r^\vee)_{\blacktriangleright}=(Q_r^\vee)_0
\]
as in Lemma~\ref{odd maximal isotropic}.

\medskip
\noindent
\textbf{The middle blocks $2\le i\le r-1$:}
For $2\le i\le r-1$, the restriction of $\Phi^\vee$ to $(Q^{k_i})^\vee$ is self-adjoint with respect to the induced pairing. By Lemma~\ref{even maximal isotropic}, we have a subspace $(Q^{k_i})^\vee_0\subset (Q^{k_i})^\vee$ and vectors $v_i^\pm\in (Q^{k_i})^\vee$.

For $2\le i\le r-1$, set
\[
({}^*W)_{2}^{\,i}
:=
\bigoplus_{\alpha=2}^{i} (Q^{k_\alpha})^\vee_0
\;+\;
\left\langle\, v_\alpha^{+} \pm \sqrt{-1}\, v_{\alpha+1}^{-}\ \right\rangle_{2\le \alpha \le i-1}.
\]
(Here the choice of signs is fixed once and for all.)

\begin{proposition}\label{choice of W}
Let
\[
    {}^*W=(Q_1^\vee)_{\blacktriangleright}+({}^*W)_{2}^{r-1}+(Q_r^\vee)_{\blacktriangleright}.
\]
Then the reduction of $\gamma_W$ is the reordering of \eqref{modify partition} and the induced pairing on $\sE_W$ is non-degenerate.
\end{proposition}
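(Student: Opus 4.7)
The plan is to verify two things: first, that ${}^*W$ is maximal isotropic under the block-diagonal pairing $\bigoplus_{i=1}^r g_{Q_i}^{-1}$, which will imply that the induced pairing on $\sE_W$ is non-degenerate; and second, that the rank identities of Lemma~\ref{rank equality} hold for $\gamma_W$, via the explicit dimension formula of Lemma~\ref{dim reduction}.

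First I would check maximal isotropy. Since the pairing on $Q^\vee$ is block-diagonal with respect to $Q^\vee=\bigoplus_{i=1}^r Q_i^\vee$, summands of ${}^*W$ lying in distinct $Q_i^\vee$ are automatically orthogonal, and the only nontrivial verification concerns the glueing vectors. For $v_\alpha^+ + \varepsilon\sqrt{-1}\,v_{\alpha+1}^-$ with $\varepsilon\in\{\pm 1\}$, the identity
\[
g_{Q_\alpha}^{-1}(v_\alpha^+,v_\alpha^+) + (\varepsilon\sqrt{-1})^2\,g_{Q_{\alpha+1}}^{-1}(v_{\alpha+1}^-,v_{\alpha+1}^-) = 1 - 1 = 0
\]
yields isotropy, while orthogonality to the various $(Q^{k_\alpha})_0^\vee$ follows from the defining properties of $v_\alpha^\pm$ in Lemma~\ref{even maximal isotropic}(1). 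A dimension tally using Lemmas~\ref{choice of isotropic subspace of Q div}, \ref{even maximal isotropic}, \ref{odd maximal isotropic}, together with Lemma~\ref{Q div}(a), (b), then establishes $\dim {}^*W = \tfrac{1}{2}\dim Q^\vee$.

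The second step is the rank verification. I would apply Lemma~\ref{dim reduction} to compute
\[
\delta_j := \dim\bigl(\overline{(\Phi^\vee({}^*W))_{\ge j}} + \overline{({}^*W_{\ge j})_{=j}}\bigr) - \dim {}^*W_{\ge j}
\]
for each $j\ge 1$, by stratifying according to the position of $j$ among the level heights $k_1,\dots,k_r$. The crucial transport properties are: on each middle block $(Q^{k_\alpha})^\vee$, Lemma~\ref{even maximal isotropic}(2), (3) gives $\Phi^\vee(v_\alpha^+) \in \langle v_\alpha^-\rangle + (Q^{k_\alpha})_0^\vee$, so the $v_\alpha^+$-component of a glueing vector is carried by $\Phi^\vee$ into the lower filtration layer $\langle v_\alpha^-\rangle$; on the endpoint blocks, Lemma~\ref{Q div}(d), (e) (when $l_1,l_r\ne 0$) or Lemmas~\ref{odd maximal isotropic}, \ref{even maximal isotropic} (when $l_1=0$ or $l_r=0$) control how $\Phi^\vee$ shifts $(Q_1^\vee)_\blacktriangleright$ and $(Q_r^\vee)_\blacktriangleright$ across the filtration. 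Assembling these, one should find $\delta_j=0$ whenever $j$ lies outside the prescribed range and $\delta_j=1$ whenever $j$ lies inside, which is precisely what Lemma~\ref{rank equality} demands; the partition of $\gamma_W$ is then the reordering of \eqref{modify partition}.

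The main obstacle will be the filtration bookkeeping in the second step: one must distinguish subcases by the parities of $l_1,l_r$, by whether $k_1$ equals $k_r+1$ (so that $Q^{k_1}$ and $Q^{k_r+1}$ collapse into a single level), and in particular by the edge case $m_r>2n_r$, where $\widetilde{\Gamma^\vee}$ fails to be invertible and Lemma~\ref{dim reduction} requires the additional hypothesis that the projection $W\to Q_r$ is surjective with $W\cap Q_r=0$ (which is ensured by the appearance of $v_r^-$ with nonzero coefficient in a glueing vector). The role of $\sqrt{-1}$ is essential here: it is the unique scalar, up to sign, which simultaneously makes the glueing vector isotropic and produces a nontrivial $\Phi^\vee$-shift between adjacent blocks, and this is exactly what creates the one-dimensional defect responsible for the value $\delta_j=1$ at each filtration crossing.
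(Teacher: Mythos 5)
Your plan is correct and follows the same two-step route as the paper's proof: first establish that ${}^*W$ is maximal isotropic (using the block-diagonal structure of $\bigoplus_i g_{Q_i}^{-1}$, Lemma~\ref{even maximal isotropic}, Lemma~\ref{Q div}(a,b), Lemma~\ref{choice of isotropic subspace of Q div}, and the cancellation $1+(\sqrt{-1})^2=0$ for the glueing vectors), and then verify the rank identities of Lemma~\ref{rank equality} via the formula of Lemma~\ref{dim reduction}, stratifying $j$ relative to the levels $k_1,\ldots,k_r$ and invoking Lemma~\ref{Q div}(d,e) and Lemmas~\ref{even maximal isotropic}/\ref{odd maximal isotropic} for the transport of ${}^*W$ under $\Phi^\vee$ across the filtration. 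You have correctly identified the mechanism and the required lemmas, although the proposal is an outline and leaves the case-by-case bookkeeping—which is where the paper's proof does its real work—to be carried out; one small imprecision is the attribution to $\sqrt{-1}$ of "producing a nontrivial $\Phi^\vee$-shift between adjacent blocks": any nonzero coefficient on $v_{\alpha+1}^-$ would prevent $v_\alpha^+$ from lying in ${}^*W_{\ge k_\alpha}$ and thereby generate the one-dimensional defect, so $\sqrt{-1}$ is distinguished only by the isotropy constraint, not by the rank computation.
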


\begin{proof}
Firstly, by Lemma~\ref{Q div} and Lemma~\ref{even maximal isotropic} we see that ${}^*W$ is maximal isotropic. So the induced pairing on $\sE_W$ is non-degenerate.

By Lemma \ref{dim reduction}, in order to ensure the reduction of $\gamma_W$ is the reordering of \eqref{modify partition}, we need the following conditions:
\[
\dim \left( \overline{\left(\Phi^\vee({}^*W)\right)_{\geq j}}+\overline{({}^*W_{\geq j})_{=j}} \right)-\dim {}^*W_{\geq j}=1
\]
for $k_r+1\leq j \leq k_1$ when $l_r\neq 0$ or for $k_r\leq j \leq k_1$ when $l_r= 0$. And 
\[
\dim \left( \overline{\left(\Phi^\vee({}^*W)\right)_{\geq j}}+\overline{({}^*W_{\geq j})_{=j}} \right)-\dim {}^*W_{\geq j}=0
\]
for other cases.

For any $j$ such that $k_1\geq k_{i}>j>k_{i+1}> k_r+1$ or $k_{r-1}>j >k_r+1$ we see that 
\begin{align*}
        \overline{\left(\Phi^\vee({}^*W)\right)_{\geq j}}&=\Phi^\vee(Q_1^\vee)_{\blacktriangleright}+\Phi^\vee({}^*W)_{2}^{i-1}+\Phi^\vee(Q^{k_i})^\vee_0 + \left\langle \Phi^\vee(v_i^{+}) \right\rangle \\
        \overline{({}^*W_{\geq j})_{=j}}&=0 \\
        {}^*W_{\geq j} &=(Q_1^\vee)_{\blacktriangleright}+({}^*W)_{2}^{i-1}+(Q^{k_i})^\vee_0.
\end{align*}
Thus the condition is satisfied.

If $j=k_i$ with $k_1>k_i>k_r+1$, then 
\begin{align*}
    \overline{\left(\Phi^\vee({}^*W)\right)_{\geq j}}&=\Phi^\vee(Q_1^\vee)_{\blacktriangleright}+\Phi^\vee({}^*W)_{2}^{i-1}+\Phi^\vee(Q^{k_i})^\vee_0 + \left\langle \Phi^\vee(v_i^{+}) \right\rangle \\
    \overline{({}^*W_{\geq j})_{=j}}&=\left\langle v_i^{-} \right\rangle+(Q^{k_i})^\vee_0 \\
    {}^*W_{\geq j}&=(Q_1^\vee)_{\blacktriangleright}+({}^*W)_{2}^{i-1}+(Q^{k_i})^\vee_0.
\end{align*}
By Lemma \ref{even maximal isotropic}, we see that $\left\langle v_i^{-} \right\rangle+(Q^{k_i})^\vee_0=\Phi^\vee(Q^{k_i})^\vee_0 + \left\langle \Phi^\vee(v_i^{+}) \right\rangle$ hence the condition is satisfied.

When $j=k_1$, we see that 
\begin{align*}
    \overline{\left(\Phi^\vee({}^*W)\right)_{\geq j}}&=\Phi^\vee(Q_1^\vee)_{\blacktriangleright}+\left \langle \Phi^\vee(v_1^{+}) \right \rangle \\
    \overline{({}^*W_{\geq j})_{=j}}&=(Q_1^\vee)_{\blacktriangleright}\cap (Q^{k_1})^\vee \\
    {}^*W_{\geq j}&=(Q_1^\vee)_{\blacktriangleright}.
\end{align*}
So we need $(Q_1^\vee)_{\blacktriangleright}\cap (Q^{k_1})^\vee\subset \Phi^\vee(Q_1^\vee)_{\blacktriangleright}+\left \langle \Phi^\vee(v_1^{+}) \right \rangle$. When $l_1=0$, this hold by Lemma \ref{odd maximal isotropic}. When $l_1\neq 0$, by (d) of Lemma \ref{Q div}, we have
\[
(Q_1^\vee)_{\blacktriangleright}\cap (Q^{k_1})^\vee\subset ((Q_1^\vee)_{div}+(Q_1^\vee)_{>, ndiv})\cap (Q^{k_1})^\vee\subset \Phi^\vee((Q_1^\vee)_{>, ndiv}) \subset \Phi^\vee(Q_1^\vee)_{\blacktriangleright}+\left \langle \Phi^\vee(v_1^{+}) \right \rangle.
\]

When $j=k_1+1$, we see that 
\begin{align*}
    \overline{\left(\Phi^\vee({}^*W)\right)_{\geq j}}&=\overline{\left(\Phi^\vee(Q_1^\vee)_{\blacktriangleright}+\left \langle \Phi^\vee(v_1)\right \rangle\right)_{=k_1+1}}  \\
    \overline{({}^*W_{\geq j})_{=j}}&=(Q_1^\vee)_{\blacktriangleright}\cap (Q^{k_1+1})^\vee \\
    {}^*W_{\geq j}&=(Q_1^\vee)_{\blacktriangleright}\cap (Q^{k_1+1})^\vee.
\end{align*}
When $l_1=0$, then $(Q^{k_1+1})^\vee=0$ and hence all these three spaces are $0$ so the condition is satisfied. When $l_1\neq 0$, by (e) of Lemma \ref{Q div}, this case follows.

When $j=k_r+1$ and $l_r\neq 0$, then by Lemma \ref{choice of isotropic subspace of Q div}, we see that $v_r^-\notin (Q^{k_r+1})^\vee$, hence
\begin{align*}
    \overline{\left(\Phi^\vee({}^*W)\right)_{\geq j}}&= \Phi^\vee \left((Q_1^\vee)_{\blacktriangleright}+({}^*W)_{2}^{r-2}\right)+\Phi^\vee((Q^{k_{r-1}})^\vee_0)+\overline{(\left\langle\Phi^\vee(v_{r-1}^+\pm \sqrt{-1}v_r^-) \right\rangle)_{=k+1}}+\overline{\left(\Phi^\vee(Q_r^\vee)_{\blacktriangleright}\right)_{=k_1+1}}  \\
    \overline{({}^*W_{\geq j})_{=j}}&=(Q_r^\vee)_{\blacktriangleright}\cap (Q^{k_r+1})^\vee \\
    {}^*W_{\geq j}&= (Q_1^\vee)_{\blacktriangleright}+({}^*W)_{2}^{r-2}+(Q^{k_{r-1}})^\vee_0+(Q_r^\vee)_{\blacktriangleright}\cap (Q^{k_r+1})^\vee.
\end{align*}

Now by (e) of Lemma \ref{Q div}, we see that $\overline{\left(\Phi^\vee(Q_r^\vee)_{\blacktriangleright}\right)_{=k_1+1}}\subset (Q_r^\vee)_{\blacktriangleright}\cap (Q^{k_r+1})^\vee$, then 
\[
\overline{\left(\Phi^\vee({}^*W)\right)_{\geq j}}+\overline{({}^*W_{\geq j})_{=j}}=\Phi^\vee \left((Q_1^\vee)_{\blacktriangleright}+({}^*W)_{2}^{r-1}\right) +(Q_r^\vee)_{\blacktriangleright}\cap (Q^{k_r+1})^\vee
\]
hence the condition is satisfied.

When $j=k_r$ and $l_r\neq 0$, then 
\begin{align*}
    \overline{\left(\Phi^\vee({}^*W)\right)_{\geq j}}&= \Phi^\vee({}^*W) \\
    \overline{({}^*W_{\geq j})_{=j}}&=\overline{(\langle v_r^- \rangle+(Q_r^\vee)_{\blacktriangleright})_{=k_r}}\\
    {}^*W_{\geq j}&= {}^*W.
\end{align*}
So this case follows from (d) of Lemma \ref{Q div}.

When $j=k_r+1$ and $l_r=0$, then $(Q^{k_r+1})^\vee=0$, hence
\begin{align*}
    \overline{\left(\Phi^\vee({}^*W)\right)_{\geq j}}&= \Phi^\vee \left((Q_1^\vee)_{\blacktriangleright}+({}^*W)_{2}^{r-2}\right)+\Phi^\vee((Q^{k_{r-1}})^\vee_0)+\left\langle\Phi^\vee(v_{r-1}^+) \right\rangle  \\
    \overline{({}^*W_{\geq j})_{=j}}&=0 \\
    {}^*W_{\geq j}&= (Q_1^\vee)_{\blacktriangleright}+({}^*W)_{2}^{r-2}+(Q^{k_{r-1}})^\vee_0.
\end{align*}
So the condition is satisfied.

When $j=k_r$ and $l_r=0$, then $(Q^{k_r+1})^\vee=0$ and 
\begin{align*}
    \overline{\left(\Phi^\vee({}^*W)\right)_{\geq j}}&= \Phi^\vee({}^*W) \\
    \overline{({}^*W_{\geq j})_{=j}}&=\left \langle v_r^- \right \rangle+(Q_r^\vee)_{\blacktriangleright}\\
    {}^*W_{\geq j}&= {}^*W.
\end{align*}
Recall that in this case, $v_r^-=v^-$, $(Q_r^\vee)_{\blacktriangleright}=(Q_r^\vee)_{0}$ as in the Lemma \ref{odd maximal isotropic}, so by Lemma \ref{odd maximal isotropic}, we see that 
\[
\dim \left(\Phi^\vee({}^*W)+\left \langle v_r^- \right \rangle+(Q_r^\vee)_{\blacktriangleright}\right)-\dim {}^*W=1.
\]

For other choices of $j$, the conditions are satisfied obviously.
\end{proof}

\bibliographystyle{alpha}
\bibliography{ref}
\end{document}